\theoremstyle{plain}
\newtheorem{theorem}{Theorem}[section]
\newtheorem{lemma}[theorem]{Lemma}
\newtheorem{proposition}[theorem]{Proposition}
\newtheorem{corollary}[theorem]{Corollary}
\theoremstyle{definition}
\newtheorem{definition}[theorem]{Definition}
\newtheorem{example}[theorem]{Example}
\theoremstyle{remark}
\newtheorem{remark}{Remark}[section]
\newcommand{\Z}{\mathbb{Z}}
\newcommand{\R}{\mathbb{R}}
\newcommand{\C}{\mathbb{C}}
\renewcommand{\i}{\mathrm{i}}
\newcommand{\g}{\mathfrak{g}}
\renewcommand{\t}{\mathfrak{t}}
\newcommand{\Aut}{\mathrm{Aut}}
\newcommand{\End}{\mathrm{End}}
\newcommand{\Hom}{\mathrm{Hom}}
\newcommand{\Spin}{\mathrm{Spin}}
\newcommand{\Spinc}{\mathrm{Spin\textsuperscript{c}}}
\newcommand{\Ad}{\mathrm{Ad}}
\newcommand{\vol}{\mathrm{vol}}
\renewcommand{\H}{\mathcal{H}}
\renewcommand{\L}{\mathcal{L}}
\newcommand{\M}{\mathcal{M}}
\renewcommand{\O}{\mathcal{O}}
\newcommand{\X}{\mathfrak{X}}
\renewcommand{\subset}{\subseteq}
\renewcommand{\d}{\mathrm{d}}
\title{Quantization of Polysymplectic Manifolds}
\author{Casey Blacker}
\date{}
\begin{document}\maketitle

\begin{abstract}
	We adapt the framework of geometric quantization to the polysymplectic setting. Considering prequantization as the extension of symmetries from an underlying polysymplectic manifold to the space of sections of a Hermitian vector bundle, a natural definition of prequantum vector bundle is obtained which incorporates in an essential way the action of the space of coefficients. We define quantization with respect to a polarization and to a spin\textsuperscript{c} structure. In the presence of a complex polarization, it is shown that the polysymplectic Guillemin-Sternberg conjecture is false. We conclude with potential extensions and applications.
\end{abstract}

\let\thefootnote\relax
\footnote{\hspace{-10pt}\textit{Date.} July 23, 2019}
\footnote{\hspace{-10pt}\textit{2010 Mathematics Subject Classification.} 53D05, 53D50, 53D20, 53C27.}
\footnote{\hspace{-10pt}\textit{Key words and phrases.} Polysymplectic manifolds, geometric quantization, moment maps, Dirac operators.}
\footnote{\hspace{-10pt}\textit{E-mail}. \texttt{cblacker@math.ecnu.edu.cn}}

\tableofcontents


\section{Introduction}

Geometric quantization encompasses a range of techniques which, broadly speaking, take a symplectic manifold and return a complex Hilbert space. While these methods were inspired by the relation between classical and quantum physics, they are by no means constrained by this interpretation and enjoy a range of applications from representation theory \cite{Kirillov04} to computational geometry \cite{Vergne96b}. A polysymplectic manifold is a smooth manifold equipped with a closed and nondegenerate $2$-form taking values in a fixed vector space. The aim of this paper is to adapt the methods of geometric quantization to the setting of polysymplectic manifolds.

The original aim of polysymplectic geometry was to furnish a general mathematical framework for relativistic field theory \cite{Gunther87,BinzSniatyckiFischer88,CantrijnIbortLeon99}. In this setting, the traditional Hamiltonian formalism exhibits two defects. First, it requires a continuum degrees of freedom; second, it enforces an artificial distinction between space and time. In response to these complication, the physical \emph{de Donder-Weyl formalism} \cite{Donder35,Weyl35} was developed, and \emph{multisymplectic geometry} arose as the corresponding mathematical framework. See \cite[Subsection 1.8]{Helein12} for a historical discussion. Motivated by the same considerations, G\"unther \cite{Gunther87,Gunther87a} later introduced the polysymplectic formalism as an alternative to the multisymplectic approach. Accommodating a wide degree of variation, both approaches remain current in their application to relativistic field theory today.

Beyond physics, the idea of a vector-valued symplectic structure, or its equivalent, has arisen independently on multiple occasions in the literature. Most prominent is the \emph{$k$-symplectic formalism} of Awane \cite{Awane92}, according to which a $k$-symplectic structure on a smooth manifold $M$, chosen so that $k+1$ divides $\dim M$, is defined to be a collection of $k$ presymplectic structures $(\omega_1,\ldots,\omega_k)$, which are collectively nondegenerate in the sense that $\cap_i \ker\omega_i$ vanishes as a distribution on $TM$. Similar ideas have appeared in the theories of \emph{$k$-almost cotangent structures} \cite{LeonMendezSalgado93}, \emph{generalized symplectic geometry} \cite{Norris93} and \emph{$n$-symplectic geometry} \cite{Norris01}, and have found applications in, for example, Lie group thermodynamics \cite{Barbaresco17}.

While the quantization of polysymplectic field theories has been the subject of extensive research, see for example \cite{Kanatchikov04,Kanatchikov98a,Kanatchikov01,Kanatchikov15,Bashkirov04,Sardanashvily02,BashkirovSardanashvily04}, the problem of quantizing general polysymplectic manifolds has received little attention. In this regard, Awane and Goze \cite{AwaneGoze00} have introduced a notion of $k$-symplectic geometric prequantization. We compare their construction with our own in Section \ref{sec:prequantization}.

We now outline the contents of this paper.

In Section \ref{sec:quantization_review} we review the basic constructions of geometric quantization in such a way as to clarify our later approach in the polysymplectic context. Specifically, we consider geometric quantization as a means of extending the symmetries of a symplectic manifold $(M,\omega)$ to the space of sections a Hermitian line bundle $L\to M$, subject to a constraint that we identify with the Heisenberg uncertainty principle.

In Section \ref{sec:polysymplectic_review} we review polysymplectic geometry from the $V$-symplectic standpoint. We introduce the technical condition of \emph{transitivity}, later to play an essential role in establishing the connection between prequantizations and prequantum vector bundles. We also introduce the notion of a \emph{local polysymplectic structure}, that is, a polysymplectic structure with local coefficients, and compare it with the more familiar \emph{multisymplectic structure}.

Section \ref{sec:prequantization} is the heart of the paper. Working by analogy with the symplectic context, we define a \emph{prequantization} to be an admissible extension of the Hamiltonian dynamics on an underlying $V$-symplectic manifold $(M,\omega)$ to a Hermitian vector bundle $E\to M$. We note that this approach is similar to the original perspective of \cite{Souriau70}. In the case that $(M,\omega)$ is transitive and connected, there is a natural equivalence between this construction and that of a prequantum vector bundle, defined below.

\setcounter{section}{4}
\setcounter{theorem}{5}
\begin{definition}
	A \emph{prequantum vector bundle} on $(M,\omega)$ consists of a faithful Hermitian $V$-module bundle $E\to M$ with a $V$-linear unitary connection $\nabla$ satisfying $F^\nabla = -\omega$.
\end{definition}

A comparison of the symplectic and polysymplectic situations is given as follows. We make use of the fact that the space of coefficients $V$ inherits the structure of an abelian Lie algebra when identified with the space of constant Hamiltonian functions on $(M,\omega)$.

\begin{center}
\begin{tabular}{ll}
	symplectic											&$V$-symplectic												\\ \hline
														&															\\ [-.35cm]
	prequantum line bundle $(L,\nabla)$					&prequantum vector bundle $(E,\nabla,A)$					\\
	scalar multiplication $m$							&effective unitary representation $A:V\to\End\,E$			\\
	Planck's constant $\hbar>0$							&weights of $A$												\\
	prequantum operator $Q_f=\nabla_{X_f}\hspace{-2pt}+\i\hbar\hspace{1pt}m_f$
														&$Q_f=\nabla_{X_f} + A_f$									\\[1pt]
	curvature condition $F^\nabla = -\i\hbar\hspace{1pt}\hspace{1pt}\omega$
														&$F^\nabla = -A_\omega$
\end{tabular}
\end{center}

We obtain a very strong result on the global structure of prequantum vector bundles.

\setcounter{section}{4}
\setcounter{theorem}{7}
\begin{theorem}
	Every prequantum vector bundle $(E,\nabla,A)$ splits as the sum of weight bundles $\oplus_{\lambda\in w(A)}(E_\lambda,\nabla|_{E_\lambda},\lambda)$.
\end{theorem}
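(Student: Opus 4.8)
The plan is to realize the claimed decomposition as the simultaneous eigenbundle decomposition of the commuting family of endomorphisms $\{A_v\}_{v\in V}$, and then to show that the defining properties of a prequantum vector bundle force this decomposition to be by smooth, $\nabla$-invariant, mutually orthogonal subbundles.

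First I would unpack the hypotheses at the level of the endomorphism field $A_v$ attached to each $v\in V$. Since $V$ is abelian and $A$ is a representation, the $A_v$ commute pairwise; since $A$ is unitary, each $A_v$ is skew-Hermitian. The crucial observation is that $V$-linearity of $\nabla$ is equivalent to the statement that every $A_v$ is $\nabla$-parallel: writing the $V$-action as $v\cdot s = A_v s$ and expanding $\nabla(A_v s) = A_v \nabla s$ yields $(\nabla A_v)s = 0$ for all sections $s$, hence $\nabla A_v = 0$, where $\nabla$ here denotes the induced connection on $\End E$.

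Next, fiberwise linear algebra gives the pointwise decomposition. At each $x\in M$ the commuting skew-Hermitian operators $\{A_v(x)\}$ are simultaneously diagonalizable, so $E_x$ splits as an orthogonal sum of joint eigenspaces indexed by weights $\lambda\in\Hom(V,\i\R)$, with $(E_x)_\lambda = \{\psi : A_v(x)\psi = \lambda(v)\psi \ \forall v\}$. To promote this to a bundle decomposition I would use parallel transport. Because each $A_v$ is parallel, parallel transport $P_\gamma$ along any path from $x_0$ to $x$ intertwines the actions, $A_v(x) = P_\gamma A_v(x_0) P_\gamma^{-1}$; hence $A_v(x)$ is conjugate to $A_v(x_0)$ and $P_\gamma$ carries $(E_{x_0})_\lambda$ isomorphically onto $(E_x)_\lambda$. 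On a connected $M$ this shows that the weight set $w(A)$ is a fixed finite set, that each weight space has constant rank, and that each $E_\lambda$ assembled from the $(E_x)_\lambda$ is a $\nabla$-parallel, in particular smooth, subbundle of $E$.

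Finally I would assemble the conclusion. Parallel-invariance of $E_\lambda$ means $\nabla$ restricts to a connection $\nabla|_{E_\lambda}$; orthogonality of eigenspaces for distinct weights makes $E = \oplus_{\lambda\in w(A)} E_\lambda$ an orthogonal direct sum respecting the Hermitian structure, and the restricted connections are again unitary. On $E_\lambda$ the representation acts by the scalar weight, $A_v|_{E_\lambda} = \lambda(v)\,\mathrm{id}$, so $A_\omega|_{E_\lambda} = \lambda(\omega)\,\mathrm{id}$ and the curvature condition descends to $F^{\nabla|_{E_\lambda}} = -\lambda(\omega)$, so that $(E_\lambda,\nabla|_{E_\lambda},\lambda)$ is precisely the weight bundle of weight $\lambda$. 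The main obstacle I anticipate is the passage from the pointwise to the bundle statements, that is, ensuring the eigenspaces vary smoothly and have constant rank even where eigenvalues collide; the parallel-transport argument is exactly what I would use to bypass any direct smoothness-of-eigenprojection analysis, since a holonomy-invariant subbundle is automatically smooth.
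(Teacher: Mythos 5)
Your proposal is correct and follows essentially the same route as the paper's proof: both rest on the parallelism of each $A_v$ (equivalent to the $V$-linearity of $\nabla$), the pointwise weight-space decomposition of the unitary abelian action, and parallel transport intertwining the action so that weight spaces are carried to weight spaces, yielding constant weights and smooth $\nabla$-invariant subbundles $E_\lambda$ over a connected base. The extra details you supply (orthogonality of the splitting and the restricted curvature identity $F^{\nabla|_{E_\lambda}}=-\omega_\lambda$) are consistent with, and implicit in, the paper's argument.
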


Given the minimal nature of our initial construction of a prequantization, it is notable that we obtain such a strong result. A slightly weaker version holds in the local polysymplectic setting.

\setcounter{section}{4}
\setcounter{theorem}{22}
\begin{theorem}
	If $(M,\omega)$ is a transitive and connected local $V$-symplectic manifold with prequantum vector bundle $(E,\nabla,A)$, then the action of the holonomy group $\mathrm{Hol}_x^\nabla$ preserves the weight-space decomposition $E_x=\oplus_{\lambda\in w(A_x)} (E_x)_\lambda$ up to permutation of the factors.
\end{theorem}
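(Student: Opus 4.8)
The plan is to reduce the statement to the observation that the $V$-module action $A$ is a \emph{parallel} section of the appropriate homomorphism bundle, so that parallel transport intertwines the fiberwise representations up to the holonomy of the coefficient connection. First I would make explicit the connection $\nabla^V$ on the coefficient bundle $V\to M$ carried by a local $V$-symplectic structure, and observe that the $V$-linearity of $\nabla$ is precisely the condition that the module action $A\colon V\otimes E\to E$ be parallel with respect to the connection induced by $\nabla^V$ and $\nabla$. The standard fact that a parallel bundle morphism intertwines parallel transport then yields, for every loop $\gamma$ based at $x$ with parallel transports $P_\gamma\in\mathrm{Hol}_x^\nabla$ on $E$ and $P_\gamma^V$ on $V$, the relation
\begin{equation*}
	P_\gamma\circ A_x(v)\circ P_\gamma^{-1} = A_x\bigl(P_\gamma^V v\bigr), \qquad v\in V_x.
\end{equation*}

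Next I would unwind what this does to the weight-space decomposition. Since $V_x$ is abelian and $A_x$ is a unitary representation, $\{A_x(v)\}_{v\in V_x}$ is a commuting family of skew-Hermitian operators, giving the orthogonal decomposition $E_x=\oplus_{\lambda\in w(A_x)}(E_x)_\lambda$ with $A_x(v)\xi=\i\lambda(v)\xi$ for $\xi\in(E_x)_\lambda$. Taking $\xi\in(E_x)_\lambda$ and applying the intertwining relation gives
\begin{equation*}
	A_x(v)\,P_\gamma\xi = P_\gamma\,A_x\bigl((P_\gamma^V)^{-1}v\bigr)\xi = \i\,\lambda\bigl((P_\gamma^V)^{-1}v\bigr)\,P_\gamma\xi,
\end{equation*}
so $P_\gamma\xi$ is again a weight vector, with weight $\lambda'=\lambda\circ(P_\gamma^V)^{-1}$, the image of $\lambda$ under the contragredient action of $P_\gamma^V$ on $V_x^*$. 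As $P_\gamma$ is invertible it carries $(E_x)_\lambda$ isomorphically onto $(E_x)_{\lambda'}$; in particular $\lambda'\in w(A_x)$, and since $\lambda\mapsto\lambda'$ is the restriction of a linear automorphism of $V_x^*$, it is an injective, hence bijective, self-map of the finite set $w(A_x)$. Therefore $P_\gamma$ preserves the weight decomposition up to this permutation, which is the assertion.

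I expect the only real obstacle to be a careful justification of the intertwining relation in the local setting, where it hinges on correctly identifying the local analogue of $V$-linearity with the parallelism of $A$ with respect to $\nabla^V$ and $\nabla$; the global splitting theorem above corresponds to the special case in which $V$ is the trivial bundle with its flat connection, so that $P_\gamma^V=\mathrm{id}$, the permutation is trivial, and the weight bundles are globally preserved. Since $\nabla$ is unitary, $P_\gamma$ is unitary and hence carries the orthogonal weight decomposition to an orthogonal one, so no compatibility with the Hermitian structure is lost; the remaining verifications, that the module action is genuinely parallel and that the induced map on $w(A_x)$ is well defined independently of the representative loop within a homotopy class of the holonomy element, are routine once the displayed relation is established.
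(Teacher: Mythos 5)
Your proposal is correct and follows essentially the same route as the paper: both arguments rest on the parallelism of $A\in\Hom(\tilde V,\End E)$, which yields exactly your intertwining relation (the paper writes it as $A_v(\tau_\gamma\sigma)=\tau_\gamma(A_{\tau_\gamma^{-1}v}\sigma)=\lambda_{\tau_\gamma^{-1}v}(\tau_\gamma\sigma)$), and both conclude that parallel transport around a loop sends $\lambda$-weight vectors to $(\tau_\gamma^{-1})^*\lambda$-weight vectors, hence permutes the weight spaces. Your additional observations (bijectivity of the induced map on the finite weight set, recovery of the global splitting theorem when the coefficient holonomy is trivial) are accurate refinements of the same argument.
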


We define a \emph{prequantum lattice} associated to $(M,\omega)$ to be a full sublattice $I\subset V$ such that $\omega$ lies in the image of $H^2(M,I)\hookrightarrow H^2(M,V)$. In terms of this construction, we obtain the following result on the existence and classification of prequantum vector bundles.

\setcounter{section}{4}
\setcounter{theorem}{12}
\begin{theorem}
	If $(M,\omega)$ is transitive and connected, then there is a bijection between equivalence classes of minimal rank prequantizations $(E,\nabla,A)$ on $(M,\omega)$ and bases $\mathcal{B}=\frac{1}{2\pi\i}w(A)$ of prequantum lattices $I\subset V$.
\end{theorem}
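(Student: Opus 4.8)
The plan is to reduce the classification to a problem about line bundles using the splitting of Theorem 4.8, and then to extract the integrality constraints forced by the curvature condition $F^\nabla = -A_\omega$. Given a minimal rank prequantization $(E,\nabla,A)$, Theorem 4.8 yields $E = \oplus_{\lambda\in w(A)}(E_\lambda,\nabla|_{E_\lambda},\lambda)$, where $A_v$ acts on $E_\lambda$ as the scalar $\lambda(v)$ and each $\lambda$ lies in $\Hom(V,\i\R)$. Minimality of $\mathrm{rank}\,E = \sum_\lambda \dim E_\lambda$ forces every weight space to be a line and the weights to be distinct, while effectiveness of $A$ means $\cap_\lambda\ker\lambda=\ker A=0$, so the weights span $V^*$. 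Together these say precisely that $\mathcal B=\frac{1}{2\pi\i}w(A)$ is a basis of $V^*$. The forward map sends the equivalence class of $(E,\nabla,A)$ to $\mathcal B$, and it is well defined since equivalent prequantizations carry the same representation $A$, hence the same weights.

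Next I would impose the curvature condition weight by weight. On $E_\lambda$ the identity $F^\nabla=-A_\omega$ reads $F^{\nabla|_{E_\lambda}} = -\lambda(\omega) = -2\pi\i\,\beta(\omega)$, where $\beta=\frac{1}{2\pi\i}\lambda\in V^*$ and $\beta(\omega)$ is the ordinary real $2$-form obtained by applying $\beta$ to the $V$-valued form $\omega$; note $\frac{\i}{2\pi}[F^{\nabla|_{E_\lambda}}]=[\beta(\omega)]$. By the standard Weil--Kostant integrality theorem a Hermitian line bundle with unitary connection of this curvature exists if and only if $[\beta(\omega)]$ is integral, i.e. $\beta$ carries every period of $\omega$ into $\Z$. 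Writing $P\subset V$ for the image of the period map $H_2(M,\Z)\to V$ determined by $[\omega]\in H^2(M,V)$, this is the condition $\beta(P)\subset\Z$ for each $\beta\in\mathcal B$. By the universal coefficient theorem the requirement that $[\omega]$ lie in the image of $H^2(M,I)\hookrightarrow H^2(M,V)$ is equivalent to $P\subset I$; hence demanding that the basis $\mathcal B$ be integral on $P$ is exactly the statement that $\mathcal B$ generates a full lattice dual to a prequantum lattice $I\subset V$. This gives surjectivity: from any such basis one builds the weight line bundles $E_\lambda$ by Weil--Kostant, forms $E=\oplus_\lambda E_\lambda$ with $A$ acting through the weights and $\nabla=\oplus_\lambda\nabla|_{E_\lambda}$, and recovers a minimal rank prequantization mapping to $\mathcal B$.

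It remains to prove injectivity, and this is where I expect the main obstacle to lie. Two minimal rank prequantizations with the same weight set have, weight by weight, line bundles of equal curvature; but prequantum line bundles of fixed curvature form a torsor under the flat Hermitian line bundles $H^1(M,U(1))\cong\Hom(\pi_1(M),U(1))$, which is typically nontrivial. For the forward map to be injective on equivalence classes, the residual flat freedom in each weight factor must be exactly what the equivalence relation forgets; equivalently, I must show that the normalized weight set is a complete invariant. The plan is to invoke transitivity and connectedness—the same hypotheses under which prequantizations are identified with prequantum vector bundles—to argue that an admissible extension of the Hamiltonian dynamics is determined up to equivalence by the representation $A$ alone: connectedness makes the weight labelling globally consistent, while transitivity rigidifies the underlying line factors so that flat twists do not produce inequivalent prequantizations. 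Verifying that this rigidity precisely cancels the $H^1(M,U(1))$ ambiguity, rather than over- or under-counting, is the crux; once it is established the forward map is a bijection onto bases of prequantum lattices, and passing between $\mathcal B$ and the dual basis records the correspondence in the stated form.
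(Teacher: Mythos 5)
Your forward map and your surjectivity argument are essentially the paper's own proof. The paper likewise uses Proposition 4.10 to see that $\frac{1}{2\pi\i}w(A)$ is a basis of $V^*$ for a minimal rank prequantization, uses the splitting of Theorem 4.8 to reduce the curvature condition $F^\nabla=-A_\omega$ to the weight line bundles $E_\lambda$ with $F^{\nabla|_{E_\lambda}}=-\omega_\lambda$, invokes Weil--Kostant integrality to get $\langle w(A),I_\omega\rangle\subset 2\pi\i\,\Z$, i.e.\ $I_\omega\subset I$ for $I$ the lattice generated by the dual basis (which is the prequantum lattice condition by Lemma 4.12), and conversely assembles $\oplus_\lambda(L_\lambda,\nabla^{L_\lambda},\lambda)$ from a basis of a prequantum lattice. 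Up to the $\mathcal{B}$ versus $\mathcal{B}^*$ bookkeeping, which you handle correctly, this matches the paper.

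The gap is injectivity, and you have flagged it yourself without closing it. The paper does not prove injectivity inside this theorem at all: it is supplied by Corollary 4.9, which asserts that two prequantum vector bundles on $(M,\omega)$ are equivalent if and only if they have the same weight multiset. With that corollary in hand, the weight data is a complete invariant, so matching weight sets with lattice bases finishes the bijection; that citation is the ingredient your write-up is missing. Moreover, the substitute mechanism you propose --- that transitivity and connectedness ``rigidify the underlying line factors so that flat twists do not produce inequivalent prequantizations'' --- cannot work as stated. In the case $V=\R$ every connected symplectic manifold is transitive in the paper's sense, and yet prequantum line bundles with a fixed curvature form a torsor under $H^1(M,U(1))\cong\Hom(\pi_1(M),U(1))$: a flat twist changes the holonomy of $\nabla$, hence the isomorphism class of $(L,\nabla)$ under any equivalence covering the identity (take $M=T^2$ with an integral symplectic form for a concrete example). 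So transitivity by itself cancels nothing; what collapses the torsor in the paper is the equivalence relation encoded in Corollary 4.9, not any dynamical rigidity. Your instinct that this point is delicate is sound --- under the strictest reading of equivalence (identity on the base, intertwining $\nabla$ and $A$) the flat-twist torsor is a genuine obstruction, which is precisely why the proof must pass through Corollary 4.9 --- but as written your argument establishes a bijection between bases of prequantum lattices and the possible weight multisets, not between bases and equivalence classes, which is what the statement claims.
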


In the symplectic setting, the discreteness of the image of the natural pairing $\langle\omega,\cdot\,\rangle:H_2(M,\Z)\to \R$ is the source of the term \emph{quantum} \cite[V.III.A.2]{Dugas88}. In light of this, we may take Theorem \ref{thm:fundamental_theorem_of_prequantum_lattices} as further evidence for the aptness of our use of this terminology.

In Section \ref{sec:polarized_quantization} we incorporate \emph{polarizations} into our quantization scheme. As in the symplectic case, a polarization is defined to be an integrable Lagrangian distribution of $T^\C M$. Examples of polysymplectic manifolds quantized with respect to a polarization include the following.

\begin{center}
\begin{tabular}{llll}
	classical states&prequantum states				&polarization							&quantum states		\\ \hline
					&								&										&					\\ [-.35cm]
	$G$				&$\X^\C(G)$						&translates of max.\ torus $T\subset G$	&$\X^\C(G)_T$		\\ [.04cm]
	$\Hom(TQ,V)$	&$C^\infty\big(\Hom(TQ,V),V^\C\big)$	&vertical foliation of $\Hom(TQ,V)$		&$C^\infty(Q,V^\C)$
\end{tabular}
\end{center}
Here $G$ is a compact semisimple Lie group and $Q$ is a smooth manifold, as reviewed in Section \ref{sec:polysymplectic_review}. By $\X^\C(G)_T$ we denote the space of $T$-invariant complex vector fields on $G$. The main result of this section is that the Guillemin-Sternberg conjecture does not obtain in the polysymplectic context.

\setcounter{section}{5}
\setcounter{theorem}{15}
\begin{theorem}
	Let $(E,\nabla,A)$ be a positive definite prequantum vector bundle on $(M,\omega,J,G,\mu)$ and suppose that $(M_0,\omega_0)$ is nonempty and $V$-symplectic, and inherits a complex structure $J_0$ and prequantum vector bundle $E_0$. It is not generally the case that $\H_J(M)_G\cong\H_{J_0}(M_0)$.
\end{theorem}

In Section \ref{sec:spinc_quantization} we observe that the formalism of spin\textsuperscript{c} quantization admits a natural extension to the local polysymplectic setting. In particular, we define the spin\textsuperscript{c} quantization of a polysymplectic manifold, with respect to a given prequantum vector bundle, to be the index space of an associated spin\textsuperscript{c} Dirac operator.

We conclude in Section \ref{sec:outlook} with a discussion of potential interactions with Chern-Simons theory, multisymplectic quantization, and quantum field theory.

\section*{Acknowledgements}
The author would like to thank his advisor, Xianzhe Dai, and to acknowledge the support of the East China Normal University and the China Postdoctoral Science Foundation.

\setcounter{section}{1}
\setcounter{theorem}{0}


\section{Review of Geometric Quantization}\label{sec:quantization_review}

We first review geometric quantization in the symplectic setting. In this section, we consider quantization with respect to a polarization; spin\textsuperscript{c} quantization is reserved for Section \ref{sec:spinc_quantization}. This formalism is attributed by Weinstein \cite{Weinstein81} to Souriau \cite{Souriau70} and Kostant \cite{Kostant73}. We refer to \cite{EcheverriaEnriquezMunozLecanda98,Woodhouse92,Lerman12} for modern treatments, and to \cite{BatesWeinstein97} for a discussion of alternative approaches.


\subsection{Prequantization}

Symplectic geometry arose as a framework for classical mechanics. In modern language, the possible states of a given mechanical system form the points of a \emph{configuration manifold} $Q$. The space $TQ$ of infinitesimal state transitions is called the \emph{(generalized) velocity phase space}. Conceptually, the momentum associated to a velocity $X\in T_qQ$ is a measure of the sensitivity of the kinetic energy $k:TQ\to\R$ to variations in $X$. Formally, we identify the associated momentum with the linear map $Y\mapsto\frac{\d}{\d t}\,k(X+tY)\,|_{t=0}$ on $T_qQ$. This establishes a bundle morphism $TQ\to T^*Q$, the \emph{Legendre transformation} associated to $k$, which identifies $T^*Q$ as the \emph{(generalized) momentum phase space} of the underlying mechanical system. It is often convenient to work with the cotangent bundle $T^*Q$ since it possesses a canonical symplectic structure in terms of which the mechanical symmetries of the underlying system are easily expressed. See \cite{Marsden92} for a review of classical mechanics from this perspective.

An application of Darboux's theorem thus provides that every symplectic manifold $(M,\omega)$ is locally equivalent to the momentum phase space of a mechanical system. The unattainable goal of \emph{geometric quantization} is to assign to $(M,\omega)$ a unique corresponding \emph{space of quantum states} $\mathcal{H}$, in such a manner that encodes the relation between physical classical systems and the quantum systems from which they arise. In particular, we aim to adapt $(M,\omega)$ to incorporate what we may heuristically associate with the following three physical properties:

\begin{enumerate}[1.]
	\item \textbf{Superposition}: Quantum states may be \emph{superposed}, that is, combined additively.

	\item \textbf{Phase}: Quantum states exhibit the presence of an unphysical periodic degree of freedom, known as \emph{phase}, associated to the states of the corresponding classical system. Under superposition, quantum states of identical phase combine constructively, while those of opposite phase combine destructively.
	
	\item \textbf{Uncertainty}: Quantum states may not be simultaneously characterized in terms of a pair of conjugate coordinates. If $(x_i,y_i)_{i\leq n}$ is a system of symplectic coordinates on the classical phase space, and if we associate to the quantum state $\psi$ a definite value of $x_i$, then we may not associate to $\psi$ \emph{any} value of the conjugate coordinate $y_i$.
\end{enumerate}

With this in mind, we define $\mathcal{H}$ to be the space of smooth sections of a Hermitian line bundle $L\to M$. Note that we do not impose the usual $L^2$ condition on the members of $\mathcal{H}$. Superposition of quantum states corresponds to addition of sections, and pointwise phase differences between quantum states are naturally described by means of the angular coordinate on the complex fibers of $L$.

In \emph{prequantization}, we postpone considerations of uncertainty. Inspired by physical systems, we associate to each classical observable $f\in C^\infty(M)$ an infinitesimal symmetry $Q_f\in\End\,\mathcal{H}$ extending the classical symmetry $X_f\in\X(M)$ in such a way that the action of $f$ on the phase of $\psi\in\mathcal{H}$ is proportional at each point to the value of $f$ by a fixed constant $\hbar>0$. That is, we define $Q_f = \nabla_{X_f}+\i\hbar\hspace{.5pt}f$, where $\nabla$ is a connection on $L$. See the beginning of Subsection \ref{subsec:construction_of_prequantizations} for a discussion of our use of the term \emph{extension}. We remark that our definition of $Q_f$ differs from the usual conventions of the physics literature by a factor of $\i\hbar$. Adopting the requirement that $Q:C^\infty(M)\to \End\,\H$ be a map of Lie algebras, a necessary and sufficient condition for the existence of $(L,\nabla)\to M$, and hence of $Q$, is that $F^\nabla = -\i\hbar\hspace{1pt}\omega$.

Observe that, while we have described an assignment $Q$ of quantum \emph{symmetries}, we have not proposed any identification between classical and quantum \emph{states}. Indeed, any such effort must take care to not violate uncertainty principle, and leads to the consideration of \emph{coherent states}. However, in light of the uncertainty principle, it is the Lagrangian submanifolds of $(M,\omega)$, known in this context as \emph{semiclassical states}, which are more readily associated with the elements of $\H$. These considerations, however, will not play a role in this paper.


\subsection{Polarized Quantization}

To incorporate the uncertainty principle, we consider only those quantum states $\psi\in\mathcal{H}$ which are parallel along a \emph{polarization} $\mathcal{\mathcal{P}}$, that is, an integrable Lagrangian distribution of the complexified tangent bundle $T^\C M$. Such states $\psi\in\mathcal{H}_{\mathcal{P}}$ are said to be \emph{polarized}. We note that while the uncertainty principle applies to \emph{every} system of symplectic coordinates $(x_i,y_i)_{i\leq n}$, our choice of polarization $\mathcal{\mathcal{P}}$ is fixed. As a partial solution, we note that in various situations it is possible to canonically identify $\H_{\mathcal{P}}$ and $\H_{\mathcal{P}'}$ for distinct $\mathcal{P}$ and $\mathcal{P}'$ \cite{Woodhouse92}.

There are multiple ways to address the fact that the symmetries $Q_f$ do not generally preserve $\mathcal{H}_{\mathcal{P}}$. We will take the mathematically convenient approach and simply restrict the representation $Q:C^\infty(M)\to\End\,\mathcal{H}$ to the subalgebra of classical observables $C^\infty(M)_{\mathcal{P}}$ which do preserve $\mathcal{H}_{\mathcal{P}}$. The \emph{quantization} of $(M,\omega)$ with respect to $\mathcal{\mathcal{P}}$ is defined to be the restricted representation $Q:C^\infty(M)_{\mathcal{P}}\to\End\,\mathcal{H}_{\mathcal{P}}$.

If $M$ is a K\"ahler manifold, then the antiholomorphic tangent bundle $T^{0,1}M\subset T^\C M$ forms a polarization on $M$, and the identity $\omega=-\i\hbar\,F^\nabla$ implies that $L\to M$ is a holomorphic line bundle. In this case, the \emph{K\"ahler quantization} of $M$ is defined to be the quantization of $M$ with respect to $\mathcal{P}=T^{0,1}M$. In other words, $\mathcal{H}_{\mathcal{P}}$ is the space of holomorphic sections of $L\to M$. Note that if $M$ is compact, then $\mathcal{H}_{\mathcal{P}}$ is a finite dimensional Hilbert space.


\section{The Polysymplectic Formalism}\label{sec:polysymplectic_review}

This section serves two purposes. First, we provide a review of the fundamental aspects of polysymplectic geometry. Second, we introduce for the first time certain ``classical'' notions which we will utilize in the quantum material to follow.


\subsection{$V$-Hamiltonian Systems}

Let us briefly review the $V$-Hamiltonian formalism. The following conventions, terminology, and notation are broadly consistent with the more comprehensive presentation in our earlier work \cite{Blacker19}.

\begin{definition}
	Let $M$ be a smooth manifold and $V$ a real vector space. For simplicity, we will assume that $V$ is finite-dimensional. A \emph{$V$-symplectic structure} on $M$ is a closed, nondegenerate $2$-form $\omega$ with values in $V$. The \emph{Hamiltonian vector field} associated to a function $f\in C^\infty(M,V)$ is the unique vector $X_f\in\X(M)$ satisfying $d f=-\iota_{X_f}\omega$. In this case, we call $f$ a \emph{Hamiltonian function} associated to $X_f$. We denote the vector space of Hamiltonian functions by $C_H^\infty(M,V)$. Finally, the \emph{bracket} of two observables $f,h\in C_H^\infty(M,V)$ is defined to be $\{f,h\}= \omega(X_f,X_h) = X_f h\in C_H^\infty(M,V)$. We define the \emph{component} of $\omega$ with respect to the dual coefficient $\lambda\in V^*$ to be the real-valued $2$-form $\omega_\lambda = \lambda\circ\omega$.
\end{definition}

Note that our definition of the bracket differs by a factor of $-1$ from our convention in \cite{Blacker19}.

In contrast with the symplectic situation, it is not the case that every $V$-valued function possesses an associated Hamiltonian vector field. Indeed, the condition for $f\in C^\infty(M,V)$ to be a Hamiltonian function is that $\d f\in \Omega^1(M,V)$ lies in the image of the map $\iota\omega:\X(M)\to\Omega^1(M,V)$ given by $X\mapsto \iota_X\omega$.

\begin{example}
	\begin{enumerate}[i.]
		\item If $\theta\in\Omega^1(G,\g)$ is the Cartan $1$-form on the centerless Lie group $G$, then $\omega=-\d\theta$ is a $\g$-symplectic structure on $G$. The local model is $(\g,[\,,])$, where the Lie bracket $[\,,]$ represents a linear $\g$-symplectic structure on $\g$.
		\item Consider a smooth manifold $Q$ and a vector space $V$. The fundamental $1$-form $\theta\in\Omega^1\big(\Hom(TQ,V),V\big)$ on $\pi:\Hom(TQ,V)\to Q$ is given by $\theta(X) = \phi(\pi_*X)$ for $X\in T_\phi\Hom(TQ,V)$, and the canonical $V$-symplectic structure on $\Hom(TQ,V)$ is $\omega=-\d\theta$. The local model consists of the vector space $U\oplus\Hom(U,V)$ and the product $(u+\phi,u'+\phi')\mapsto \phi'(u)-\phi(u')$, where $U$ is the linear space on which $Q$ is modeled.
	\end{enumerate}
\end{example}

\begin{definition}
	Let $(M,\omega)$ be a $V$-symplectic manifold equipped with the action of a Lie group $G$. We will define a \emph{comoment map} for the action of $G$ to be a Lie algebra antihomomorphism $\tilde\mu:\g\to C_H^\infty(M,V)$ such that $\tilde\mu(\xi) = X_{\underline\xi}$, where $\underline\xi\in\X(M)$ is the fundamental vector field of $\xi\in\g$. Explicitly, $\underline\xi_x\mapsto\frac{\d}{\d t}e^{t\xi}x\big|_{t=0}$ for every $x\in M$. The associated \emph{moment map} $\mu:M\to\Hom(\g,V)$, where $\Hom(\g,V)$ denotes the space of linear maps from $\g$ to $V$, is defined by the formula $\mu(x)(\xi) = \tilde\mu(\xi)(x)$, for all $x\in M$ and $\xi\in\g$. The tuple $(M,\omega,G,\mu)$ is called a $V$-\emph{Hamiltonian system}. The \emph{reduction} of $(M,\omega,G,\mu)$ at $0\in\Hom(\g,V)$ is defined to be the pair $(M_0,\omega_0)$ consisting of the set $M_0=\mu^{-1}(0)/G$ and the unique $V$-valued $2$-form $\omega_0$ on the regular part of $M_0$ that satisfies $i^*\omega = \pi^*\omega_0$, where $i:\mu^{-1}(0)\to M$ is the inclusion and $\pi:\mu^{-1}(0)\to M_0$ the projection.
\end{definition}

The polysymplectic reduction theorem, which guarantees the existence and uniqueness of the reduced $2$-form $\omega_0$, was first proved in \cite{MarreroRoman-RoySalgadoVilarino15} and constitutes the polysymplectic analogue of the celebrated symplectic reduction theorem of Marsden and Weinstein \cite{MarsdenWeinstein74}. We refer to \cite{Blacker19} for a statement and proof in the $V$-symplectic setting.

The comoment map $\tilde\mu$ is a lift of the assignment $f\mapsto X_f$ of Hamiltonian vector fields. That is,
\begin{center}
	\begin{tikzpicture}[scale=2.3]
		\node (ul) at (1,.8) {$C^\infty(M)$};
		\node (ll) at (0,0) {$\g$};
		\node (lr) at (1,0) {$\X(M)$};
		
		\path[->] (ul) edge node[right] {$X$} (lr);
		\path[->,dashed] (ll) edge node[above left] {$\tilde\mu$} (ul);
		\path[->] (ll) edge node[below] {$\lambda_*$} (lr);
	\end{tikzpicture}
\end{center}
where $\lambda_*$ denotes the assignment $\xi\mapsto\underline\xi$ of fundamental vector fields. With our conventions, $X$ is a Lie algebra homomorphism, while $\tilde\mu$ and $\lambda_*$ are antihomomorphisms.

\begin{example}
	\begin{enumerate}[i.]
		\item The left regular action of $G$ on $(G,-\d\theta)$ is Hamiltonian with moment map $\Ad:G\to\End(\g)$. In particular, for each $\xi\in\g$, the function $g\mapsto \Ad_g\,\xi$ is Hamiltonian, with associated Hamiltonian vector field the right-invariant vector field $\underline{\xi}\in\X(G)$.
		\item A right action of $G$ on $Q$ induces a Hamiltonian left action of $G$ on $\Hom(TQ,V)$ with moment map given by $\mu(\phi)(\xi)=\phi(\underline\xi_Q)$. If the quotient $Q/G$ is a manifold, then the reduction of $\Hom(TQ,V)$ at $0\in\Hom(\g,V)$ is naturally isomorphic to $\Hom(T(Q/G),V)$ with its canonical symplectic structure.
	\end{enumerate}
\end{example}


\subsection{Dynamics and Invariant Measures}

Let $(M,\omega)$ be a $V$-symplectic manifold.

\begin{definition}
	We say that $(M,\omega)$ is \emph{transitive at $x\in M$} if every tangent vector $X_x\in T_xM$ extends to a Hamiltonian vector field $X\in\X(M)$. We say that $(M,\omega)$ is \emph{transitive} if it is transitive at every point of $M$.
\end{definition}

If $(M,\omega)$ is transitive and connected, then for any two points $x,y\in M$ there is a time dependent Hamiltonian vector field $X_t\in \R\times\X(M)$ with flow $\phi_t$ satisfying $\phi_1(x)=y$. See \cite{RyvkinWurzbacher15} for a similar condition in the multisymplectic setting.

\begin{definition}
	An \emph{invariant measure} on $(M,\omega)$ is a section of the density bundle $\eta\in |\Lambda|(M)$ which is preserved by every Hamiltonian vector field on $M$.
\end{definition}

When $M^n$ is orientable, we often identify $\eta$ with a representative from $\Omega^n(M)$.

\begin{lemma}\label{lem:transitive_uniqueness_of_invariant_measures}
	If $(M^n,\omega)$ is a transitive, connected $V$-symplectic manifold, and if $\eta\in|\Lambda|(M)$ is a nonzero invariant measure, then $\eta$ is nonvanishing and unique up to rescaling.
\end{lemma}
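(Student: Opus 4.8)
The goal is to show two things: that a nonzero invariant measure $\eta$ is nowhere vanishing, and that any two such measures differ by a constant scalar. Both will follow from the transitivity hypothesis, which by the remark following the definition of transitivity guarantees that any two points $x,y\in M$ are joined by the time-$1$ flow $\phi_1$ of a (time-dependent) Hamiltonian vector field. The plan is to leverage this flow to transport the value of $\eta$ from one point to another.

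First I would establish that $\eta$ is nonvanishing. The zero set $Z=\{x\in M : \eta_x = 0\}$ is closed. I would argue it is also open, so that by connectedness $Z$ is either all of $M$ or empty; since $\eta\neq 0$ we conclude $Z=\emptyset$. To see $Z$ is open, or rather to see $M\setminus Z$ is closed under the flow transport, I would use the invariance condition: if $X$ is a Hamiltonian vector field with flow $\phi_t$, then $\phi_t^*\eta = \eta$, so $\eta$ is either zero at every point of an integral curve or nonzero along the whole curve. Combined with transitivity — every point is reachable from any other by such a flow — this forces $Z$ to be flow-invariant in a way incompatible with it being a proper nonempty closed subset. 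More precisely, if $\eta_x\neq 0$ for some $x$, then for every $y\in M$ choose the Hamiltonian flow with $\phi_1(x)=y$; invariance gives $(\phi_1)_* \eta_x = \eta_y$ up to the pullback identification, and since $\phi_1$ is a diffeomorphism it carries a nonzero density to a nonzero density, so $\eta_y\neq 0$. Hence $\eta$ is nonvanishing everywhere.

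For uniqueness up to rescaling, suppose $\eta$ and $\eta'$ are both nonzero invariant measures; by the previous paragraph both are nowhere vanishing. The quotient $f = \eta'/\eta$ is then a well-defined smooth positive (or nowhere-zero) function on $M$, since at each point the density fiber $|\Lambda|_x(M)$ is one-dimensional. I would show $f$ is constant. Invariance of both densities under any Hamiltonian flow $\phi_t$ gives $\phi_t^* \eta = \eta$ and $\phi_t^*\eta' = \eta'$, so $\phi_t^* f = (\phi_t^*\eta')/(\phi_t^*\eta) = \eta'/\eta = f$; that is, $f$ is constant along every Hamiltonian flow, hence $Xf = 0$ for every Hamiltonian vector field $X$. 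By transitivity, every tangent vector at every point is the value of a Hamiltonian vector field, so $df$ vanishes identically, and connectedness forces $f$ to be a single constant. Thus $\eta' = c\,\eta$.

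The main obstacle I anticipate is handling the transport carefully for \emph{time-dependent} Hamiltonian vector fields, since the reachability statement in the excerpt is phrased in terms of a time-dependent $X_t$. The flow of a time-dependent Hamiltonian vector field is a composition of flows of autonomous Hamiltonian fields (or a limit thereof), so the invariance $\phi_t^*\eta = \eta$ must be argued to persist under this composite flow — this is where I would need to check that the defining invariance condition (preservation by every Hamiltonian vector field, i.e. $\mathcal{L}_X \eta = 0$ for all Hamiltonian $X$) indeed implies $\phi_1^*\eta = \eta$ for the composite time-$1$ map. This is essentially the statement that $\frac{d}{dt}\phi_t^*\eta = \phi_t^*(\mathcal{L}_{X_t}\eta) = 0$, which is routine once one knows each $X_t$ is Hamiltonian, but it is the one place where the time-dependence requires a moment of care rather than a direct appeal to a single flow.
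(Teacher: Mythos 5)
Your proposal is correct and follows essentially the same route as the paper: the uniqueness half is identical (the ratio $s=\eta'/\eta$ is killed by every Hamiltonian vector field, hence constant by transitivity and connectedness), and your nonvanishing half differs only cosmetically, transporting $\eta_x\neq 0$ globally via the time-dependent flow remark where the paper instead shows the vanishing set is open and closed. Your closing check that $\tfrac{\d}{\d t}\phi_t^*\eta=\phi_t^*(\mathcal{L}_{X_t}\eta)=0$ for time-dependent flows is exactly the point that makes the transport argument rigorous, so there is no gap.
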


\begin{proof}
	Suppose for a contradiction that $\eta$ vanishes at some point $x\in M$. Transitivity and invariance imply that $\eta=0$ on a neighborhood of $x$, and consequently that the vanishing set of $\eta$ is open and nontrivial. Since $M$ is connected and the vanishing set is closed, we obtain the desired contradiction that $\eta=0$. Therefore, $\eta$ is nonvanishing.

	We now prove uniqueness up to rescaling. Since $\eta$ is nonvanishing, it follows that that every smooth measure on $M$ is of the form $s\eta$ for some positive $s\in C^\infty(M)$. If $s\eta$ is invariant, then
	\[
		0 = \L_X (s\eta) = (Xs)\eta,\hspace{.8cm}X\in\X_H(M),
	\]
	from which obtain $Xs=0$ for every local Hamiltonian vector field $X$. Since $(M,\omega)$ is transitive and connected, we conclude that $s$ is constant.
\end{proof}

In contrast to the symplectic case, not every polysymplectic manifold admits a nonzero invariant measure.

\begin{example}\label{eg:transitive_dynamics}
	\begin{enumerate}[i.]
		\item Suppose the manifold $Q$ is modeled on the vector space $U$. Every constant vector field on $U\oplus\Hom(U,V)$ preserves the constant $V$-symplectic structure $\omega(u+\phi,u'+\phi') = \phi'(u)-\phi(u')$, and is thus locally Hamiltonian. Since every coordinate chart $\phi:O\subset Q\to U$ determines a local $V$-symplectomorphism from $U\oplus\Hom(U,V)$ to $\Hom(TQ,V)$, it follows that $\Hom(TQ,V)$ is transitive.
		
			The space $U\oplus\Hom(U,V)$ possesses a nonzero invariant measure only if every linear $V$-symplectomorphism has determinant $\pm1$. However, for $\alpha>1$, the $V$-symplectomorphism $(u+\phi)\mapsto(\alpha u+\alpha^{-1}\phi)$ has determinant $0<\alpha^{k(1-\ell)}<1$. Therefore, $U\oplus\Hom(U,V)$ does not admit a nonzero invariant measure.
		\item Consider $S^2$ as the unit sphere in $\R^3$ and let $\bar\omega\in\Omega^2(S^2,\R)$ be any symplectic structure. Define the $\R^3$-symplectic form $\omega\in\Omega^2(S^2,\R^3)$ by $\omega(X,Y) = \bar\omega(X,Y)\cdot p$ for $X,Y\in T_pS^2$. Since the only nontrivial $\R^3$-symplectomorphism of $(S^2,\omega)$ is reflection through the origin, it follows that $\omega$ is not preserved by any nontrivial local vector field. Therefore, $(S^2,\omega)$ is nowhere transitive and every measure on $S^2$ is an invariant measure.
	\end{enumerate}
\end{example}

We circumvent this difficulty by restricting the space of admissible Hamiltonian vector fields to those associated to a distinguished class of \emph{classical observables}, defined as follows.

\begin{definition}
	An \emph{algebra of (classical) observables} $\O$ is any Lie subalgebra of $C_H^\infty(M,V)$.
\end{definition}

Throughout this paper, we will assume that $\O$ contains the constant functions $V\subset C_H^\infty(M,\omega)$.

We say that $(M,V)$ is transitive with respect to $\O$, or that $\eta\in|\Lambda|(M)$ is invariant with respect to $\O$, when the indicated condition obtains with respect to the image of $\O$ under the map $X:C_H^\infty(M,V)\to\X(M)$.

\begin{example}
	Let $\O\subset C_H^\infty(G,\g)$ consist of those functions of the form $g\mapsto \Ad_g\,\xi$ for $\xi\in\g$. Since the right-invariant vector fields $\underline\g\subset\X(G)$ are Hamiltonian, it follows that $(G,-\d\theta)$ is transitive with respect to $\O$, and that the $\O$-invariant measures on $G$ are precisely the left Haar measures.
\end{example}


\subsection{Local Polysymplectic Manifolds}

We now describe a natural extension of the $V$-symplectic formalism to the setting of local coefficients.

\begin{definition}
	A \emph{local $V$-symplectic manifold} consists of a smooth manifold $M$, a flat vector bundle $(\tilde{V},\nabla)\to M$ modeled on $V$, and a $\d^\nabla$-closed nondegenerate $2$-form $\omega\in\Omega^2(M,\tilde{V})$. A \emph{Hamiltonian section} $f\in\Gamma(M,\tilde{V})$ is one for which there exists a \emph{Hamiltonian vector field} $X\in\X(M)$ with $\d^\nabla f = -\iota_X\omega$.
\end{definition}

By trivializing $\tilde{V}\to M$ over a neighborhood $O\subset M$, a local $V$-Hamiltonian manifold is identified with to a $V$-Hamiltonian manifold on $O$ in the natural way. It is interesting to compare this construction with the multisymplectic formalism.

\begin{definition}
	A \emph{multisymplectic structure} on $M$ is a closed nondegenerate differential form $\Omega\in\Omega^*(M)$. More specifically, $\Omega$ is said to be \emph{$k$-plectic} if $\Omega\in\Omega^{k+1}(M)$. A \emph{Hamiltonian form} $\alpha\in\Omega^{k-1}(M)$ is one for which there exists a \emph{Hamiltonian vector field} $X\in\X(M)$ with $\d\alpha = -\iota_X\Omega$.
\end{definition}

\begin{remark}
	In fact, the multisymplectic formalism is occasionally more general than what we have just defined. We refer to \cite{RyvkinWurzbacher19,CantrijnIbortLeon99} for relevant introductions.
\end{remark}

If $\Lambda^{k-1}T^*M$ admits a flat connection, then the map
\[
	\bar{\cdot}:\Omega^\ell(M)\to\Omega^2(M,\Lambda^{\ell-2} T^*M)
\]
given by
\[
	\bar\alpha(X,Y) = \iota_Y\iota_X\alpha \in\Omega^{\ell-2}(M),	\hspace{1cm}X,Y\in\X(M),
\]
takes a $k$-plectic structure $\Omega\in\Omega^{k+1}(M)$ to a local polysymplectic structure $\bar\Omega\in\Omega^2(M,\Lambda^{k-1}T^*M)$. However, this transformation does not preserve Hamiltonian dynamics. In particular, it is not true that $\d\alpha=\iota_X\Omega$ implies $\d^\nabla\bar\alpha = -\iota_X\bar\Omega$, for $\alpha\in\Omega^{k-1}(M)$ and $X\in\X(M)$. We note that the assignment $\Omega\mapsto\bar\Omega$ is the \emph{symbol map} of \cite{ForgerGomes13}.


\section{Prequantization}\label{sec:prequantization}

Our aim in this section is to extend the theory of symplectic prequantization to the polysymplectic setting. Specifically, we aim to extend the Hamiltonian symmetries of an algebra of classical observables $\O\subset C_H^\infty(M,V)$ to an algebra of symmetries of the sections of a Hermitian vector bundle $E\to M$, in such a way that the nonzero values of an observable $f\in\O$ at the points of $M$ generate nontrivial unitary transformations of the corresponding fibers of $E$. When $(M,\omega,\O,\eta)$ is transitive, and $\eta$ is a nonvanishing invariant measure, we arrive at an equivalent theory of prequantum vector bundles. After investigating the consequences of these definitions, including classification schemes and conditions for existence, we touch on some differences that are encountered in the local polysymplectic setting.


\subsection{Construction of Prequantizations}\label{subsec:construction_of_prequantizations}

Fix a $V$-symplectic manifold $(M,\omega)$, an algebra of classical observables $\O\subset C_H^\infty(M,V)$, and an invariant measure $\eta$ on $M$.

\begin{definition}
	A \emph{prequantization} of $(M,\omega,\O,\eta)$ consists of a Hermitian vector bundle $E\to M$ and a faithful first-order Lie algebra representation
	\[
		Q:\O\to\End\,\Gamma(M,E),
	\]
	which preserves the inner product on the subspace of smooth $L^2$ sections of $\Gamma(M,E)$ with respect to $\eta$, and which extends the Hamiltonian vector fields on $M$ in the sense that 
	\[
		Q_f(s\psi) = (X_fs)\psi+sQ_f\psi,
	\]
	for all $f\in \mathcal{O}$, $s\in C^\infty(M)$, and $\psi\in\Gamma(E)$. By \emph{first-order}, we mean that if $f$ vanishes to first order at $x\in M$ then $(Q_f\psi)_x=0$ for all $\psi\in\mathcal{H}$. The operator $Q_f$ is the \emph{quantum observable} corresponding to $f\in\O$. We call $\mathcal{H}=\Gamma(M,E)$ the \emph{space of prequantum states}, and we say that $(M,\omega,\O,\eta)$ is \emph{quantizable} if it admits a prequantization.
\end{definition}

For any connection $\nabla'$ on $E$, we have $(Q-\nabla'_X)_f\, s\psi = s(Q-\nabla'_X)_f\psi$ for all $s\in C^\infty(M)$ and $f\in\O$, $s\in C^\infty(M)$, and $\psi\in\mathcal{H}$, so that $A'=Q-\nabla'_X$ is tensorial, and thus $Q_f=\nabla'_{X_f}+A'_f$ is a first-order differential operator on $E$ for every $f\in\O$. The Hamiltonian extension property of $Q$ is thus expressed diagrammatically as
\begin{center}
	\begin{tikzpicture}[scale=2]
		\node (A) at (-.2,0) {$\O$};
		\node (B) at (1,1) {$\mathcal{D}_1(M,E)_0$};
		\node (C) at (1,0) {$\mathfrak{X}(M)$,};
		
		\path[->,dashed] (A) edge node[above left] {$Q$} (B);
		\path[->] (A) edge node[below] {$X$} (C);
		\path[->] (B) edge node[right] {$\sigma$} (C);
	\end{tikzpicture}
\end{center}
where $\mathcal{D}_1(M,E)_0$ is the space of first-order operators $D$ on $E$ whose principal symbol $\sigma(D)$ involves only scalar multiplication on the fibers of $E$, so that in particular we may identify $\sigma(D)$ with a vector field on $M$. See, for example, \cite[Section 2.1]{BerlineGetzlerVergne92} for a reference on the symbol map $\sigma$ which corresponds with our use here.

Further observe that the Hamiltonian extension property is equivalent to the condition that $Q_f\hspace{.5pt}m_s = m_{X_fs}$, where $m_s$ denotes multiplication by $s\in C^\infty(M)$, and where the application of $Q_f$ on the operator $m_s$ is given by $(Q_f\hspace{.5pt}m_s)\hspace{.5pt}\psi = [Q_f,m_s]\psi$ for all $\psi\in\H$.

\begin{definition}
	A \emph{morphism of prequantizations} $\tilde\phi$ from a prequantization $(E,Q)$ of $(M,\omega,\O,\eta)$ to a prequantization $(E',Q')$ of $(M',\omega',\O',\eta')$ consists of a $V$-symplectomorphism $\phi:(M,\omega)\to (M',\omega')$ and a lift $\tilde\phi:E\to E'$ such that $\phi^*\O'\subset\O$ and $\tilde\phi^*(Q'_{f'}\,\psi') = Q_{\phi^*\!f'}\,\tilde\phi^*\psi'$, for all $f'\in\O'$ and $\psi'\in\mathcal{H}'$. We call $\tilde\phi$ an \emph{automorphism} when $(M,\omega,\O,\eta)=(M',\omega',\O',\eta')$ and when $\phi$ is the identity on $M$.
\end{definition}

Henceforth, we shall write $(M,\omega)$ for $(M,\omega,\O,\eta)$ and take $\O$ and $\eta$ to be understood.

Note that $E$ is naturally a bundle of $V$-modules, as the Hamiltonian extension property implies that $Q_v$ is tensorial for each $v\in V$, which we identify with the corresponding constant function on $M$. We will denote the fiber action by $A$, so that $A_v(\psi_x) = (Q_v\psi)_x$.

\begin{proposition}\label{prop:induced_V-linear_connection}
	If $(M,\omega)$ is transitive, then $\nabla_X\psi = (Q-A)_{f_X}\psi$ defines a $V$-linear connection on $E$, where $X\in\X_H(M)$ is a Hamiltonian vector field, and where $(A_f\psi)_x = A_{f(x)}\psi_x$ for all $x\in M$.
\end{proposition}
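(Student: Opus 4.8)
The plan is to verify in turn that the stated formula is \emph{well defined} (independent of the auxiliary choice of Hamiltonian function), that it assembles into a genuine connection, and that this connection is compatible with the fiberwise $V$-action. Throughout I would write $X = X_f$ for a Hamiltonian vector field with a chosen Hamiltonian function $f \in \O$, and I would lean on three facts already available: that $Q$ is a first-order Lie algebra representation, that $A_v\psi = Q_v\psi$ whenever $v \in V$ is constant (since $Q_v$ is tensorial), and that the operator $A_f$ acts at $x$ through the single value $f(x) \in V$.

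The essential step is a pointwise lemma: for fixed $\psi$ and $x$, the value $\big((Q-A)_f\psi\big)_x$ depends only on the tangent vector $(X_f)_x$, and in particular not on the choice of $f$ representing a given Hamiltonian vector field. Given $f,g \in \O$ with $(X_f)_x = (X_g)_x$, I would set $h = f - g$, so that $(X_h)_x = 0$ and hence $(\d h)_x = -\iota_{(X_h)_x}\omega = 0$. Then $h - h(x) \in \O$ vanishes to first order at $x$, so the first-order property gives $\big(Q_{h-h(x)}\psi\big)_x = 0$; since $h(x) \in V$ is constant one has $Q_{h(x)} = A_{h(x)}$ on the fiber over $x$, and $(A_h\psi)_x = A_{h(x)}\psi_x$, whence $\big((Q-A)_h\psi\big)_x = 0$, i.e. $\big((Q-A)_f\psi\big)_x = \big((Q-A)_g\psi\big)_x$. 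This one computation establishes well-definedness and tensoriality in the $X$-slot at once.

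Next I would globalize using transitivity. Since the Hamiltonian vector fields span $T_xM$ at every $x$, the lemma lets me define a map $T_xM \to E_x$ by $(X_f)_x \mapsto \big((Q-A)_f\psi\big)_x$, which is linear because $Q$ and $A$ are linear in $f$ and $X_f + X_g = X_{f+g}$. Choosing near any point finitely many Hamiltonian functions whose Hamiltonian vector fields form a local frame (possible by transitivity and continuity) shows the assignment is smooth, so it defines $\nabla\psi \in \Gamma(T^*M \otimes E)$ and extends to all of $\X(M)$ by $C^\infty(M)$-linearity. The Leibniz rule is then immediate: $Q_f(s\psi) = (X_f s)\psi + sQ_f\psi$ by the Hamiltonian extension property, while $A_f(s\psi) = sA_f\psi$ since $A_f$ is fiberwise, giving $\nabla_X(s\psi) = (Xs)\psi + s\nabla_X\psi$. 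For $V$-linearity I would check $\nabla_X(A_v\psi) = A_v(\nabla_X\psi)$ for constant $v$: writing $A_v\psi = Q_v\psi$ and expanding, the difference collapses to $(Q_fQ_v - Q_vQ_f)\psi + (A_vA_f - A_fA_v)\psi$; the first term is $Q_{\{f,v\}}\psi = 0$ because $Q$ is a homomorphism and $\{f,v\} = X_f v = 0$ for constant $v$, and the second vanishes because $V$ is abelian, so $A_{f(x)}$ and $A_v$ commute fiberwise.

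I expect the main obstacle to be precisely the pointwise lemma: the formula is presented as depending on a choice of Hamiltonian function, and the crux is recognizing that the first-order hypothesis, combined with the identification $Q_v = A_v$ on constants, is exactly strong enough both to eliminate that ambiguity and to reduce the dependence of $\nabla_X\psi$ at $x$ to the value $(X_f)_x$. Once this is secured, the appeal to transitivity for smoothness and spanning, the Leibniz rule, and the $V$-linearity calculation are all comparatively routine.
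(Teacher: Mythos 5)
Your proof is correct and follows essentially the same route as the paper's: your pointwise lemma is precisely the paper's two observations (well-definedness, via $(Q-A)_v=0$ for constant $v$, and tensoriality, via the first-order property applied to $f_X - f_X(x)$) merged into a single statement, and your Leibniz and $V$-linearity computations coincide with the paper's use of the extension property and of $0 = Q_{\{f,v\}} = [\nabla_{X_f}+A_f, A_v]$. The only additions are routine details the paper leaves implicit, namely smoothness via local Hamiltonian frames and the extension of $\nabla$ to all of $\X(M)$.
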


\begin{proof}
	Transitivity guarantees the existence of $f_X$. Since the difference of two Hamiltonian functions $f_X-f_X'$ is constant, $(Q-A)_{f_X-f_X'}=0$ and thus $\nabla$ is well-defined. If $X\in\X_H(M)$ extends the zero vector at a point $x\in M$, then $\d f_X$ vanishes at $x$. Since $Q$ is first-order, $\nabla_Xf=Q_{f_X-f_X(x)}\psi$ vanishes at $x$, so that $\nabla_X\psi$ is tensorial in $X$. The Leibniz property follows as
	\[
		(Q-A)_{f_X}(s\psi) = (X_{f_X}s)\psi+sQ_{f_X}\psi - sA_{f_X}\psi = (Xs)\psi + s(Q-A)_{f_X}\psi.
	\]
	Linearity and unitarity follow from the respective properties of $Q$ and $A$. Finally, since $Q$ is a morphism of Lie algebras,
	\[
		0 = Q_{\{f,v\}} = [\nabla_{X_f}+A_f,A_v] = [\nabla_{X_f},A_v],
	\]
	for all $f\in\O$ and $v\in V$, from which $[\nabla, A]=0$. That is, $\nabla$ preserves the $V$-module structure of $E$.
\end{proof}

We have shown that $Q_f=\nabla_{X_f}+A_f$ splits naturally into a first-order component $\nabla_{X_f}$ and a tensorial component $A_f$. Note that the $V$-linearity of $\nabla$ is equivalent to the property that $A$ is a parallel section of the bundle $\Hom(V,\End\,E)\to M$ with respect to the $\nabla$-induced connection.

\begin{lemma}\label{lem:V-module_structure_parallelism_condition}
	The $V$-module structure $A$ is parallel if and only if $[\nabla_{X_f},A_h] = A_{\{f,h\}}$.
\end{lemma}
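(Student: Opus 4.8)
The plan is to expand the commutator $[\nabla_{X_f},A_h]$ by regarding $A_h$ as the pointwise contraction of the section $A\in\Gamma\big(M,\Hom(V,\End E)\big)$ with the $V$-valued function $h$, and then to read off that the only term not already of the form $A_{\{f,h\}}$ is precisely the covariant derivative of $A$. Throughout I work in the transitive, connected setting of Proposition \ref{prop:induced_V-linear_connection}, which is what makes $\nabla$ (and hence the notion of $A$ being parallel) available.

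First I would note that $\nabla^{\End E}_{X_f}A_h=[\nabla_{X_f},A_h]$ as operators on $\Gamma(E)$, where $\nabla^{\End E}$ is the connection on $\End E$ induced by $\nabla$. Since $V$ carries its trivial flat structure (it is the fixed coefficient space, identified with constant functions), the induced connection on $\Hom(V,\End E)$ obeys the Leibniz rule
\[
	[\nabla_{X_f},A_h]=\nabla^{\End E}_{X_f}\big(A(h)\big)=(\nabla_{X_f}A)(h)+A_{X_f h},
\]
where $X_f h\in C^\infty(M,V)$ is the directional derivative of $h$. By the definition of the bracket we have $\{f,h\}=X_f h$, so $A_{X_f h}=A_{\{f,h\}}$, giving the identity
\[
	[\nabla_{X_f},A_h]=(\nabla_{X_f}A)(h)+A_{\{f,h\}}.
\]
The equivalence in the lemma thus reduces to showing that $A$ is parallel if and only if $(\nabla_{X_f}A)(h)=0$ for all $f,h\in\O$.

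The forward implication is immediate: if $A$ is parallel then $\nabla_{X_f}A=0$ and the stated commutator formula follows. For the converse I would argue pointwise. At $x\in M$ the quantity $(\nabla_{X_f}A)(h)\big|_x$ depends only on $(X_f)_x$ and on $h(x)$, since a covariant derivative is tensorial in its direction and $(\nabla_{X_f}A)\big|_x\in\Hom(V,\End E_x)$ is simply evaluated at $h(x)$. Because $\O$ contains the constant functions $V$, letting $h$ range over constants shows that $h(x)$ attains every value in $V$, whence $(\nabla_{X_f}A)\big|_x=0$. Finally, invoking transitivity with respect to $\O$---so that the vectors $(X_f)_x$, $f\in\O$, span $T_xM$---yields $\nabla_X A=0$ for every $X\in\X(M)$, that is, $A$ is parallel.

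The computation is essentially bookkeeping; the genuine content lies in the converse and rests on the two structural hypotheses of this section: the standing assumption $V\subset\O$ (which lets the single value $h(x)$ sweep out all of $V$) and $\O$-transitivity (which upgrades vanishing along Hamiltonian directions to vanishing in every direction). The step requiring care is the Leibniz expansion: one must use that the coefficient bundle $V$ is flat and trivial, so that its contribution is exactly $A_{X_f h}$ and hence, via $X_f h=\{f,h\}$, precisely the term $A_{\{f,h\}}$ appearing in the statement. This flatness is the subtle point---any other convention on the $V$ factor would spoil the clean split into $(\nabla_{X_f}A)(h)$ and $A_{\{f,h\}}$ on which the argument depends.
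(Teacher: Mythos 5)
Your proof is correct and follows essentially the same route as the paper's: both hinge on the Leibniz expansion of $[\nabla_{X_f},A_h]$ coming from the trivial flat structure on $V$ (the paper phrases it by writing $h=sv$ and using linearity, where you use the induced connection on $\Hom(V,\End E)$ directly), and both converses reduce to the observation that $\{f,v\}=0$ for constant $v\in V$ together with transitivity. No gaps; the only difference is packaging.
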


\begin{proof}
	If $sv\in\O$ for $s\in C^\infty(M)$ and $v\in V$, then
	\[
		\nabla_{X_f} A_{sv} = \nabla_{X_f} sA_v = (X_fs)A_v + s\nabla_{X_f}A_v = A_{X_f(sv)} = A_{\{f,sv\}}.
	\]
	The forward implication follows by linearity in $h$. The converse follows as $\{\,\cdot\,,v\}=0$ for all $v\in V$.
\end{proof}

\begin{lemma}\label{prop:prequantum_curvature}
	The curvature $F^\nabla\in\Omega^2(M,\End\,E)$ is given by
	\[
		F^\nabla(X,Y)\,\psi = -\omega(X,Y)\cdot \psi,
	\]
	where $\cdot$ denotes the action of $V$ on $E$.
\end{lemma}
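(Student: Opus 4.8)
The plan is to compute $F^\nabla$ directly on Hamiltonian vector fields, where $\nabla$ has the explicit form $\nabla_{X_f} = (Q-A)_f = Q_f - A_f$ from Proposition~\ref{prop:induced_V-linear_connection}, and then to pass to arbitrary vector fields by tensoriality. Since $F^\nabla$ is a tensorial $\End\,E$-valued $2$-form and transitivity guarantees that the Hamiltonian vector fields span each tangent space, it suffices to evaluate both sides on a pair $X_f, X_h$ with $f,h\in\O$. So the entire argument reduces to computing $F^\nabla(X_f,X_h) = [\nabla_{X_f},\nabla_{X_h}] - \nabla_{[X_f,X_h]}$.

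First I would record the bracket identity $[X_f,X_h] = X_{\{f,h\}}$. This is the standard consequence of $\omega$ being closed: from $\iota_{X_f}\omega = -\d f$ one gets $\L_{X_f}\omega = \d\iota_{X_f}\omega + \iota_{X_f}\d\omega = 0$, whence $\iota_{[X_f,X_h]}\omega = \L_{X_f}\iota_{X_h}\omega = -\d\{f,h\}$, and nondegeneracy yields the claim. Consequently $\nabla_{[X_f,X_h]} = \nabla_{X_{\{f,h\}}} = Q_{\{f,h\}} - A_{\{f,h\}}$.

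Next I would expand $[\nabla_{X_f},\nabla_{X_h}] = [Q_f-A_f,\,Q_h-A_h]$ by bilinearity of the commutator, using three inputs: that $Q$ is a representation, so $[Q_f,Q_h] = Q_{\{f,h\}}$; that $A$ is parallel, so by Lemma~\ref{lem:V-module_structure_parallelism_condition} one has $[\nabla_{X_f},A_h] = A_{\{f,h\}}$; and that $[A_f,A_h]=0$, since $A$ represents the abelian Lie algebra $V$ and the pointwise operators $A_{f(x)}$ therefore commute. Combining the cross-terms $[Q_f,A_h]=A_{\{f,h\}}$ and $[A_f,Q_h]=-A_{\{h,f\}}$ together with the antisymmetry $A_{\{h,f\}}=-A_{\{f,h\}}$ gives $[\nabla_{X_f},\nabla_{X_h}] = Q_{\{f,h\}} - 2A_{\{f,h\}}$. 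Subtracting $\nabla_{[X_f,X_h]} = Q_{\{f,h\}} - A_{\{f,h\}}$ cancels the $Q$-term and leaves $F^\nabla(X_f,X_h) = -A_{\{f,h\}}$. Since $\{f,h\} = \omega(X_f,X_h)$, evaluating at $\psi$ gives $A_{\omega(X_f,X_h)}\psi = \omega(X_f,X_h)\cdot\psi$, which is the asserted formula.

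The computation is short, so the main obstacle is sign-bookkeeping rather than any genuine difficulty. The delicate point is that it is precisely the antisymmetry $A_{\{h,f\}}=-A_{\{f,h\}}$ that promotes the two cross-terms to a coefficient of $-2A_{\{f,h\}}$; without this doubling the $\nabla_{[X_f,X_h]}$ contribution would not cancel the $Q_{\{f,h\}}$ term and leave exactly $-A_{\{f,h\}}$. The only conceptual step worth stating with care is the reduction to Hamiltonian vector fields, which relies jointly on the tensoriality of curvature and on transitivity to realize arbitrary tangent vectors as values of Hamiltonian fields.
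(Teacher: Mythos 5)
Your proof is correct and takes essentially the same route as the paper's: both rest on the identity $[Q_f,Q_h]=Q_{\{f,h\}}$, the parallelism lemma giving $[\nabla_{X_f},A_h]=A_{\{f,h\}}$, the vanishing of $[A_f,A_h]$, and the identities $X_{\{f,h\}}=[X_f,X_h]$ and $\{f,h\}=\omega(X_f,X_h)$, with the factor $2A_{\{f,h\}}$ producing the residue $-A_{\{f,h\}}$ after subtracting $\nabla_{[X_f,X_h]}$. The only differences are cosmetic — you expand $[\nabla_{X_f},\nabla_{X_h}]=[Q_f-A_f,Q_h-A_h]$ where the paper expands $[Q_f,Q_h]=[\nabla_{X_f}+A_f,\nabla_{X_h}+A_h]$ and rearranges — and you usefully make explicit two steps the paper leaves implicit: the reduction to Hamiltonian vector fields via tensoriality of $F^\nabla$ and transitivity, and the derivation of $[X_f,X_h]=X_{\{f,h\}}$ from closedness of $\omega$.
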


\begin{proof}
	Invoking Proposition \ref{prop:induced_V-linear_connection} and Lemma \ref{lem:V-module_structure_parallelism_condition}, we obtain
	\[
		[Q_f,Q_h] = [\nabla_{X_f}+A_f,\nabla_{X_h}+A_h] = [\nabla_{X_f},\nabla_{X_h}] + 2 A_{\{f,h\}},
	\]
	so that
	\begin{align*}
		\nabla_{X_{\{f,h\}}} + A_{\{f,h\}} = Q_{\{f,h\}} = [\nabla_{X_f},\nabla_{X_h}] + 2A_{\{f,h\}},
	\end{align*}
	and thus
 	\[
		-A_{\omega(X_f,X_h)} = [\nabla_{X_f},\nabla_{X_h}] - \nabla_{[X_f,X_h]} = F^\nabla(X_f,X_h).
	\]
	Here we have used the identities $\{f,h\}=\omega(X_f,X_h)$ and $X_{\{f,h\}}=[X_f,X_h]$.
\end{proof}

\begin{definition}
	A \emph{prequantum vector bundle} on $(M,\omega)$ consists of a faithful Hermitian $V$-module bundle $E\to M$ with a compatible unitary connection $\nabla$ satisfying $F^\nabla = -\omega$.
\end{definition}

By a \emph{$V$-module}, we mean a linear representation of the abelian Lie algebra $V$.

As with the case of prequantizations, we will denote the space of smooth sections $\Gamma(M,E)$ by $\mathcal{H}$ and the faithful $V$-module structure on $E$ by $A$.

\begin{theorem}
	If $(M,\omega)$ is transitive and connected, then there is a natural correspondence between prequantizations $(E,\nabla_X+A)$ and prequantum vector bundles $(E,\nabla,A)$ on $(M,\omega)$.
\end{theorem}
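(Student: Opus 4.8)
The plan is to establish the correspondence in both directions and then check that the two assignments are mutually inverse. The forward direction, sending a prequantization $(E,Q)$ to a prequantum vector bundle, is essentially already assembled from the preceding results: Proposition \ref{prop:induced_V-linear_connection} produces the $V$-linear connection $\nabla_X\psi = (Q-A)_{f_X}\psi$, where $A$ is the tensorial $V$-module structure obtained by restricting $Q$ to the constants; that proposition also shows $\nabla$ is unitary (since $Q$ preserves the inner product) and compatible with $A$ (i.e. $[\nabla,A]=0$), and Lemma \ref{prop:prequantum_curvature} gives $F^\nabla=-\omega$. Faithfulness of $A$ follows from faithfulness of $Q$. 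Hence $(E,\nabla,A)$ is a prequantum vector bundle.

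For the reverse direction, given $(E,\nabla,A)$ I would set $Q_f := \nabla_{X_f}+A_f$ and verify the prequantization axioms. First-orderness is immediate, and the vanishing-to-first-order property uses nondegeneracy of $\omega$: if $\d f_x=0$ then $\iota_{X_f}\omega$ vanishes at $x$, so $X_f$ vanishes at $x$, while $f(x)=0$ gives $A_{f(x)}=0$. The Hamiltonian extension identity $Q_f(s\psi)=(X_f s)\psi+sQ_f\psi$ follows from the Leibniz rule for $\nabla$ and the tensoriality of $A$. Faithfulness uses connectedness: if $Q_f=0$, then comparing first- and zeroth-order parts forces $X_f=0$, hence $\d f=0$, so $f$ is a constant $v\in V$, and then $A_v=Q_v=0$ gives $v=0$ by faithfulness of $A$.

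The key step is the Lie algebra homomorphism property $[Q_f,Q_h]=Q_{\{f,h\}}$, which is exactly the computation in Lemma \ref{prop:prequantum_curvature} run in reverse. Expanding $[\nabla_{X_f}+A_f,\nabla_{X_h}+A_h]$, the term $[A_f,A_h]$ vanishes because $V$ is abelian; the two cross terms each contribute $A_{\{f,h\}}$ by Lemma \ref{lem:V-module_structure_parallelism_condition} (using compatibility of $\nabla$ with $A$) together with antisymmetry of the bracket; and the curvature term is handled by $F^\nabla=-\omega$ and the identities $[X_f,X_h]=X_{\{f,h\}}$ and $\omega(X_f,X_h)=\{f,h\}$, yielding $[\nabla_{X_f},\nabla_{X_h}]=\nabla_{X_{\{f,h\}}}-A_{\{f,h\}}$. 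Summing gives $\nabla_{X_{\{f,h\}}}+A_{\{f,h\}}=Q_{\{f,h\}}$.

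The subtle point, and where the invariant measure $\eta$ earns its place in the data, is inner-product preservation on $L^2$ sections. Combining unitarity of $\nabla$ with the skew-Hermitian property of the $V$-action yields the pointwise identity $h(Q_f\psi,\phi)+h(\psi,Q_f\phi)=X_f\,h(\psi,\phi)$; integrating against $\eta$ and using $\L_{X_f}\eta=0$ turns the right-hand side into $\int_M \d\,\iota_{X_f}\big(h(\psi,\phi)\,\eta\big)$, which vanishes by Stokes, so $Q_f$ is skew-adjoint. I expect the main obstacle to be the analytic justification of this Stokes step for non-compactly-supported $L^2$ sections on a possibly noncompact $M$, which requires a decay or completeness assumption; the algebra is otherwise routine. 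Finally, I would confirm the two constructions are mutually inverse: starting from $Q$, extracting $\nabla$ and $A$, and reassembling $\nabla_{X_f}+A_f$ recovers $Q_f$ since $\nabla_{X_f}=(Q-A)_f$; conversely $X_v=0$ and $X_{f_X}=X$ recover $(\nabla,A)$ from the assembled $Q$. Naturality with respect to morphisms then follows because both assignments are given by the single formula $Q_f=\nabla_{X_f}+A_f$, which is manifestly intertwined by any admissible lift $\tilde\phi$.
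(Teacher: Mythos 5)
Your proposal follows essentially the same route as the paper: the forward direction is assembled from Proposition \ref{prop:induced_V-linear_connection} and Lemma \ref{prop:prequantum_curvature}, and the reverse direction runs the curvature computation backwards through Lemma \ref{lem:V-module_structure_parallelism_condition}, with identical bookkeeping of the cross terms producing $2A_{\{f,h\}}$. One step, however, is glossed too quickly, and it happens to be the only step in the forward direction where the paper does work beyond the cited results: your claim that ``faithfulness of $A$ follows from faithfulness of $Q$.'' These are different kinds of faithfulness --- a prequantum vector bundle must be a \emph{fiberwise} faithful $V$-module, whereas faithfulness of $Q$ is a statement about the global representation on $\Gamma(M,E)$ --- and the implication fails without connectedness: on a disjoint union $M_1\sqcup M_2$ the action could have nonzero kernels $V_1$, $V_2$ on the fibers over the two components with $V_1\cap V_2=0$, making $Q$ faithful while $A$ fails to be faithful on any fiber. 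The paper's argument closes this gap by observing that $A_v$ is parallel, so its vanishing at one point propagates over the connected component, whence $Q_v=0$ and $v=0$; since you already use parallelism of $A$ elsewhere, this is a one-line repair, but as written your forward direction never invokes the connectedness hypothesis at all. On the other side of the ledger, your unitarity discussion goes beyond the paper: the paper's proof of the reverse direction never verifies that $Q_f=\nabla_{X_f}+A_f$ preserves the $L^2$ inner product, and your argument (the pointwise identity from unitarity of $\nabla$ and the skew-Hermitian action $A$, then $\L_{X_f}\eta=0$ and Stokes) is the right one, with the noncompactness caveat you flag being a real issue that the paper's definition quietly absorbs. The mutual-inverse and naturality checks you add are routine and left implicit in the paper.
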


\begin{proof}
	We have shown in Proposition \ref{prop:induced_V-linear_connection} and Lemma \ref{prop:prequantum_curvature} that every prequantization $(E,Q)$ of $(M,\omega)$ satisfies $Q=\nabla_X+A$ for a $V$-module structure $A$ on the fibers of $E$ and a unitary $V$-linear connection $\nabla$ on $E$ with curvature $F^\nabla=-\omega$. It remains only to show that $A$ is faithful on fibers. Fix $x\in M$ and $v\in V$ and suppose $A_v$ vanishes at $x$. Since $A_v$ is parallel, it follows that $A_v$ vanishes on the connected component $M$ of $x$ so that $Q_v\psi = A_v\psi=0$ for all $\psi\in \mathcal{H}$. Thus, $v=0$ and we conclude that $A$ is faithful.

	Now suppose that $(E,\nabla,A)$ is a prequantum vector bundle. The curvature condition $\omega=-F^\nabla$ implies that
	\[
		A_{\{f,h\}} = -F^\nabla(X_f,X_h) = \nabla_{X_{\{f,h\}}} - [\nabla_{X_f},\nabla_{X_h}].
	\]
	An application of Lemma \ref{lem:V-module_structure_parallelism_condition} yields
	\begin{align*}
		[\nabla_{X_f}+A_f,\nabla_{X_h}+A_h]
			&=	[\nabla_{X_f},\nabla_{X_h}] + [\nabla_{X_f},A_h] + [A_f,\nabla_{X_h}] + [A_f,A_h]	\\
			&=	[\nabla_{X_f},\nabla_{X_h}] + 2A_{\{f,h\}}											\\
			&=	\nabla_{X_{\{f,h\}}} + A_{\{f,h\}}.
	\end{align*}
	Consequently, $Q_f=\nabla_{X_f}+A_f$ defines a representation of $\O$ on $\Gamma(M,E)$. If $f$ vanishes to first order at $x\in M$, then $X_f=0$ and $A_f=0$ at $x$, and thus $(Q_f\psi)_x=0$ for all $\psi\in\Gamma(M,E)$ so that $Q$ is first-order. The Hamiltonian extension property of $Q$ is an immediate consequence of the Leibniz property of $\nabla$ and the tensoriality of $A$. If $f\in\O$ is in the kernel of $Q$, then $\nabla_{X_f} = -A_f$ is tensorial, so that $X_f=0$, from which $A_f=0$, and thus $f=0$ and $Q$ is faithful. Therefore, $(E,\nabla_X+A)$ is a prequantization of $(M,\omega)$.
\end{proof}

\begin{remark}
	Under this correspondence, an automorphism of $(E,\nabla,A)$ is naturally identified with a parallel $V$-linear vector bundle automorphism $\tilde\phi_0\in\Aut\,E$. If $M$ is connected, then the parallelism of $A$ implies that $\tilde\phi_0$ is determined by its value at any point $x\in M$.
\end{remark}

Let $\lambda:V\to\C$ be an $\R$-linear map and recall that the \emph{$\lambda$-weight space} of the representation $A_x$ at the fiber $E_x$ is the subspace
\[
	(E_x)_\lambda = \{\sigma\in E_x \,|\, (A_x)_v\sigma = \lambda_v\sigma \text{ for all }v\in V\},
\]
The form $\lambda$ is said to be a \emph{weight} of $A_x$ if $(E_x)_\lambda$ is nonzero. We will denote the set of weights of $A_x$ by $w(A_x)$. Since the abelian Lie algebra action $A_x$ is unitary, $E_x$ splits as the sum of weight spaces and $w(A)\subset \i V^*$.

\begin{theorem}\label{thm:prequantization_structure}
	Every prequantum vector bundle $(E,\nabla,A)$ splits as the sum of weight bundles $\oplus_{\lambda\in w(A)}(E_\lambda,\nabla|_{E_\lambda},\lambda)$.
\end{theorem}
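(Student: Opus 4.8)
The plan is to leverage the single structural fact noted just above the theorem, namely that the $V$-linearity of $\nabla$ is equivalent to each $A_v$ being a parallel section of $\End\,E$. The guiding principle is that a parallel endomorphism commutes with parallel transport, so that the pointwise weight-space decomposition is itself parallel and hence assembles into a global bundle splitting. Notably, the curvature condition $F^\nabla=-\omega$ is not needed for the splitting; only the parallelism of $A$ is used.

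First I would fix $v\in V$ and record that, since $\nabla A_v=0$, the parallel transport $P_\gamma\colon E_x\to E_y$ along any path $\gamma$ satisfies $P_\gamma\circ (A_x)_v=(A_y)_v\circ P_\gamma$. Letting $v$ range over $V$ shows that $P_\gamma$ carries the joint eigenspace $(E_x)_\lambda$ isomorphically onto $(E_y)_\lambda$. Two consequences are immediate: on each connected component of $M$ the weight set $w(A_x)$ and the multiplicities $\dim(E_x)_\lambda$ are independent of $x$, and the subset $E_\lambda=\bigsqcup_x (E_x)_\lambda$ is invariant under parallel transport in every direction.

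Next I would promote $E_\lambda$ to a smooth $\nabla$-invariant subbundle. The cleanest route is to choose, near any $x_0$, an orthonormal frame of $E_{x_0}$ adapted to the weight decomposition $E_{x_0}=\bigoplus_\lambda (E_{x_0})_\lambda$ and to parallel-transport it along a smooth family of radial paths; parallelism of $A$ guarantees that each transported frame vector remains a weight vector of the same weight, yielding a smooth local frame adapted to $\bigoplus_\lambda E_\lambda$. Thus each $E_\lambda$ is a smooth subbundle, and its invariance under parallel transport in all directions is exactly the statement that $\nabla$ restricts to a connection $\nabla|_{E_\lambda}$. Because the unitary actions $A_x$ make distinct weight spaces orthogonal, $E=\bigoplus_{\lambda\in w(A)} E_\lambda$ is an orthogonal decomposition of Hermitian bundles respected by $\nabla$; and on $E_\lambda$ the action of $V$ is scalar multiplication by $\lambda$ by definition of the weight space. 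This gives the asserted splitting $(E,\nabla,A)=\bigoplus_{\lambda\in w(A)}(E_\lambda,\nabla|_{E_\lambda},\lambda)$.

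The main point requiring care is the smoothness of the weight bundles, i.e.\ that the decomposition does not degenerate as the base point moves; this is precisely where one must use that the $A_v$ are simultaneously diagonalizable commuting skew-Hermitian endomorphisms with locally constant spectrum, rather than merely parallel, so that an adapted frame can be transported smoothly. I would also flag that the summands $(E_\lambda,\nabla|_{E_\lambda},\lambda)$ are generally not themselves prequantum vector bundles, since the scalar action of a single $\lambda\in\i V^*$ is faithful only when $\dim V=1$; this is consistent with the later discussion of minimal rank.
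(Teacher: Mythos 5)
Your proof is correct and takes essentially the same route as the paper's: parallelism of each $A_v$ makes parallel transport intertwine the fiberwise $V$-actions, hence carry $(E_x)_\lambda$ onto $(E_y)_\lambda$, giving locally constant weights and holonomy-invariant weight spaces that assemble into parallel subbundles $E_\lambda$ with $\nabla|_{E_\lambda}$. Your extra care about smoothness of the subbundles (transporting an adapted frame) and your closing observations about not needing the curvature condition and the summands not being faithful merely make explicit what the paper leaves implicit.
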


\begin{proof}
	Fix a point $x\in M$ and let $\tau_\gamma:E_x\to E_y$ denote the parallel transport along a path $\gamma:I\to M$ from $x$ to $y\in M$. Since $A_v$ is parallel, we have
	\[
		A_v(\tau_\gamma\sigma) = \tau_\gamma(A_v\sigma) = \lambda_v(\tau_\gamma \sigma),
	\]
	for every weight $\lambda:V\to\C$, $\lambda$-weight vector $\sigma\in E_x$, and $v\in V$. It follows that $\tau_\gamma$ establishes a bijection between the weight spaces $(E_x)_\lambda$ and $(E_y)_\lambda$. Thus, the weights $w(A)=w(A_x)$ are independent of $x\in M$. Furthermore, taking $x=y$ shows that $\mathrm{Hol}_x^\nabla$ preserves $(E_x)_\lambda$ and consequently that the parallel transport of $(E_x)_\lambda$ over $M$ describes a vector subbundle $E_\lambda$ with connection $\nabla|_{E_\lambda}$. Since parallel transport preserves weight spaces, the distribution of $V$-modules $(E_\lambda,\lambda)$ is the $V$-linear subbundle of $\lambda$-weight spaces of $(E,A)$. The result follows since the fiber $(E_x,A_x)$ splits as $\oplus_{\lambda\in w(A_x)}((E_x)_\lambda,\lambda)$ at $x$.
\end{proof}

As a consequence, the holonomy group $\mathrm{Hol}_x^\nabla\subset\End\,T_xM$ homomorphically embeds in the torus $U(1)^{\dim V}$.

\begin{remark}
	If each weight bundle $E_\lambda$ is a line bundle, then our approach to prequantization is essentially equivalent to a procedure for $k$-symplectic prequantization due to Awane and Goze \cite{AwaneGoze00}, in which a distinguished vector bundle $(L,\nabla)$ is defined on a $k$-symplectic manifold $(M,\omega_1,\ldots,\omega_k)$ to be the sum of the prequantum line bundles $(L_i,\nabla^{L_i})$ for each presymplectic manifold $(M,\omega_i)$ whenever the prequantum line bundles exists.
\end{remark}

\begin{corollary}
	Two prequantum vector bundles $(E,\nabla,A)$ and $(E',\nabla',A')$ on $(M,\omega)$ are equivalent if and only if $w(A)=w(A')$, where we consider $w(A)$ as a multiset in which the multiplicity of $\lambda\in w(A)$ is equal to its multiplicity as a weight of $A$.
\end{corollary}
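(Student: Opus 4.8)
The plan is to prove both implications by reducing everything to the weight-bundle decomposition furnished by Theorem \ref{thm:prequantization_structure}. First I would pin down what an equivalence actually is. Taking $\phi=\mathrm{id}_M$ in the definition of a morphism of prequantizations and writing $Q_f=\nabla_{X_f}+A_f$, the intertwining condition $\tilde\phi^*(Q'_f\psi')=Q_f\tilde\phi^*\psi'$ says $\tilde\phi$ conjugates $Q'$ into $Q$ for all $f\in\O$. Evaluating on the constants $f=v\in V$ gives $\tilde\phi\circ A_v=A'_v\circ\tilde\phi$, so $\tilde\phi$ is $V$-linear; subtracting the tensorial parts then forces $\tilde\phi\circ\nabla_{X_f}=\nabla'_{X_f}\circ\tilde\phi$ for every Hamiltonian $X_f$, and by transitivity these span $\X(M)$, so $\tilde\phi$ is parallel. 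Thus an equivalence of prequantum vector bundles is precisely a parallel, $V$-linear (and, being an equivalence of Hermitian structures, unitary) bundle isomorphism. Necessity of $w(A)=w(A')$ is then immediate: a $V$-linear isomorphism carries each weight space $(E_x)_\lambda$ isomorphically onto $(E'_x)_\lambda$, matching weights together with their multiplicities, which is exactly equality of the weight multisets.

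For sufficiency, suppose $w(A)=w(A')$ as multisets. By Theorem \ref{thm:prequantization_structure} there are $\nabla$-parallel orthogonal splittings $E=\oplus_\lambda E_\lambda$ and $E'=\oplus_\lambda E'_\lambda$ into weight bundles, on which $V$ acts by the common scalar $\lambda$, and equality of multisets says $\mathrm{rank}\,E_\lambda=\mathrm{rank}\,E'_\lambda=:m_\lambda$ for every $\lambda$. It therefore suffices to build, for each $\lambda$ separately, a unitary parallel isomorphism $E_\lambda\to E'_\lambda$ and take the direct sum; such a map automatically intertwines the scalar actions, since both equal $\lambda$. Here I would use that, by Lemma \ref{prop:prequantum_curvature}, the curvature of $\nabla|_{E_\lambda}$ is $F^\nabla(X,Y)=-A_{\omega(X,Y)}$, which on $E_\lambda$ is the \emph{scalar} $-\omega_\lambda(X,Y)\cdot\mathrm{id}$; combined with the Ambrose--Singer description of holonomy and the fact, already noted after Theorem \ref{thm:prequantization_structure}, that the holonomy embeds in $U(1)^{\dim V}$, the restricted holonomy of $\nabla|_{E_\lambda}$ is central. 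Consequently each weight bundle takes the form $E_\lambda\cong L_\lambda\otimes\underline{\C}^{m_\lambda}$ for a Hermitian line bundle $L_\lambda$ carrying a connection of curvature $-\omega_\lambda$, reducing the problem to showing $L_\lambda\cong L'_\lambda$ as line bundles with connection.

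The main obstacle is exactly this last step. Two such line bundles share the same curvature $-\omega_\lambda$, so they differ by a flat Hermitian line bundle, i.e.\ by a class in $\Hom(\pi_1(M),U(1))$, and the content of the corollary is that this flat ambiguity is trivial -- that the scalar holonomy of a prequantum line bundle is determined by its weight. This is where transitivity and connectedness must be used in an essential way. I would exploit the transitive Hamiltonian dynamics: any loop at $x$ is generated by time-dependent Hamiltonian flows whose prequantum lifts $\exp\bigl(t(\nabla_{X_f}+A_f)\bigr)$ act by the prescribed unitaries, so that the holonomy around the loop is computed from the observables and their brackets alone, independently of the chosen bundle; equivalently, one reads this off from the single-weight case of Theorem \ref{thm:fundamental_theorem_of_prequantum_lattices}, which identifies a prequantum line bundle of weight $\lambda$ up to equivalence with $\lambda$ itself. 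Establishing that the holonomy characters of $L_\lambda$ and $L'_\lambda$ coincide, so that a global parallel unitary identification exists, is the crux; once it is in hand, the summed map $\oplus_\lambda(L_\lambda\otimes\underline{\C}^{m_\lambda})\to\oplus_\lambda(L'_\lambda\otimes\underline{\C}^{m_\lambda})$ is the desired equivalence.
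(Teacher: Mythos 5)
Your necessity direction is fine and is exactly the argument the paper intends: the corollary is stated as an immediate consequence of Theorem \ref{thm:prequantization_structure}, an equivalence is a parallel $V$-linear unitary isomorphism, and such a map carries $(E_x)_\lambda$ onto $(E'_x)_\lambda$, matching weights with multiplicities. The problem is the sufficiency direction, where your proposal has a genuine gap --- one you candidly flag but never close. Two specific points. First, the factorization $E_\lambda\cong L_\lambda\otimes\underline{\C}^{m_\lambda}$ does not follow from Ambrose--Singer: the curvature argument makes only the \emph{restricted} holonomy of $\nabla|_{E_\lambda}$ central, while parallel transport around non-contractible loops may act on $(E_x)_\lambda$ by an arbitrary unitary (twisting $E_\lambda$ by any flat Hermitian bundle leaves the curvature $-\omega_\lambda\,\mathrm{id}$ unchanged), so for $\pi_1(M)\neq 0$ this step already needs justification. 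Second, and more fundamentally, the crux you isolate --- that two Hermitian line bundles with connection of identical curvature $-\omega_\lambda$ have the same holonomy character --- is precisely the assertion that the torsor of flat twists by $\Hom(\pi_1(M),U(1))$ acts trivially, and neither of your proposed routes establishes it. The appeal to Theorem \ref{thm:fundamental_theorem_of_prequantum_lattices} is circular: that theorem comes \emph{after} the corollary, and the injectivity of its bijection (equivalence class determined by $w(A)$) is exactly the present statement in the minimal-rank case. The dynamical argument also fails: the lift $Q_f=\nabla_{X_f}+A_f$ depends on $\nabla$ and not merely on the observables and their brackets, so the holonomy of a loop traced by a Hamiltonian isotopy is not bundle-independent; if it were, a flat twist could never alter holonomy, which is false.

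It is worth saying plainly that the obstruction you ran into is real and not a defect of your ingenuity: on a prequantizable symplectic torus, for example, the prequantum line bundles of a fixed weight form a nontrivial family, parametrized by $\Hom(\pi_1,U(1))$, of pairwise non-isomorphic bundles with connection, all with equal weight multiset. So sufficiency, read with equivalence over $\mathrm{id}_M$ as the paper's Remark on automorphisms suggests, can only hold modulo flat twists, or under a hypothesis such as simple connectedness of $M$. The paper supplies no proof of the corollary and its implicit derivation from Theorem \ref{thm:prequantization_structure} passes over this point silently; your write-up has the merit of exposing it, but as a proof it is incomplete at exactly that step.
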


\begin{proposition}\label{prop:prequantum_vector_bundle_rank}
	If $(E,\nabla,A)$ is a prequantum vector bundle on $(M,\omega)$, then $\mathrm{rank}\, E \geq \dim V$. In the case of equality, $w(A)$ is a basis of $\i V^*$.
\end{proposition}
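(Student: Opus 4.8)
The plan is to combine the weight-bundle decomposition of Theorem \ref{thm:prequantization_structure} with the faithfulness of the $V$-action $A$. By that theorem, $E$ splits as $\oplus_{\lambda\in w(A)}E_\lambda$, and each weight bundle $E_\lambda$ is nonzero (so $\mathrm{rank}\,E_\lambda\geq 1$), since $\lambda$ being a weight means $(E_x)_\lambda\neq 0$. Hence $\mathrm{rank}\,E=\sum_{\lambda\in w(A)}\mathrm{rank}\,E_\lambda\geq |w(A)|$, where $|w(A)|$ denotes the number of distinct weights. Thus the rank bound will follow once I show $|w(A)|\geq\dim V$, and the equality statement will follow by a squeezing argument.

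The crux is to translate faithfulness of $A$ into a spanning condition on the weights. First I would use that $A_x$ is a unitary representation of the abelian Lie algebra $V$, so each $A_v$ is skew-Hermitian and has purely imaginary eigenvalues; this is exactly the observation $w(A)\subset \i V^*$ already recorded in the text, and it is essential, since it places the weights in a real space of dimension $\dim V$ rather than $2\dim V$. Viewing each weight as an $\R$-linear functional $\lambda:V\to\i\R$, I claim faithfulness is equivalent to $w(A)$ spanning $\i V^*$. Indeed, suppose some nonzero $v\in V$ satisfied $\lambda_v=0$ for every $\lambda\in w(A)$. Since $E_x=\oplus_\lambda (E_x)_\lambda$ and $A_v$ acts on $(E_x)_\lambda$ by the scalar $\lambda_v=0$, the endomorphism $A_v$ would vanish on $E_x$; as the weights are independent of the basepoint by Theorem \ref{thm:prequantization_structure}, $A_v$ would vanish identically, contradicting faithfulness. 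So no nonzero $v$ is annihilated by all weights, which by the standard duality between vectors and functionals means $w(A)$ spans $\i V^*$, whence $|w(A)|\geq\dim V$.

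Combining the two observations gives $\mathrm{rank}\,E\geq |w(A)|\geq\dim V$. For the equality case, $\mathrm{rank}\,E=\dim V$ forces equality throughout the chain $\mathrm{rank}\,E=\sum_\lambda\mathrm{rank}\,E_\lambda\geq |w(A)|\geq\dim V$, so that $|w(A)|=\dim V$ and every $E_\lambda$ is a line bundle. A spanning subset of $\i V^*$ whose cardinality equals $\dim V=\dim \i V^*$ is necessarily a basis, so $w(A)$ is a basis of $\i V^*$, as desired.

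I expect the only genuine subtlety to be the duality step in the second paragraph, the clean passage from ``$A$ is faithful'' to ``$w(A)$ spans $\i V^*$''; everything else is bookkeeping with the decomposition. The one point I would handle carefully is that faithfulness concerns the bundle endomorphisms $A_v$, so it is the basepoint-independence of the weights supplied by Theorem \ref{thm:prequantization_structure} that legitimizes checking the vanishing of $A_v$ on a single fiber.
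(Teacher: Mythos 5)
Your proposal is correct and follows essentially the same route as the paper: decompose $E$ into weight bundles, observe that faithfulness of $A$ forces $w(A)$ to span $\i V^*$ (else some nonzero $v$ would act trivially), and conclude $\mathrm{rank}\,E\geq|w(A)|\geq\dim V$, with the equality case yielding a spanning set of cardinality $\dim V$, hence a basis. The paper's proof is terser but identical in substance; your extra care about basepoint-independence of the weights and the explicit squeezing in the equality case are details the paper leaves implicit.
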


\begin{proof}
	Since $E = \oplus_{\lambda\in w(A)}E_\lambda$, we have $\mathrm{rank}\,E\geq |w(A)|$. Observe that $w(A)$ spans $\i V^*$, since otherwise there is a $v\in V$ with $\lambda_v=0$ for each $\lambda\in w(A)$, so that $v\cdot E=0$ in violation of the faithfulness of $(E,A)$. Thus $|w(A)|\geq \dim V$.
\end{proof}

Note that we consider the complex rank of $E$ and the real dimension of $V$. We will say that $(E,\nabla_X+A)$ is a \emph{minimal rank prequantization} when $\mathrm{rank}\,E=\dim V$.


\subsection{Prequantum Lattices}

We turn now to the questions of existence and classification of prequantizations.

\begin{definition}
	We say that a full lattice $I\subset V$ is a \emph{prequantum lattice} for $(M,\omega)$ if $[\omega]_{H^2(M,V)}$ lies in the image of $H^2(M,I)\hookrightarrow H^2(M,V)$, that is, if the pairing $\langle \omega,\cdot \rangle:H_2(M,\Z)\to V$ takes values in $I$. We say that $I$ is \emph{principal} if it is minimal among prequantum lattices for $(M,\omega)$.
\end{definition}

Note that $I\subset V$ is a principal prequantum lattice precisely when $I$ is a sublattice of every prequantum lattice for $(M,\omega)$. We will denote by $I_\omega\subset V$ the image of the pairing $\langle\omega,\cdot\rangle:H_2(M,\Z)\to V$.

\begin{lemma}\label{lem:prequantum_lattice_condition}
	A lattice $I\subset V$ is a prequantum lattice for $(M,\omega)$ if and only if $I_\omega\subset I$. In this case, $I_\omega$ is a principal prequantum lattice for $(M,\omega)$.
\end{lemma}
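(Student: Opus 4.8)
The plan is to regard the two conditions appearing in the definition of a prequantum lattice---that $[\omega]$ lie in the image of $H^2(M,I)\to H^2(M,V)$, and that the period pairing $\langle\omega,\cdot\rangle$ take values in $I$---as genuinely equivalent, to justify this equivalence via the universal coefficient theorem, and then to read off principality of $I_\omega$ as a formal consequence. First I would recall that the universal coefficient sequence
\[
	0\to\mathrm{Ext}^1\big(H_1(M,\Z),G\big)\to H^2(M,G)\xrightarrow{\ \mathrm{ev}_G\ }\Hom\big(H_2(M,\Z),G\big)\to 0
\]
is natural in the coefficient group $G$. Taking $G=V$, the $\mathrm{Ext}$ term vanishes because $V$ is a divisible, torsion-free, hence injective, $\Z$-module; thus $\mathrm{ev}_V$ is an isomorphism, and under it $[\omega]$ corresponds exactly to the period pairing $\langle\omega,\cdot\rangle$. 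Taking $G=I$, the map $\mathrm{ev}_I$ is surjective.

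Next I would chase the square induced by the inclusion $\iota:I\hookrightarrow V$,
\[
	\begin{array}{ccc}
		H^2(M,I) & \xrightarrow{\ \mathrm{ev}_I\ } & \Hom\big(H_2(M,\Z),I\big)\\[4pt]
		\downarrow & & \downarrow\\[4pt]
		H^2(M,V) & \xrightarrow{\ \sim\ } & \Hom\big(H_2(M,\Z),V\big),
	\end{array}
\]
whose vertical maps are induced by $\iota$. Since $\iota$ is injective, the right-hand vertical map is injective with image exactly those homomorphisms $H_2(M,\Z)\to V$ taking values in $I$. If $[\omega]=\iota_*\beta$ for some $\beta\in H^2(M,I)$, then commutativity forces $\langle\omega,\cdot\rangle=\iota\circ\mathrm{ev}_I(\beta)$, so the pairing lands in $I$ and $I_\omega\subset I$. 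Conversely, if $I_\omega\subset I$ then $\langle\omega,\cdot\rangle$ factors as $\iota\circ q$ for a unique $q\in\Hom(H_2(M,\Z),I)$; lifting $q$ through the surjection $\mathrm{ev}_I$ to some $\beta$ and comparing images under the injective $\mathrm{ev}_V$ gives $\iota_*\beta=[\omega]$. This proves the equivalence that $I$ is a prequantum lattice if and only if $I_\omega\subset I$. I expect the main obstacle to be exactly this diagram chase: the left-hand vertical map $H^2(M,I)\to H^2(M,V)$ need not be injective, since the torsion contributed by $\mathrm{Ext}^1(H_1(M,\Z),I)$ is killed in $H^2(M,V)$, so the argument must be run entirely in terms of images rather than treating the symbol $\hookrightarrow$ of the definition as a monomorphism.

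Finally, principality follows formally from the criterion just established. Setting $I=I_\omega$ gives $I_\omega\subset I_\omega$, so $I_\omega$ is a prequantum lattice; that it is an honest lattice, rather than merely a subgroup of $V$, follows in the stated case from the existence of some prequantum lattice $I$, since then $I_\omega\subset I$ exhibits $I_\omega$ as a subgroup of a lattice and hence discrete. For minimality, every prequantum lattice $I$ satisfies $I_\omega\subset I$ by the criterion, so $I_\omega$ is contained in each of them; by the remark following the definition, being a sublattice of every prequantum lattice is precisely the condition that $I_\omega$ be principal. The computation of the $\mathrm{Ext}$ term and this final step are routine, so the technical weight of the proof rests on the coefficient-theorem diagram chase.
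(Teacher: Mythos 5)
Your proof is correct, but it does genuinely more work than the paper's, and the comparison is instructive. The paper's definition of a prequantum lattice already declares the cohomological condition ($[\omega]$ lying in the image of $H^2(M,I)\to H^2(M,V)$) and the period condition ($\langle\omega,\cdot\rangle$ taking values in $I$, i.e.\ $I_\omega\subset I$) to be synonymous --- that is the force of the ``that is'' in the definition --- so the paper's proof of the first claim is a single sentence declaring it a restatement, and the remainder handles principality exactly as you do: $I_\omega$ is discrete because it sits inside the lattice $I$, it is itself a prequantum lattice since trivially $I_\omega\subset I_\omega$, and it lies inside every prequantum lattice, hence is minimal. What your universal-coefficient argument buys is a justification of the equivalence that the paper absorbs into its definition: since $V$ is divisible, $\mathrm{Ext}^1\big(H_1(M,\Z),V\big)=0$, so $\mathrm{ev}_V$ is an isomorphism, and the naturality square lets you pass between the image condition and the period condition in both directions, using surjectivity of $\mathrm{ev}_I$ for the converse. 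Your observation that the map $H^2(M,I)\to H^2(M,V)$ need not be injective --- its kernel contains the torsion coming from $\mathrm{Ext}^1\big(H_1(M,\Z),I\big)$ --- is a genuine and worthwhile point: the paper's hooked arrow is misleading, and an honest proof of the equivalence must, as yours does, argue at the level of images rather than treat that map as an inclusion. The one blemish, which you share with the paper, concerns fullness: prequantum lattices are required to be full, while $I_\omega$ need not span $V$ (it vanishes when $\omega$ is exact), so the phrase ``principal prequantum lattice'' applied to $I_\omega$ must be read as dropping the fullness requirement; your discreteness argument is otherwise exactly what is needed.
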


\begin{proof}
	The first claim is a restatement of the condition for $I\subset V$ to be a prequantum lattice. If $I\subset V$ is a prequantum lattice, then since $I_\omega\subset I$ the additive subgroup $I_\omega\subset V$ is a lattice. The first claim implies that $I_\omega$ is a prequantum lattice and that $I_\omega$ contains every prequantum lattice for $(M,\omega)$.
\end{proof}

\begin{theorem}\label{thm:fundamental_theorem_of_prequantum_lattices}
	If $(M,\omega)$ is transitive and connected, then there is a bijection between equivalence classes of minimal rank prequantizations $(E,\nabla,A)$ on $(M,\omega)$ and bases $\mathcal{B}=\frac{1}{2\pi\i}w(A)$ of prequantum lattices $I\subset V$.
\end{theorem}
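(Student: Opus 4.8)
The plan is to reduce the statement to the structural results already in hand and then supply the one genuinely new ingredient, the classical integrality criterion. By the correspondence between prequantizations and prequantum vector bundles (valid since $(M,\omega)$ is transitive and connected) it suffices to classify minimal rank prequantum vector bundles $(E,\nabla,A)$ up to equivalence. By the classification of such bundles by their weight multisets, two are equivalent precisely when they have the same $w(A)$, and by Proposition \ref{prop:prequantum_vector_bundle_rank} minimality of the rank is equivalent to $w(A)$ being a basis of $\i V^*$, in which case each summand $E_\lambda$ in the splitting of Theorem \ref{thm:prequantization_structure} is a line bundle. Hence the assignment $(E,\nabla,A)\mapsto \mathcal{B}=\frac{1}{2\pi\i}w(A)$ is a well-defined injection from equivalence classes of minimal rank prequantizations into the set of bases of $V^*$; well-definedness and injectivity are both immediate from the weight-multiset classification, and the entire content of the theorem is the identification of its image.

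For the forward direction, I would restrict Lemma \ref{prop:prequantum_curvature} to the line bundle $E_\lambda$, on which $A_v$ acts by the scalar $\lambda_v$, to see that $(E_\lambda,\nabla|_{E_\lambda})$ is a Hermitian line bundle with unitary connection of curvature $F^{\nabla|_{E_\lambda}}=-\lambda\circ\omega$. The classical prequantization (Weil--Kostant) criterion then forces $\frac{\i}{2\pi}F^{\nabla|_{E_\lambda}}$ to be an integral class; pairing against a $2$-cycle $\sigma$ and using $\lambda\in\i V^*$, this reads $\big(\frac{1}{2\pi\i}\lambda\big)\langle\omega,\sigma\rangle\in\Z$. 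Letting $\sigma$ range over $H_2(M,\Z)$ yields $\big(\frac{1}{2\pi\i}\lambda\big)(I_\omega)\subset\Z$, so every element of $\mathcal{B}$ lies in the dual lattice $I_\omega^\vee\subset V^*$. Since $\mathcal{B}$ is a basis of $V^*$ it generates a full lattice contained in $I_\omega^\vee$; dualizing and applying Lemma \ref{lem:prequantum_lattice_condition}, the dual lattice $I\supset I_\omega$ is a prequantum lattice and $\mathcal{B}$ is exactly the associated basis.

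For surjectivity I would run the construction in reverse. Given a basis $\mathcal{B}$ of a prequantum lattice $I$, set $w(A)=2\pi\i\,\mathcal{B}\subset\i V^*$. The prequantum lattice condition (Lemma \ref{lem:prequantum_lattice_condition}) gives $\big(\frac{1}{2\pi\i}\lambda\big)(I_\omega)\subset\Z$ for each $\lambda$, which is precisely the integrality hypothesis needed to construct, using Weil's theorem and connectedness of $M$, a Hermitian line bundle $E_\lambda$ with unitary connection $\nabla_\lambda$ of curvature $-\lambda\circ\omega$. Letting $V$ act on $E_\lambda$ by the scalar $\lambda$ and forming $E=\oplus_\lambda E_\lambda$, $\nabla=\oplus_\lambda\nabla_\lambda$, $A=\oplus_\lambda\lambda$, one checks directly that $F^\nabla=-A_\omega=-\omega$, that $\nabla$ is $V$-linear and unitary, and that $A$ is faithful because $w(A)$ spans $\i V^*$; the rank equals $|w(A)|=\dim V$, so this is a minimal rank prequantum vector bundle recovering $\mathcal{B}$, and the two constructions are mutually inverse.

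The main obstacle is the existence/integrality correspondence in the previous two paragraphs: translating the fiberwise curvature identity $F^{\nabla|_{E_\lambda}}=-\lambda\circ\omega$ into the lattice condition via Weil integrality, and verifying that this condition is not merely necessary but \emph{sufficient} to reassemble a faithful, minimal rank prequantum vector bundle. Some care is also needed with the duality between $V$ and $V^*$, so that the basis $\mathcal{B}\subset V^*$ arising from the weights is correctly matched, through Lemma \ref{lem:prequantum_lattice_condition}, with a prequantum lattice $I\subset V$; everything else is bookkeeping with the structural results of the preceding subsection.
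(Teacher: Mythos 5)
Your proposal is correct and follows essentially the same route as the paper: split the minimal rank bundle into weight line bundles, use the curvature identity $F^{\nabla|_{E_\lambda}}=-\omega_\lambda$ together with Weil integrality to land $I_\omega$ inside the lattice dual to $\frac{1}{2\pi\i}w(A)$, and reverse the construction via Weil's existence theorem applied to each $\omega_\lambda$, forming $\oplus_\lambda(L_\lambda,\nabla^{L_\lambda},\lambda)$. If anything, you are more explicit than the paper on two points it treats tersely---well-definedness and injectivity of the assignment via the weight-multiset equivalence corollary, and the $V$ versus $V^*$ duality bookkeeping matching $\frac{1}{2\pi\i}w(A)$ with a lattice in $V$---so no gap remains.
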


\begin{proof}
	If $(E,\nabla,A)$ is a minimal rank prequantum vector bundle on $(M,\omega)$, then Proposition \ref{prop:prequantum_vector_bundle_rank} provides that $\mathcal{B}^*=\frac{1}{2\pi\i}\,w(A)$ is a basis of $V^*$. For each $\lambda\in w(A)$, Theorem \ref{thm:prequantization_structure} implies that $\omega_\lambda=\lambda\circ\omega\in H^2(M,\C)$ represents the action of the curvature $F^{\nabla|_{E_\lambda}}\in\Omega^2(M,\End\,E_\lambda)$ of the $\lambda$-weight bundle $E_\lambda$, and thus $\langle \omega_\lambda, \Sigma\rangle \in 2\pi\i\,\Z$ for all $\Sigma\in H_2(M,\Z)$, so that $\langle w(A),I_\omega\rangle\subset 2\pi\i\,\Z$. If $I$ is the lattice generated by the dual basis $\mathcal{B}\subset V$, then it follows that $I_\omega$ is a sublattice of $I$, and consequently that $I$ is a prequantum lattice by Lemma \ref{lem:prequantum_lattice_condition}. Since $E=\oplus_{\lambda\in w(A)}E_\lambda$ has minimal rank, the factors $(E_\lambda,\nabla|_{E_\lambda})$ are necessarily line bundles $(L_\lambda,\nabla^{L_\lambda})$.
	
	Now suppose $\mathcal{B}$ is a basis of a prequantum lattice $I\subset V$. If $\lambda\in 2\pi\i\,\mathcal{B}^*$, then $I_\omega\subset I$ implies that $\langle\omega_\lambda,\Sigma\rangle\in 2\pi\i\,\Z$ for all $\Sigma\in H_2(M,\Z)$, and thus there is a line bundle $(L_\lambda,\nabla^{L_\lambda})\to M$ with curvature $F^{\nabla^{L_\lambda}}=-\omega_\lambda$. The sum $(E,\nabla,A)=\oplus_{\lambda\in 2\pi\i\,\mathcal{B}^*}(L_\lambda,\nabla^{L_\lambda},\lambda)$ is a minimal rank prequantum vector bundle on $(M,\omega)$ with $w(A)=2\pi\i\,\mathcal{B}^*$.
\end{proof}

\begin{remark}
	In the symplectic setting, we usually take $V=\R$, $I=\hbar\,\Z$, and $\mathcal{B}=\{\hbar\}$ for some $\hbar>0$. Note that the semiclassical limit $\hbar\to0$ is equivalent to $\hbar\,\Z\to\R$.
\end{remark}

\begin{corollary}
	If $(M,\omega)$ is transitive and connected, then the following are equivalent:
	\begin{enumerate}[i.]
		\item There is a minimal rank prequantization on $(M,\omega)$,
		\item $(M,\omega)$ admits a prequantum lattice $I\subset V$,
		\item $I_\omega$ is a principal prequantum lattice for $(M,\omega)$,
		\item $I_\omega$ is a lattice.
	\end{enumerate}
\end{corollary}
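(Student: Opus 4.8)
The plan is to prove the cyclic chain $(i)\Rightarrow(ii)\Rightarrow(iii)\Rightarrow(iv)\Rightarrow(i)$, since nearly all of the substantive content has already been isolated in Theorem \ref{thm:fundamental_theorem_of_prequantum_lattices} and Lemma \ref{lem:prequantum_lattice_condition}; the corollary is essentially a bookkeeping repackaging of these two results. For $(i)\Rightarrow(ii)$ I would appeal directly to the bijection of Theorem \ref{thm:fundamental_theorem_of_prequantum_lattices}: a minimal rank prequantization $(E,\nabla,A)$ is sent to a basis $\mathcal{B}=\frac{1}{2\pi\i}w(A)$ of a prequantum lattice $I\subset V$, so in particular a prequantum lattice exists, which is exactly $(ii)$.

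For the middle two implications I would invoke Lemma \ref{lem:prequantum_lattice_condition}. If $(M,\omega)$ admits any prequantum lattice $I$, then the ``in this case'' clause of that lemma asserts precisely that $I_\omega$ is a principal prequantum lattice, giving $(ii)\Rightarrow(iii)$. Since a principal prequantum lattice is by definition a lattice, the implication $(iii)\Rightarrow(iv)$ is immediate, with no additional argument required.

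The one step that is not purely formal is $(iv)\Rightarrow(i)$, and this is where I expect the only real subtlety to lie. Assuming $I_\omega$ is a lattice, I would apply the first assertion of Lemma \ref{lem:prequantum_lattice_condition} to the choice $I=I_\omega$: since $I_\omega\subset I_\omega$ holds trivially, the lemma upgrades $I_\omega$ to a bona fide prequantum lattice. Choosing any basis $\mathcal{B}$ of $I_\omega$ and feeding it back through Theorem \ref{thm:fundamental_theorem_of_prequantum_lattices} then produces a minimal rank prequantization with $w(A)=2\pi\i\,\mathcal{B}^*$, establishing $(i)$ and closing the cycle. The point requiring care is the self-application of the lemma: it presumes that $I_\omega$, once it is a lattice, qualifies as a \emph{full} lattice in $V$, so that it really is a prequantum lattice and admits a basis of cardinality $\dim V$, matching the minimal rank condition of Proposition \ref{prop:prequantum_vector_bundle_rank}. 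I would therefore be explicit that the hypothesis in $(iv)$ is read in this full sense, consistent with the standing convention that prequantum lattices are full; granting this, the four conditions are equivalent.
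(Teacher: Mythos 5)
Your overall architecture --- the cycle $(i)\Rightarrow(ii)\Rightarrow(iii)\Rightarrow(iv)\Rightarrow(i)$, driven by Theorem \ref{thm:fundamental_theorem_of_prequantum_lattices} and Lemma \ref{lem:prequantum_lattice_condition} --- is exactly the intended one, and the first three implications are fine. The genuine problem is your resolution of the fullness issue in $(iv)\Rightarrow(i)$. You propose to read ``$I_\omega$ is a lattice'' as ``$I_\omega$ is a \emph{full} lattice,'' so that the lemma can be self-applied with $I=I_\omega$. That reading does not repair the proof; it falsifies the statement. For a transitive, connected, \emph{exact} $V$-symplectic manifold with $\dim V\geq 1$ --- e.g.\ $\Hom(TQ,V)$ with $Q$ connected, which is transitive by Example \ref{eg:transitive_dynamics} and carries a minimal rank prequantization by Example \ref{eg:polymomentum_prequantization} --- one has $I_\omega=0$: conditions (i) and (ii) hold (indeed $I_\omega=0\subset I$ for \emph{every} full lattice $I$, so by Lemma \ref{lem:prequantum_lattice_condition} every full lattice is a prequantum lattice), yet $0$ is not a full lattice. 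So under your reading, (i) holds while (iv) fails, and the equivalence collapses. The paper's own usage also rules out your reading: in the proof of Lemma \ref{lem:prequantum_lattice_condition}, the inference ``since $I_\omega\subset I$ the additive subgroup $I_\omega\subset V$ is a lattice'' only makes sense when ``lattice'' means a discrete subgroup (the $\Z$-span of finitely many $\R$-linearly independent vectors), since a subgroup of a full lattice is generally not full.

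The correct repair of $(iv)\Rightarrow(i)$ is not to strengthen the hypothesis but to insert an extension step. If $I_\omega$ is a lattice in the discrete sense, choose a $\Z$-basis $v_1,\ldots,v_k$ of $I_\omega$; discreteness makes these $\R$-linearly independent, so they complete to an $\R$-basis $v_1,\ldots,v_n$ of $V$, and $I=\Z v_1+\cdots+\Z v_n$ is a full lattice containing $I_\omega$. Lemma \ref{lem:prequantum_lattice_condition} then certifies that $I$ is a prequantum lattice, which gives (ii) directly, and feeding a basis of $I$ through Theorem \ref{thm:fundamental_theorem_of_prequantum_lattices} produces a minimal rank prequantization, which gives (i). With this one change your cycle closes correctly; item (iii) should then be read, as the ``in this case'' clause of the lemma implicitly does, without requiring $I_\omega$ to be full.
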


\begin{corollary}
	If $(M,\omega)$ is transitive and exact, then every lattice $I\subset V$ is a prequantum lattice for $(M,\omega)$. In particular, $(M,\omega)$ possesses a minimal rank prequantization.
\end{corollary}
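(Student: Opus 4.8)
The plan is to reduce everything to the period pairing $\langle\omega,\cdot\rangle:H_2(M,\Z)\to V$ together with the characterization of prequantum lattices in Lemma~\ref{lem:prequantum_lattice_condition}. The one substantive observation is that exactness trivializes the pairing. Writing $\omega=-\d\theta$ for some $\theta\in\Omega^1(M,V)$, I would argue componentwise: for each $\lambda\in V^*$ the real $2$-form $\omega_\lambda=\lambda\circ\omega=-\d(\lambda\circ\theta)$ is exact, so by Stokes' theorem $\langle\omega_\lambda,\Sigma\rangle=0$ for every class $\Sigma\in H_2(M,\Z)$. As this holds for all $\lambda$, the $V$-valued pairing vanishes identically, and hence $I_\omega=\{0\}$.

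With this in hand the first assertion is immediate. By Lemma~\ref{lem:prequantum_lattice_condition}, a full lattice $I\subset V$ is a prequantum lattice for $(M,\omega)$ precisely when $I_\omega\subset I$; since $I_\omega=\{0\}$ is contained in every subgroup of $V$, every full lattice $I\subset V$ qualifies. I would emphasize that this step uses only exactness, not transitivity.

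For the ``in particular'' clause I would first note that $V$ admits a full lattice at all — take the $\Z$-span of any basis of $V$ — so that prequantum lattices exist. Fixing a basis $\mathcal{B}$ of such a lattice and feeding it into the bijection of Theorem~\ref{thm:fundamental_theorem_of_prequantum_lattices}, which applies because $(M,\omega)$ is transitive and connected, then yields a minimal rank prequantization. Equivalently, one can bypass the fundamental theorem and build the prequantum vector bundle by hand: for a lattice basis $b_1,\ldots,b_n$ with dual basis $b^1,\ldots,b^n$ set $\lambda_j=2\pi\i\,b^j$, observe that $\omega_{\lambda_j}=-\d(\lambda_j\circ\theta)$ is exact, and take $E=\bigoplus_j(M\times\C)$ with the unitary connection $\d+\lambda_j\circ\theta$ on the $j$-th summand and $V$ acting there by the scalar weight $\lambda_j$; this is a faithful minimal rank prequantum vector bundle, and the correspondence between prequantum vector bundles and prequantizations converts it into the desired prequantization.

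I do not expect a genuine obstacle: the entire content is the Stokes computation $I_\omega=\{0\}$, after which both claims fall out of Lemma~\ref{lem:prequantum_lattice_condition} and Theorem~\ref{thm:fundamental_theorem_of_prequantum_lattices}. The only points deserving a moment of care are that the components $\omega_\lambda$ being integrated are genuinely exact real forms, so that Stokes applies, and — if one prefers the explicit construction — that each connection $\d+\lambda_j\circ\theta$ is unitary, which holds because $\lambda_j\circ\theta$ is imaginary-valued.
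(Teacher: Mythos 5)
Your proof is essentially correct and follows the same route as the paper: both rest on the observation that exactness forces $I_\omega=0$ (so Lemma~\ref{lem:prequantum_lattice_condition} makes every full lattice prequantum), and both produce the minimal rank prequantization from an explicit trivial bundle. Indeed, your by-hand construction $E=\bigoplus_j(M\times\C)$ with connection $\d+\lambda_j\circ\theta$ and weight $\lambda_j=2\pi\i\,b^j$ on the $j$-th summand is exactly the diagonalized form of the paper's construction, which takes $E=M\times V^\C$, $\nabla_X=\L_X+\theta$ (i.e.\ $\nabla_X = \L_X + A_{\theta(X)}$), and any faithful unitary action of $V$ on $V^\C$.

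There is one slip worth fixing: the corollary does \emph{not} assume $(M,\omega)$ is connected --- compare it with the preceding corollary, where connectedness is an explicit hypothesis --- yet you invoke Theorem~\ref{thm:fundamental_theorem_of_prequantum_lattices} ``because $(M,\omega)$ is transitive and connected,'' and your final appeal to the correspondence between prequantum vector bundles and prequantizations is likewise a citation of a theorem stated only for transitive \emph{connected} manifolds. The paper addresses precisely this point with the phrase ``by considering the connected components of $M$'': one either applies those results component by component, or one notes that the explicit construction (yours or the paper's) never uses connectedness at all --- the curvature identity $F^\nabla=-A_\omega$, unitarity, faithfulness, and the passage $(E,\nabla,A)\mapsto Q_f=\nabla_{X_f}+A_f$ are all local or fiberwise verifications. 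So your second, explicit route is the one to keep; just strike the unwarranted connectedness claim and argue the conversion to a prequantization directly (or componentwise) rather than by citing the correspondence theorem verbatim.
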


\begin{proof}
	This follows as $I_\omega=0$ and by considering the connected components of $M$. Specifically, if $\omega=-\d\theta$, then a minimal rank prequantum vector is given by $E=M\times V^\C$, $\nabla_X = \mathcal{L}_X + \theta$, and any faithful action of $V$ on $V^\C$, where we equip $V^\C$ with a compatible Hermitian structure.
\end{proof}

\begin{example}\label{eg:Lie_group_prequantization}
	Consider a semisimple Lie group $G$ with its standard $\g$-symplectic structure $-\d\theta$, algebra of observables $\O\subset C^\infty(G,\g)\cong\X(G)$ given by the right-invariant vector fields on $G$, and Haar measure $\eta$. The complexified tangent bundle $T^\C G\cong G\times\g^\C$ equipped with the connection $\nabla$ induced by the Killing metric, and with the adjoint action $A_\xi X = \mathrm{ad}_\xi X$, is a prequantum vector bundle for $(G,-\d\theta)$.
\end{example}

\begin{example}\label{eg:polymomentum_prequantization}
	Fix a smooth manifold $Q$ and a vector space $V$, and let $\theta\in\Omega^1\big(\Hom(TQ,V),V\big)$ be given by $\theta(X) = \phi(\pi_*X)$ for $X\in T_\phi\Hom(TQ,V)$. Then $\omega=-\d\theta$ is a $V$-symplectic form on $\Hom(TQ,V)$. Since $\omega$ is exact, the trivial bundle $E=\Hom(TQ,V)\times V^\C$ is a prequantum vector bundle.
\end{example}

\subsection{Products}

Since the Lie algebra $V$ is abelian, every left $V$-module $U$ is naturally a right $V$-module and we may form the tensor product $U\otimes_V U'$. Explicitly, $v\cdot(u\otimes u')=vu\otimes u' = u\otimes vu'$.

Let $(E,\nabla,A)$ and $(E',\nabla',A')$ be prequantum vector bundles on $(M,\omega)$ and $(M',\omega')$, respectively.

\begin{definition}
	We define the \emph{product} of $(E,\nabla,A)$ and $(E',\nabla',A')$ to consist of the $V$-module bundle
	\[
		E\boxtimes_V E' = \pi^*E \otimes_V \pi'^*E' \longrightarrow M\times M',
	\]
	equipped with the connection $\nabla^{E\,\boxtimes_V E'} = \nabla^{\pi^*E}\otimes_V \nabla^{\pi'^*E'}$.
\end{definition}

\begin{lemma}\label{lem:tensor_weights}
	We have $w(A\boxtimes_V A') = w(A) \,\cap\, w(A')$.
\end{lemma}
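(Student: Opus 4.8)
The plan is to reduce the statement to a single fiberwise computation and then analyze the defining relation of $\otimes_V$ on weight spaces. By Theorem \ref{thm:prequantization_structure}, the weight sets $w(A)=w(A_x)$ and $w(A')=w(A'_{x'})$ are independent of the base points $x\in M$ and $x'\in M'$. Moreover, the induced action $A\boxtimes_V A'$ is parallel, being built from the parallel actions $A$ and $A'$, so the argument of Theorem \ref{thm:prequantization_structure} applies verbatim to $E\boxtimes_V E'$ and shows that its weight set is determined by any single fiber $(E\boxtimes_V E')_{(x,x')} = E_x\otimes_V E'_{x'}$. It therefore suffices to compute the weights of this one fiber.

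First I would diagonalize. Since the $V$-actions $A_x$ and $A'_{x'}$ are unitary representations of the abelian algebra $V$, the fibers split as $E_x = \oplus_\lambda (E_x)_\lambda$ and $E'_{x'} = \oplus_\mu (E'_{x'})_\mu$, whence the tensor product over $\C$ decomposes as $E_x\otimes_\C E'_{x'} = \oplus_{\lambda,\mu}\,(E_x)_\lambda\otimes_\C (E'_{x'})_\mu$. The product $\otimes_V$ is the quotient of $\otimes_\C$ by the relations $A_v\sigma\otimes\tau = \sigma\otimes A'_v\tau$ for $v\in V$. The key structural observation is that each such generator lies inside a single bigraded summand $(E_x)_\lambda\otimes_\C (E'_{x'})_\mu$, so the relations are homogeneous with respect to the $(\lambda,\mu)$-bigrading and the quotient may be computed summand by summand.

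Next I would evaluate the relation on each graded piece. For $\sigma\in (E_x)_\lambda$ and $\tau\in (E'_{x'})_\mu$ we have $A_v\sigma\otimes\tau = \lambda_v(\sigma\otimes\tau)$ and $\sigma\otimes A'_v\tau = \mu_v(\sigma\otimes\tau)$, so the relation reads $(\lambda_v-\mu_v)(\sigma\otimes\tau)=0$ for all $v\in V$. If $\lambda\neq\mu$ then, regarding $\lambda,\mu$ as elements of $\i V^*$, some $v$ gives $\lambda_v\neq\mu_v$ and the entire summand is killed; if $\lambda=\mu$ the relation is vacuous and the summand $(E_x)_\lambda\otimes_\C (E'_{x'})_\lambda$ survives, carrying the action $v\cdot(\sigma\otimes\tau)=\lambda_v(\sigma\otimes\tau)$, that is, weight $\lambda$. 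Hence $E_x\otimes_V E'_{x'}\cong\oplus_\lambda (E_x)_\lambda\otimes_\C (E'_{x'})_\lambda$.

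Finally I would read off the weights: a summand contributes precisely when both $(E_x)_\lambda\neq 0$ and $(E'_{x'})_\lambda\neq 0$, i.e.\ when $\lambda\in w(A_x)\cap w(A'_{x'})$, and in that case it carries weight $\lambda$. This gives $w(A\boxtimes_V A') = w(A)\cap w(A')$. I do not expect a serious obstacle here; the one point deserving care is the homogeneity of the defining relations under the bigrading, which is exactly what licenses the summand-by-summand analysis, together with the elementary remark that $\lambda=\mu$ in $\i V^*$ is equivalent to $\lambda_v=\mu_v$ for every $v\in V$.
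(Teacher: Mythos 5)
Your proof is correct and takes essentially the same route as the paper's: both decompose the fibers into weight spaces and use the defining relation of $\otimes_V$, evaluated on weight vectors, to conclude that every off-diagonal summand $(E_x)_\lambda\otimes (E'_{x'})_\mu$ with $\lambda\neq\mu$ is killed, leaving exactly the diagonal summands indexed by $w(A)\cap w(A')$. The only difference is that you spell out two points the paper leaves implicit — the reduction to a single fiber via parallelism of the action, and the homogeneity of the relations under the $(\lambda,\mu)$-bigrading that justifies computing the quotient summand by summand — which is added rigor, not a different argument.
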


\begin{proof}
	Since
	\[
		\lambda_v \;\sigma\otimes \sigma' =  v\cdot (\sigma\otimes\sigma') = \lambda'_v \;\sigma\otimes\sigma',
	\]
	for all $v\in V$, $\lambda\in w(A)$, $\lambda'\in w(A')$, $\sigma\in\pi^*(E_x)_\lambda$, and $\sigma'\in\pi'^*(E'_x)_{\lambda'}$, it follows that
	\[
		E\boxtimes_V E' = \bigoplus_{\substack{\lambda\in w(A) \\ \lambda'\in w(A')}} E_\lambda\boxtimes_V E'_{\lambda'}  = \bigoplus_{\lambda\in w(A)\cap w(A')} E_\lambda\boxtimes_V E'_\lambda.
	\]
\end{proof}

\begin{proposition}
	If $(E,\nabla,A)$ and $(E',\nabla',A)$ have minimal rank, then $E\boxtimes_V E'$ is a prequantum vector bundle on $(M\times M',\omega+\omega')$ if and only if $w(A)=w(A')$. In this case, $E\boxtimes_V E'$ has minimal rank.
\end{proposition}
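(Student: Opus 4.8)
The plan is to observe that, because both factors carry the \emph{same} coefficient space $V$, the curvature condition for $E\boxtimes_V E'$ is satisfied automatically, so that the only substantive requirement is the faithfulness of the induced $V$-module structure $A\boxtimes_V A'$. First I would record that, by Proposition \ref{prop:prequantum_vector_bundle_rank}, the minimal rank hypothesis forces each of $w(A)$ and $w(A')$ to be a basis of $\i V^*$; in particular $|w(A)|=|w(A')|=\dim V$ and every weight space of $E$ and of $E'$ is a line.

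Next I would verify the curvature condition. The tensor connection $\nabla^{\pi^*E}\otimes_V\nabla^{\pi'^*E'}$ is well-defined precisely because $\nabla$ and $\nabla'$ are $V$-linear, so it descends through the identification $v\sigma\otimes\sigma'=\sigma\otimes v\sigma'$. Its curvature is computed by the usual Leibniz rule, $F(\sigma\otimes\sigma')=(F^{\pi^*E}\sigma)\otimes\sigma'+\sigma\otimes(F^{\pi'^*E'}\sigma')$, the cross terms cancelling as always. Using $F^\nabla=-\omega$ and $F^{\nabla'}=-\omega'$ together with the identity $A_v\sigma\otimes\sigma'=v\cdot(\sigma\otimes\sigma')$ defining the action on the balanced product, the two terms collapse to $-(\pi^*\omega+\pi'^*\omega')(X,Y)\cdot(\sigma\otimes\sigma')$, which is exactly $F^{E\boxtimes_V E'}=-(\omega+\omega')$ for the product $V$-symplectic form. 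The Hermitian metric and the unitarity of the connection are inherited from those of the factors, so these conditions impose no further constraint.

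It then remains to isolate faithfulness. By Lemma \ref{lem:tensor_weights} the weight set of the product module is $w(A)\cap w(A')$, and by the spanning argument of Proposition \ref{prop:prequantum_vector_bundle_rank} the product $V$-module is faithful if and only if $w(A)\cap w(A')$ spans $\i V^*$. Since $w(A)$ is already a basis of $\i V^*$ and $w(A)\cap w(A')\subseteq w(A)$, the intersection can span only if it equals $w(A)$; by the equal cardinalities $|w(A)|=|w(A')|=\dim V$ this occurs precisely when $w(A)=w(A')$, giving the asserted equivalence. Finally, when $w(A)=w(A')$, Lemma \ref{lem:tensor_weights} yields $E\boxtimes_V E'=\bigoplus_{\lambda\in w(A)}E_\lambda\boxtimes_V E'_\lambda$, a sum of $\dim V$ factors each of which is a tensor product of two line bundles of the same weight $\lambda$; since balancing two lines of equal weight over $V$ imposes no relation, each factor is again a line bundle, so the product has rank $\dim V$ and is of minimal rank.

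The main obstacle I anticipate is bookkeeping the $V$-balanced tensor product carefully enough to confirm that the curvature of the product connection is governed by the single copy of $V$, so that it reproduces $\omega+\omega'$ rather than a doubled or mismatched coefficient, and to check that the Hermitian structure genuinely descends to $\otimes_V$ on matching weight spaces. Once the curvature is seen to be automatic, the equivalence reduces cleanly to the elementary fact that a subset of a basis spans only if it is the whole basis.
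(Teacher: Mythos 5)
Your proof is correct and follows essentially the same route as the paper's: Lemma \ref{lem:tensor_weights} reduces everything to weight sets, faithfulness of $A\boxtimes_V A'$ is equivalent to $w(A)\cap w(A')$ spanning $\i V^*$, and the cardinality count $|w(A)|=|w(A')|=\dim V$ forces $w(A)=w(A')$, with minimal rank following because the balanced product of equal-weight line bundles is again a line bundle. Your explicit check that the product connection has curvature $-(\pi^*\omega+\pi'^*\omega')$ is a detail the paper leaves implicit, but it is a verification of the same construction rather than a different approach.
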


\begin{proof}
	If $E\boxtimes_V E'$ is a prequantum vector bundle, then Lemma \ref{lem:tensor_weights} implies that $|w(A)\cap w(A')| = |w(A\boxtimes_V A')|\geq \dim V$. Since $|w(A)|=|w(A')|=\dim V$, it follows that $w(A)=w(A')$.
	
	Conversely, if $w(A)=w(A')$, then Lemma \ref{lem:tensor_weights} yields that $w(A\boxtimes_V A')=w(A)$ is a basis of $V^*$, from which it readily follows that the action $A\boxtimes_V A'$ is faithful, and consequently that $E\boxtimes_VE'$ is a prequantum vector bundle.
	
	In either case, $\dim E\boxtimes_VE'=|w(A)\cap w(A')|=\dim V$, so that $E\boxtimes_VE'$ has minimal rank.
\end{proof}

We obtain an important corollary.

\begin{corollary}
	The $k$-fold $V$-linear tensor bundle $E^{\otimes k}$ is a prequantum vector bundle for $(M,k\omega)$ for each integer $k>0$.
\end{corollary}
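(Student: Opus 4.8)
The plan is to verify the three defining properties of a prequantum vector bundle directly for $E^{\otimes k}$ over $(M,k\omega)$, the essential point being that the $V$-linear tensor product is what produces the factor of $k$. First I would note that $(M,k\omega)$ is again $V$-symplectic, since $k\omega$ is closed and, for $k>0$, nondegenerate. I would then equip $E^{\otimes k}=E\otimes_V\cdots\otimes_V E$ with the tensor product Hermitian metric and the induced tensor product connection $\nabla^{\otimes k}$, which is unitary because $\nabla$ is, and with the common fiber action $A$, which by the defining relation of $\otimes_V$ acts identically through any single factor. Iterating Lemma \ref{lem:tensor_weights} gives $w(A^{\otimes k})=w(A)$, and since this weight set spans $\i V^*$ (as in the proof of Proposition \ref{prop:prequantum_vector_bundle_rank}), the action $A^{\otimes k}$ is faithful; hence $E^{\otimes k}$ is a faithful Hermitian $V$-module bundle with a compatible unitary connection.

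The crux is the curvature. The standard formula for the curvature of a tensor product connection gives
\[
	F^{\nabla^{\otimes k}}(X,Y)(\sigma_1\otimes\cdots\otimes\sigma_k)=\sum_{i=1}^k \sigma_1\otimes\cdots\otimes F^\nabla(X,Y)\sigma_i\otimes\cdots\otimes\sigma_k,
\]
and by Lemma \ref{prop:prequantum_curvature} each $F^\nabla(X,Y)\sigma_i=-\omega(X,Y)\cdot\sigma_i$. Because the tensor product is taken over $V$, applying the fiber action $\omega(X,Y)\cdot$ to the $i$-th factor equals applying it to the entire tensor $\sigma_1\otimes\cdots\otimes\sigma_k$, independently of $i$. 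Each of the $k$ summands is therefore $-\omega(X,Y)\cdot(\sigma_1\otimes\cdots\otimes\sigma_k)$, and summing yields $F^{\nabla^{\otimes k}}(X,Y)=-k\,\omega(X,Y)\cdot=-(k\omega)(X,Y)\cdot$. This is precisely the curvature condition $F^{\nabla^{\otimes k}}=-k\omega$, so $E^{\otimes k}$ is a prequantum vector bundle on $(M,k\omega)$.

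The step requiring the most care is this collapse of the $k$ summands to a single endomorphism: it is the source of the factor of $k$ and relies entirely on the $V$-linearity built into $\otimes_V$, namely $v\cdot(u\otimes u')=vu\otimes u'=u\otimes vu'$, rather than on any property of the ordinary tensor product. Equivalently, one could deduce the result from the preceding Proposition by noting that $E^{\otimes k}$ is the restriction along the diagonal $\Delta:M\to M^k$ of the $k$-fold external product $E\boxtimes_V\cdots\boxtimes_V E$ over $(M^k,\omega+\cdots+\omega)$, under which $\omega+\cdots+\omega$ pulls back to $k\omega$; the direct computation above has the advantage of not invoking the minimal-rank hypothesis of that Proposition.
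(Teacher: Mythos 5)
Your proof is correct, but your primary argument is not the one the paper uses: the paper's entire proof is the one line you relegate to your closing remark, namely identifying $(M,k\omega)$ with the diagonal of the $k$-fold product $(M^k,\omega+\cdots+\omega)$ and pulling back the external product $E\boxtimes_V\cdots\boxtimes_V E$, which is a prequantum vector bundle by the preceding product Proposition. Your main argument instead verifies the definition directly: the tensor-product curvature formula produces $k$ summands, and the $V$-linearity relation $v\cdot(u\otimes u')=vu\otimes u'=u\otimes vu'$ collapses every summand to the same endomorphism $-\omega(X,Y)\,\cdot$, giving $F^{\nabla^{\otimes k}}=-(k\omega)(X,Y)\,\cdot$, while faithfulness follows from Lemma \ref{lem:tensor_weights} and the fact that $w(A)$ spans $\i V^*$ (Proposition \ref{prop:prequantum_vector_bundle_rank}). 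Both routes are sound, and your closing observation identifies a genuine payoff of the direct computation: the product Proposition is stated under a minimal-rank hypothesis, whereas the Corollary carries no such hypothesis, so the paper's one-line deduction is, as literally stated, only complete in the minimal-rank case — although the relevant half of the Proposition's proof needs only that $w(A)$ spans $\i V^*$ rather than forms a basis, so the gap is cosmetic rather than substantive. Your computation also makes transparent exactly where the factor of $k$ arises (from the collapse of the $k$ curvature summands under $\otimes_V$), which the diagonal argument hides inside the pullback of $\omega+\cdots+\omega$; the paper's route, in exchange, is shorter and reuses the product machinery it has just built.
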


\begin{proof}
	This follows by identifying $(M,k\omega)$ as the diagonal of the $k$-fold product of $(M,\omega)$ with itself.
\end{proof}

\begin{definition}
	We call the collection $w(A)\subset V^*$ the \emph{type} of $(E,\nabla,A)$, and we say that $(E,\nabla_X+A)$ and $(E',\nabla'_X+A')$ are \emph{simultaneous prequantizations} when $w(A)=w(A')$. 
\end{definition}

\begin{remark}
	The type of a prequantization generalizes the constant $\hbar>0$ in the symplectic setting.
\end{remark}


\subsection{The Local Polysymplectic Case}

The definitions of prequantization and prequantum vector bundle extend naturally to the context of local $V$-symplectic manifolds. Fix a local $V$-symplectic manifold $(M,\omega)$ with bundle of coefficients $(\tilde{V},\nabla')$. Since $Q$ is first-order, there is a natural and well-defined action of $V_x$ on $E_x$ given by
\[
	v \cdot \psi_x = (Q_f \psi)_x
\]
for $v\in \tilde{V}_x$, $\psi\in\mathcal{H}$, and $f\in\O$ with $f(x)=v$ and $(\d^{\nabla'} f)_x =0$.

The results of Subsection \ref{subsec:construction_of_prequantizations} which precede Theorem \ref{thm:prequantization_structure} extend without modification to the local context. However, we have only a local splitting theorem for the prequantum vector bundle $(E,\nabla,A)$. In the absence of a fiber basis of parallel sections of $\tilde{V}\to M$, a role played by the constant functions in the global polysymplectic context, the factors of the splitting $E_x=\oplus_{\lambda\in w(A_x)}(E_x)_\lambda$ may be rearranged under parallel translation along nontrivial loops.

\begin{theorem}\label{cor:local_prequantization_structure}
	If $(M,\omega)$ is a transitive and connected local $V$-symplectic manifold with prequantum vector bundle $(E,\nabla,A)$, then the action of the holonomy group $\mathrm{Hol}_x^\nabla$ preserves the weight-space decomposition $E_x=\oplus_{\lambda\in w(A_x)} (E_x)_\lambda$ up to permutation of the factors.
\end{theorem}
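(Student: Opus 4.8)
The plan is to reduce the statement to a transformation law for weight spaces under parallel transport, exactly as in the proof of Theorem \ref{thm:prequantization_structure}, the only new ingredient being that the flat bundle $\tilde V\to M$ now carries a possibly nontrivial parallel transport of its own. In the global case the weights are fixed functionals on the constant space $V$, and since $A$ is parallel, parallel transport carries $(E_x)_\lambda$ to $(E_y)_\lambda$, fixing each factor; locally the weight $\lambda$ is a functional on the fiber $\tilde V_x$, and transporting a weight vector from $x$ to $y$ must transport this functional along $\tilde V$ as well, which is precisely what produces the permutation.

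First I would record the precise sense in which $A$ is parallel. By the local analogue of Proposition \ref{prop:induced_V-linear_connection}, which the preceding discussion asserts extends without modification, $A$ is a parallel section of $\Hom(\tilde V,\End E)$ for the connection induced by $\nabla'$ on $\tilde V$ and $\nabla$ on $E$. Unwinding parallelism into parallel transport, this says that for any path $\gamma$ from $x$ to $y$ the transports $\tau^E_\gamma$ and $\tau^{\tilde V}_\gamma$ intertwine the fiber actions:
\[
  A_y\big(\tau^{\tilde V}_\gamma v\big)\big(\tau^E_\gamma \sigma\big) = \tau^E_\gamma\big(A_x(v)\,\sigma\big), \qquad v\in\tilde V_x,\ \sigma\in E_x.
\]
Then for a weight vector $\sigma\in(E_x)_\lambda$, substituting $A_x(v)\sigma=\lambda(v)\sigma$ and writing $w=\tau^{\tilde V}_\gamma v$ gives $A_y(w)(\tau^E_\gamma\sigma)=\lambda\big((\tau^{\tilde V}_\gamma)^{-1}w\big)\,\tau^E_\gamma\sigma$; that is, $\tau^E_\gamma$ maps $(E_x)_\lambda$ isomorphically onto the weight space $(E_y)_{\lambda\circ(\tau^{\tilde V}_\gamma)^{-1}}$.

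Next I would specialize to loops based at $x$. Writing $h=\tau^E_\gamma\in\mathrm{Hol}^\nabla_x$ and $h'=\tau^{\tilde V}_\gamma$, the previous step yields $h\big((E_x)_\lambda\big)=(E_x)_{\lambda\circ(h')^{-1}}$. Since $h$ is invertible and $E_x$ is the orthogonal sum of its finitely many weight spaces by unitarity of $A_x$, the assignment $\lambda\mapsto\lambda\circ(h')^{-1}$ must carry $w(A_x)$ bijectively onto itself; hence $h$ permutes the factors of $E_x=\oplus_{\lambda\in w(A_x)}(E_x)_\lambda$. As $\gamma$ ranges over loops this holds for every element of $\mathrm{Hol}^\nabla_x$, which is the assertion.

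The main obstacle is the correct formulation and verification of the intertwining identity above in the presence of nontrivial $\tilde V$-holonomy. In the global setting this identity is invisible because $\tau^{\tilde V}_\gamma$ is the identity on the constant sections, so the weight functional is literally preserved; locally one must track both transports simultaneously and confirm that parallelism of $A$ as a section of $\Hom(\tilde V,\End E)$ produces exactly the twist by $(\tau^{\tilde V}_\gamma)^{-1}$ on the dual. Once this is in hand the permutation claim is immediate, and the remaining steps are the same bookkeeping as in Theorem \ref{thm:prequantization_structure}.
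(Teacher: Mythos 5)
Your proposal is correct and takes essentially the same approach as the paper: the paper's proof likewise unwinds the parallelism of $A\in\Hom(\tilde{V},\End\,E)$ into the intertwining identity $A_v(\tau_\gamma\sigma) = \tau_\gamma(A_{\tau_\gamma^{-1}v}\sigma) = \lambda_{\tau_\gamma^{-1}v}(\tau_\gamma\sigma)$ for a loop $\gamma$ at $x$, concludes that $\tau_\gamma\sigma$ is a $(\tau_\gamma^{-1})^*\lambda$-weight vector, and deduces the permutation of weights and weight spaces. Your only additions are notational care (distinguishing $\tau^E_\gamma$ from $\tau^{\tilde V}_\gamma$, which the paper conflates into a single symbol) and the explicit check that $\lambda\mapsto\lambda\circ(\tau^{\tilde V}_\gamma)^{-1}$ maps $w(A_x)$ bijectively onto itself, a point the paper leaves implicit.
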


\begin{proof}
	The proof is similar to that of Theorem \ref{thm:prequantization_structure}. Let $\gamma:I\to M$ be a loop based at $x$, $\lambda\in w(A_x)$ a weight of $A_x$, and $\sigma\in E_x$ a $\lambda$-weight vector. The parallelism of $A\in\Hom(\tilde{V},\End\,E)$ implies that
	\[
		A_v(\tau_\gamma\sigma) = \tau_\gamma(A_{\tau_\gamma^{-1}v}\sigma) = \lambda_{\tau_\gamma^{-1}v}(\tau_\gamma\sigma),
	\]
	so that $\tau_\gamma\sigma$ is a $(\tau_\gamma^{-1})^*\lambda$-weight vector for $A_x$. In particular, $\gamma$ induces a permutation of the weights $\lambda\in w(A_x)$ and a corresponding permutation of the weight spaces $(E_x)_\lambda$.
\end{proof}

\begin{corollary}
	The fundamental group $\pi_1(M,x)$ acts naturally on the set of weights $w(A_x)$.
\end{corollary}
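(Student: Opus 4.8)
The plan is to read off the group action directly from the weight permutation already constructed in the proof of Theorem~\ref{cor:local_prequantization_structure}, and then to upgrade that construction from loops to homotopy classes. Recall that the proof of Theorem~\ref{cor:local_prequantization_structure} assigns to each loop $\gamma$ based at $x$ the map $\lambda\mapsto(\tau_\gamma^{-1})^*\lambda$ on $w(A_x)$, where $\tau_\gamma$ is the $\nabla'$-parallel transport in the coefficient bundle $\tilde V$ along $\gamma$. My first step is simply to record that, since $\tau_\gamma\in\Aut\,\tilde{V}_x$ is a linear isomorphism and the image $(\tau_\gamma^{-1})^*\lambda$ of each weight is again a weight (its weight space $\tau_\gamma((E_x)_\lambda)$ being nonzero), the assignment $\lambda\mapsto(\tau_\gamma^{-1})^*\lambda$ is a bijection of the finite set $w(A_x)$, i.e.\ an element of $\mathrm{Sym}(w(A_x))$.

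The essential step, and the one I expect to carry the argument, is the observation that although the transport appearing in Theorem~\ref{cor:local_prequantization_structure} acts on both $\tilde{V}$ and $E$, the induced permutation of weights is expressed \emph{solely} in terms of the transport on $\tilde{V}$. Since $(\tilde{V},\nabla')$ is flat by the definition of a local $V$-symplectic manifold, the holonomy $\tau_\gamma\in\Aut\,\tilde{V}_x$ depends only on the class $[\gamma]\in\pi_1(M,x)$, so the permutation $\lambda\mapsto(\tau_\gamma^{-1})^*\lambda$ is homotopy-invariant and descends to a well-defined map $\pi_1(M,x)\to\mathrm{Sym}(w(A_x))$. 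I would stress that the connection $\nabla$ on $E$ is genuinely non-flat—its curvature is $-\omega$ by Lemma~\ref{prop:prequantum_curvature}—so the transport on $E$ really does depend on the chosen representative of $[\gamma]$; the whole point is that the weight permutation factors through the flat part $\tilde{V}$ and therefore sees only $[\gamma]$.

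It then remains to verify the homomorphism property and naturality. This follows from the standard fact that parallel transport of the flat bundle $(\tilde{V},\nabla')$ furnishes the holonomy homomorphism $\pi_1(M,x)\to\Aut\,\tilde{V}_x$; passing to the contragredient representation gives a linear action of $\pi_1(M,x)$ on $\tilde{V}_x^*$, and restricting to the invariant finite subset $w(A_x)\subset\i\tilde{V}_x^*$ yields the desired action $\pi_1(M,x)\to\mathrm{Sym}(w(A_x))$ (reconciling the left/right ordering convention, which at worst replaces the action by its inverse). Naturality under change of basepoint follows from the usual conjugation behavior of $\pi_1$ and of holonomy under a choice of connecting path. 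I do not anticipate any difficulty beyond this bookkeeping; the single genuine subtlety is the flatness observation of the middle paragraph, which is what makes the action well defined on $\pi_1$ rather than only on the loop space.
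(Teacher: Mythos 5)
Your proof is correct and takes essentially the same route the paper intends: the corollary is read off directly from the proof of Theorem~\ref{cor:local_prequantization_structure}, since the weight permutation $\lambda\mapsto(\tau_\gamma^{-1})^*\lambda$ factors through the parallel transport of the flat coefficient bundle $(\tilde{V},\nabla')$ and therefore depends only on $[\gamma]\in\pi_1(M,x)$. Your observation that $\nabla$ on $E$ is genuinely non-flat (curvature $-\omega$) while the permutation sees only the flat part is precisely the one subtlety the paper leaves implicit, and you have handled it correctly.
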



\section{Polarized Quantization}\label{sec:polarized_quantization}

In this section, we address quantization with respect to real and complex polarizations, and conclude with an investigation of the interaction between quantization and reduction. 

Let $(M,\omega)$ be a $V$-symplectic manifold. Since our focus is on the geometry of prequantum vector bundles, we will not assume that $(M,\omega)$ is transitive, nor that it possesses an invariant measure.

\begin{definition}
	A \emph{polarization} of $(M,\omega)$ is an integrable Lagrangian distribution of the complexified tangent bundle $\mathcal{P}\subset T^\C\!M$. We say that $\mathcal{P}$ is \emph{real} when $\mathcal{P}=\overline{\mathcal{P}}$, and \emph{complex} when $\mathcal{P}\cap\overline{P}=0$.
\end{definition}

In the presence of a prequantum vector bundle $(E,\nabla,A)$ on $(M,\omega)$, we make the following definition.

\begin{definition}
	The \emph{space of $\mathcal{P}$-polarized quantum states} $\H_{\mathcal{P}}$ consists of those sections $\psi\in\H$ which are parallel along $\mathcal{P}$. That is, $\H_{\mathcal{P}}= \{\psi\in\H \,|\, \nabla_X\psi = 0 \text{ for all }X\in\mathcal{P}\}$.
\end{definition}

Throughout this section, we frequently extend $\R$-linear $V$-symplectic structures on a vector space $U$ to $\C$-linear $V$-symplectic structures on the complexification $U^\C=U\otimes_\R \C$ in the natural way without comment.


\subsection{Real Polarizations}

Let us briefly consider a real polarization $\mathcal{P}\subset T^\C M$ arising from Lagrangian foliations $\mathcal{F}$ of $(M,\omega)$. The space of quantum states $\mathcal{H}_{\mathcal{F}}$ is in bijective correspondence with the space of sections of $L/\mathcal{P}$ over the leaf space $M/\mathcal{F}$. We will denote $\H_{\mathcal{P}}$ by $\H_{\mathcal{F}}$.

\begin{example}
	Let $T$ be a maximal torus of the compact semisimple Lie group $G$ with Lie algebra $\t\subset\g$. Thus, $T$ is a Lagrangian submanifold of $G$. Since the left regular representation of $G$ on itself is $\g$-symplectic, it follows that $\mathcal{F}=(gT)_{g\in G}$ defines a Lagrangian foliation of $(G,-\d\theta)$, with associated polarization $G\cdot\t\subset TG$, and leaf space $T\backslash G$. It follows that $\mathcal{H}_{\mathcal{F}}$ is isomorphic to the space of $T$-invariant complex vector fields $\X^\C(G)_T$.
\end{example}

\begin{example}
	The fibers of $\Hom(TQ,V)\to Q$ define a Lagrangian foliation $\mathcal{F}$ of $\Hom(TQ,V)$ with leaf space isomorphic to $Q$. Thus, $\mathcal{H}_{\mathcal{F}}$ is naturally isomorphic to $C^\infty(Q,V^\C)$.
\end{example}

\begin{remark}
	We note that real polarizations have appeared previously in the context of the $k$-symplectic formalism \cite{Awane08,AwaneBanaliAmine18}.
\end{remark}


\subsection{Complex Polarizations}

Let us now equip $(M,\omega)$ with a compatible almost complex structure $J\in\End\,TM$. That is, $J^2=-1$ on fibers and $J^*\omega=\omega$.

\begin{definition}
	A linear complex structure $J\in\End\,U$ is said to be \emph{compatible} with $(U,\omega)$ when $J$ is a linear $V$-symplectomorphism of $(U,\omega)$, and we extend this language in the natural way to a $V$-symplectic manifold $(M,\omega)$ with almost complex structure $J\in\End\,TM$.
\end{definition}

\begin{lemma}\label{lem:complex_lagrangian_eigenspace_decomposition}
	The $\pm\i$-eigenspaces $U_\pm$ of $J$ are Lagrangian subspaces of $U^\C$.
\end{lemma}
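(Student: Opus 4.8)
The plan is to verify the two defining properties of a Lagrangian subspace — isotropy and self-orthogonality — directly from the compatibility relation $J^*\omega=\omega$ and the nondegeneracy of $\omega$, working throughout in the complexification, where $\omega$ is extended $\C$-bilinearly to a $V^\C$-valued form and $U^\C=U_+\oplus U_-$ is the eigenspace decomposition of $J$.

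First I would establish isotropy. Since the extension of $\omega$ is $\C$-bilinear and $J$ acts as $\pm\i$ on $U_\pm$, the compatibility relation gives, for $x,y\in U_+$,
\[
	\omega(x,y) = \omega(Jx,Jy) = \omega(\i x,\i y) = \i^2\,\omega(x,y) = -\omega(x,y),
\]
so that $\omega(x,y)=0$; the identical computation applies on $U_-$. Hence both eigenspaces are isotropic. Next I would show that $U_+$ coincides with its own $\omega$-orthogonal complement $(U_+)^\omega=\{z\in U^\C \,|\, \omega(z,w)=0 \text{ for all } w\in U_+\}$. Isotropy already yields $U_+\subseteq(U_+)^\omega$. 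For the reverse inclusion I would take $z\in(U_+)^\omega$ and decompose $z=z_++z_-$ along $U^\C=U_+\oplus U_-$. The component $z_+$ pairs trivially with $U_+$ by isotropy, so $\omega(z_-,w)=0$ for every $w\in U_+$; combined with the isotropy of $U_-$, this gives $\omega(z_-,\cdot)=0$ on all of $U_+\oplus U_-=U^\C$, whence $z_-=0$ by nondegeneracy and $z\in U_+$. Thus $(U_+)^\omega=U_+$, so $U_+$ is Lagrangian, and applying complex conjugation — which interchanges $U_+$ and $U_-$ — gives the same conclusion for $U_-$.

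The step requiring care is the maximality (self-orthogonality) argument. Unlike the classical symplectic case, where one may simply invoke a half-dimension count, the $V$-symplectic compatibility relation says nothing about the cross-pairing between $U_+$ and $U_-$: there the analogous computation $\omega(Jz_-,Jw)=\omega(z_-,w)$ collapses to a tautology. It is therefore essential to feed in the full strength of the nondegeneracy of $\omega$ on $U^\C$ to force the $U_-$-component of an element of $(U_+)^\omega$ to vanish, and this is where I expect the only genuine subtlety to lie.
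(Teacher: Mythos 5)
Your proof is correct and follows essentially the same route as the paper: the identical isotropy computation $\omega(u,u')=\omega(Ju,Ju')=-\omega(u,u')$, followed by maximality via the decomposition $U^\C=U_+\oplus U_-$ and nondegeneracy. The paper merely compresses your elementwise argument into subspace notation, observing that $U_+^\omega\cap U_-\subset U_+^\omega\cap U_-^\omega=(U_++U_-)^\omega=0$, which is exactly your step forcing $z_-=0$.
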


\begin{proof}
	If $u,u'\in U_\pm$, then $\omega(u,u') = \omega(Ju,Ju') = -\omega(u,u')$, so $U_\pm\subset U_\pm^\omega$. It follows that both $U_+^\omega\cap U_-$ and $U_+\cap U_-^\omega$ are subspaces of $U_+^\omega\cap U_-^\omega = (U_++U_-)^\omega = 0$, and consequently that $U_\pm=U_\pm^\omega$.
\end{proof}

\begin{corollary}
	If $(E,\nabla,A)$ is a prequantum vector bundle on $(M,\omega)$, and if $J$ is a compatible complex structure on $(M,\omega)$, then
	\begin{enumerate}[i.]
		\item $T^{0,1}\!M$ is a polarization of $(M,\omega)$,
		\item $E$ is holomorphic and $\nabla$ is the Chern connection.
	\end{enumerate}
\end{corollary}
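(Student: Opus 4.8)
The plan is to prove the two claims in turn, with the Lagrangian property of $T^{0,1}M$ and the defining curvature condition $F^\nabla=-\omega$ of the prequantum vector bundle supplying all the geometric input.

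For (i): The distribution $T^{0,1}M$ is the $-\i$-eigenbundle of $J$, hence a smooth subbundle of $T^\C M$ of constant rank. Applying Lemma \ref{lem:complex_lagrangian_eigenspace_decomposition} fiberwise, with $U=T_xM$ and $U_-=T^{0,1}_xM$, shows that $T^{0,1}M$ is Lagrangian for the $\C$-bilinear extension of $\omega$. Integrability is the involutivity of $T^{0,1}M$, which holds because $J$ is a genuine complex structure (Newlander--Nirenberg), so that $M$ is a complex manifold. Thus $T^{0,1}M$ is an integrable Lagrangian distribution, that is, a polarization.

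For (ii): The crux is that the $(0,2)$-component of the curvature vanishes. Writing the curvature condition in the form $F^\nabla(X,Y)\psi=-\omega(X,Y)\cdot\psi=-A_{\omega(X,Y)}\psi$, I would take $X,Y\in T^{0,1}M$ and use the Lagrangian property from (i) to obtain $\omega(X,Y)=0\in V$, whence $F^\nabla(X,Y)=-A_0=0$. Since the $(0,2)$-part of $F^\nabla$ is precisely its restriction to $\Lambda^2 T^{0,1}M$, this yields $(F^\nabla)^{0,2}=0$. With this in hand, the operator $\bar\partial_E:=\nabla^{0,1}$ satisfies $\bar\partial_E^2=0$, and I would invoke the Koszul--Malgrange integrability theorem to produce a unique holomorphic structure on $E$ whose local holomorphic sections are $\ker\bar\partial_E$. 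As $\nabla$ is unitary by hypothesis and its $(0,1)$-part agrees with $\bar\partial_E$ by construction, $\nabla$ is then the unique connection compatible with both the Hermitian metric and this holomorphic structure, i.e.\ the Chern connection. One may add that each weight bundle $E_\lambda$ is $\nabla$-parallel by Theorem \ref{thm:prequantization_structure}, hence $\bar\partial_E$-invariant and thus a holomorphic subbundle, so the weight decomposition and the $V$-action $A$ are compatible with the holomorphic structure.

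I expect the two substantive hypotheses to be those noted above: integrability of $T^{0,1}M$ --- the assumption that $J$ is a genuine complex structure rather than merely almost complex --- does the work in (i), while the vanishing of $(F^\nabla)^{0,2}$, an immediate consequence of the Lagrangian condition and the curvature identity, is the crux of (ii). The only mild subtlety is applying Koszul--Malgrange in the higher-rank, $V$-equivariant setting; but since the curvature acts fiberwise through the scalar $V$-action and $\nabla^{0,1}$ squares to zero, no new difficulty arises.
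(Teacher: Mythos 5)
Your proposal is correct and takes essentially the same route as the paper: Lemma \ref{lem:complex_lagrangian_eigenspace_decomposition} together with Newlander--Nirenberg gives (i), and for (ii) the key point is exactly the one the paper uses, namely that $F^\nabla=-A_\omega$ vanishes on the Lagrangian distribution $T^{0,1}M$, so that $\nabla^{0,1}$ defines a holomorphic structure. The only difference is one of explicitness --- the paper leaves the Koszul--Malgrange step and the identification of the unitary connection $\nabla$ as the Chern connection implicit, while you spell both out.
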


\begin{proof}
	\begin{enumerate}[i.]
		\item Lemma \ref{lem:complex_lagrangian_eigenspace_decomposition} implies that $T^{0,1}\!M$ is a Lagrangian distribution. Integrability follows by the Newlander-Nirenberg theorem.
		\item Since $\nabla^2=-A_\omega\in\Omega^2(M,\End\,E)$ vanishes when restricted to the Lagrangian distribution $T^{0,1}\!M$, it follows that $\nabla^{0,1}$ is a holomorphic structure on $E$.
	\end{enumerate}
\end{proof}

The $T^{0,1}M$-polarized sections of $\H$ are precisely the holomorphic sections of $(\H,\nabla)$. We will write $(M,\omega,J,G,\mu)$ to refer to a $V$-Hamiltonian system with $G$-invariant compatible complex structure, and we will denote $\H_{T^{0,1}M}$ by $\H_J$.

\begin{example}
	Every linear complex structure $J_U$ on a vector space $U$ determines a compatible linear complex structure $J = J_U\oplus(J_U^{-1})^*$ on the $V$-symplectic vector space $U\oplus\Hom(U,V)$. Since $J$ preserves the Lagrangian subspaces $U$ and $\Hom(U,V)$, it follows that $J$ is indefinite. Thus, every almost complex structure $J_Q$ on the smooth manifold $Q$ induces a compatible indefinite almost complex structure $J$ on $\Hom(TQ,V)$.
\end{example}

\begin{definition}\label{def:positive_definite}
	We will say that a linear complex structure $J$ on a $V$-symplectic vector space $(U,\omega)$ is \emph{definite} if the $V$-valued symmetric bilinear form $\langle u,u'\rangle=\omega(u,Ju')$ is definite.
\end{definition}

We apply this language in the natural way to almost complex $V$-symplectic manifolds $(M,\omega,J)$.

\begin{proposition}
	The linear complex structure $J$ is definite on $(U,\omega)$ if and only if the quadratic form $u\mapsto\omega(u,\bar{u})$ is definite on the $\pm i$-eigenspaces $U_\pm\subset U^\C$ of $J$.
\end{proposition}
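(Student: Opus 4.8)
The plan is to diagonalize $J$ over $\C$ and transfer the definiteness of $\langle\,\cdot\,,\cdot\,\rangle$ to the eigenspaces via a direct algebraic identity. First I would observe that, because $J$ is real, complex conjugation on $U^\C$ interchanges $U_+$ and $U_-$: if $Jw=\i w$ then $J\bar w=\overline{Jw}=-\i\bar w$. Hence every real vector decomposes uniquely as $u=u_++\overline{u_+}$ with $u_+\in U_+$, and the projection $u\mapsto u_+$ is an $\R$-linear isomorphism $U\xrightarrow{\sim}U_+$ (injectivity is clear, and $w\in U_+$ is the image of $w+\bar w\in U$).

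Next I would compute, extending $\omega$ and $J$ complex-bilinearly and writing $u_-=\overline{u_+}$ so that $Ju=\i u_+-\i u_-$,
\[
	\omega(u,Ju)=\i\,\omega(u_+,u_+)-\i\,\omega(u_+,u_-)+\i\,\omega(u_-,u_+)-\i\,\omega(u_-,u_-).
\]
By Lemma~\ref{lem:complex_lagrangian_eigenspace_decomposition} the spaces $U_\pm$ are Lagrangian, killing the first and last terms, and the antisymmetry of $\omega$ collapses the remaining two, giving
\[
	\langle u,u\rangle=\omega(u,Ju)=-2\i\,\omega\!\left(u_+,\overline{u_+}\right).
\]
Thus, under $u\mapsto u_+$, the form $\langle\,\cdot\,,\cdot\,\rangle$ on $U$ corresponds to $-2\i$ times the quadratic form $w\mapsto\omega(w,\bar w)$ on $U_+$.

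It then remains to verify that this correspondence respects definiteness. I would first note that $\omega(w,\bar w)$ is purely imaginary: since $\omega$ is real, $\overline{\omega(w,\bar w)}=\omega(\bar w,w)=-\omega(w,\bar w)$, so $\omega(w,\bar w)\in\i V$ and multiplication by $-2\i$ is an $\R$-linear isomorphism $\i V\xrightarrow{\sim}V$. Composing with the domain isomorphism $U\xrightarrow{\sim}U_+$, the form $\langle\,\cdot\,,\cdot\,\rangle$ is definite on $U$ exactly when $w\mapsto\omega(w,\bar w)$ is definite on $U_+$. Finally, conjugation restricts to an $\R$-linear isomorphism $U_-\xrightarrow{\sim}U_+$, and for $w\in U_-$ one has $\omega(w,\bar w)=-\omega(\bar w,w)$, so the form on $U_-$ is minus the pullback of the form on $U_+$ and is therefore definite precisely when the latter is; this handles both eigenspaces. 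The only genuine subtlety, and the main obstacle, is bookkeeping the notion of definiteness for the $V$-valued (resp.\ $\i V$-valued) form from Definition~\ref{def:positive_definite}: one must confirm that whatever cone- or functional-based definition is in force is invariant under real-linear isomorphisms of the domain and under the fixed rescaling $\i V\to V$. Granting this invariance, which holds for any reasonable definition, the equivalence follows and the computation itself is routine.
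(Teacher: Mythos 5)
Your proof is correct and takes essentially the same route as the paper's: both arguments exhibit a real-linear isomorphism between $U$ and $U_\pm$ (the paper parametrizes $U_\pm$ by $u\mapsto Ju\pm\i u$, while you invert this correspondence via the projection $u\mapsto u_\pm$) under which $\langle u,u\rangle=\omega(u,Ju)$ and $\omega(w,\bar w)$ agree up to the nonzero factor $\pm2\i$, so definiteness of one form transfers to the other. Your closing concern about the meaning of \emph{definite} is harmless: the paper's own proof reads definiteness of the $V$-valued form as nonvanishing on nonzero vectors, which is exactly the property your isomorphism preserves.
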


\begin{proof}
	If $J$ is definite, then for every $u\in U$,
	\[
		\omega(Ju+\i u,Ju-\i u) = -2\i\,\omega(u,Ju) \neq 0.
	\]
	The forward implication follows since every member of $U_\pm$ is of the form $Ju\pm \i u$. The reverse implication is similar.
\end{proof}

\begin{definition}
	Let $(E,\nabla,A)$ be a prequantum vector bundle on the $V$-symplectic manifold $(M^{2n},\omega)$. We define the \emph{adapted volume} of $(M,\omega)$ to be
	\[
		\vol(M,\omega,A) = \frac{1}{(2\pi)^n n!} \sum_{\lambda\in w(A)} \int_M\omega_\lambda^n.
	\]
\end{definition}

\begin{definition}
	We will say that the prequantum vector bundle $(E,\nabla,A)$ on $(M,\omega,J)$ is \emph{definite} if $J$ is definite with respect to $F^\nabla=\omega$, \emph{positive} if $\lambda\,\langle X,X \rangle = \omega_\lambda(X,JX)\geq 0$ for each $X\in TM$ and $\lambda\in w(A)$, and \emph{strongly positive} if it splits as the sum of positive line bundles $\oplus_{\lambda\in w(A)}(E_\lambda,\nabla|_{E_\lambda})$.
\end{definition}

Using these definitions, we extend a well-known result from the symplectic setting.

\begin{proposition}\label{prop:volume_state_space_growth_rate}
	If $(E,\nabla,A)$ is a strongly positive prequantum vector bundle on $(M,\omega,J)$, then
	\[
		\dim \H_J(M,k\omega) = \vol(M,\omega,A)\,k^n + O(k^{n-1})
	\]
	as $k\to\infty$, where $\H_J(M,k\omega)$ denotes holomorphic sections of the tensor bundle $E^{\otimes k}\to M$.
\end{proposition}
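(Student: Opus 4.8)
The plan is to reduce the statement to the classical asymptotics for a single positive line bundle and then to sum over the weights. First I would use the strong positivity hypothesis together with Theorem \ref{thm:prequantization_structure} to write $E=\bigoplus_{\lambda\in w(A)}E_\lambda$ as a $\nabla$-parallel, hence holomorphic, direct sum of positive line bundles with $F^{\nabla|_{E_\lambda}}=-\omega_\lambda$. Iterating the product decomposition of Lemma \ref{lem:tensor_weights} and restricting to the diagonal (exactly as in the corollary identifying $(M,k\omega)$ with the diagonal of the $k$-fold product), the $V$-linear tensor power decomposes as $E^{\otimes k}=\bigoplus_{\lambda\in w(A)}E_\lambda^{\otimes k}$, where each $E_\lambda^{\otimes k}$ is the ordinary $k$-th tensor power of the line bundle $E_\lambda$ and carries curvature $-k\omega_\lambda$. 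Because this splitting is by holomorphic subbundles, the holomorphic sections split accordingly, giving
\[
  \dim\H_J(M,k\omega)=\sum_{\lambda\in w(A)}\dim H^0(M,E_\lambda^{\otimes k}).
\]

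It then suffices to establish, for each fixed positive line bundle $E_\lambda$, the expansion $\dim H^0(M,E_\lambda^{\otimes k})=\frac{k^n}{n!}\int_M c_1(E_\lambda)^n+O(k^{n-1})$. Here I would assume $M$ compact, so that the spaces $H^0$ are finite-dimensional. Positivity of $E_\lambda$ makes $E_\lambda^{\otimes k}$ ample, so by Serre (equivalently Kodaira) vanishing the higher cohomology $H^q(M,E_\lambda^{\otimes k})$ vanishes for all $q>0$ once $k$ is large. Consequently $\dim H^0(M,E_\lambda^{\otimes k})$ agrees with the holomorphic Euler characteristic $\chi(M,E_\lambda^{\otimes k})$ for large $k$, and the Hirzebruch--Riemann--Roch theorem expresses the latter as a polynomial in $k$ of degree $n$ whose leading coefficient is $\frac{1}{n!}\int_M c_1(E_\lambda)^n$; all lower-order contributions of the Todd class are absorbed into the error term.

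Finally I would convert the topological leading term into the stated symplectic volume. A Chern--Weil computation gives $c_1(E_\lambda)=\frac{\i}{2\pi}F^{\nabla|_{E_\lambda}}=-\frac{\i}{2\pi}\omega_\lambda$; taking the $n$-th power and integrating identifies $\frac{1}{n!}\int_M c_1(E_\lambda)^n$ with the summand $\frac{1}{(2\pi)^n n!}\int_M\omega_\lambda^n$ of $\vol(M,\omega,A)$, where reality and sign are ensured by $w(A)\subset\i V^*$. Summing over the finitely many weights $\lambda\in w(A)$ and collecting the finitely many error terms yields $\dim\H_J(M,k\omega)=\vol(M,\omega,A)\,k^n+O(k^{n-1})$. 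The main obstacle is the second paragraph: one must invoke both compactness and positivity to guarantee the vanishing of higher cohomology, since it is this vanishing---not Hirzebruch--Riemann--Roch alone---that controls $\dim H^0$ rather than the Euler characteristic. A secondary point, which is the one computation I would carry out in full, is matching the precise powers of $\i$ and $2\pi$ in the Chern--Weil step against the definition of $\vol(M,\omega,A)$.
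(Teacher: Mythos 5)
Your proposal is correct and follows essentially the same route as the paper: split $E^{\otimes k}$ into the positive line bundles $E_\lambda^{\otimes k}$ via Theorem \ref{thm:prequantization_structure}, apply Riemann--Roch to extract the leading term $\vol(M,\omega,A)\,k^n$, and use Kodaira vanishing (from positivity of each $E_\lambda$) to identify the index with $\dim \H_J(M,k\omega)$ for large $k$. The only difference is bookkeeping --- the paper computes the index of $\partial+\bar\partial$ for the full bundle at once, whereas you apply Hirzebruch--Riemann--Roch weight by weight and sum, which is the same computation by additivity of the Chern character; your explicit attention to compactness and the Chern--Weil normalization is a reasonable supplement to the paper's terser argument.
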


\begin{proof}
	Since $E$ splits as the sum of line bundles $\oplus_{\lambda\in w(A)}E_\lambda$, the Riemann-Roch theorem implies that
	\begin{align*}
		\dim_{\Z_2} H^*(\partial+\bar\partial)
			&=	\frac{1}{(2\pi)^n}\int_M\mathrm{ch}\hspace{.5pt}E\wedge\mathrm{Td}\hspace{.5pt}M			\\
			&=	\frac{1}{(2\pi)^n}\sum_{\lambda\in w(A)}\int_M e^{k\omega_\lambda}\wedge\mathrm{Td}\hspace{.5pt}M	\\
			&=	\vol(M,k\omega,A) \;+\, O(k^{n-1}).
	\end{align*}
	Since $(E_\lambda,\nabla|_{E_\lambda})$ is positive for each $\lambda\in w(A)$, we have $\dim_{\Z_2} H^*(\partial+\bar\partial) = \dim \H_J(M,k\omega)$ for sufficiently large $k$ by an application of the Kodaira vanishing theorem \cite{Wells08}.
\end{proof}


\subsection{Quantization and Reduction}

We now turn to the interaction between quantization and $V$-Hamiltonian reduction. In particular, we will show that the original Guillemin-Sternberg conjecture does not extend to the polysymplectic setting.

If $(M,\omega,G,\mu)$ is a $V$-Hamiltonian system and $(E,\nabla,A)$ is a prequantum vector bundle on $(M,\omega)$, then there is an induced right action of $\g$ on $\mathcal{H}$, given by $\xi\mapsto Q_{\tilde\mu(\xi)}$. Diagrammatically, we have
\begin{center}
	\begin{tikzpicture}[scale=2.3]
		\node (ul) at (1,.8) {$C^\infty(M)$};
		\node (ur) at (2,.8) {$\End\,\mathcal{H}$};
		\node (ll) at (0,0) {$\g$};
		\node (lr) at (1,0) {$\X(M)$};
		
		\path[->,dashed] (ul) edge node[above] {$Q$} (ur);
		\path[->] (ul) edge node[left] {$X$} (lr);
		\path[->] (ur) edge node[below right] {$\sigma$} (lr);
		\path[->,dashed] (ll) edge node[above left] {$\tilde\mu$} (ul);
		\path[->] (ll) edge node[below] {$\lambda_*$} (lr);
	\end{tikzpicture}
\end{center}
where we note that $\sigma$, as defined in Subsection \ref{subsec:construction_of_prequantizations}, is only a partial map. We will assume that $Q_{\tilde\mu}$ determines a right action of $G$ on $\mathcal{H}$, as obtains, for example, when $G$ is compact and semisimple. We will also assume that the action of $G$ on $\mu^{-1}(0)$ is free and that the reduction $(M_0,\omega_0)$ is smooth.

In \cite{GuilleminSternberg82}, Guillemin and Sternberg established the following seminal result.

\begin{theorem}[K\"ahler Quantization Commutes with Reduction]\label{thm:GS_quantization_reduction}
	Let $(L,\nabla)$ be a positive prequantum line bundle on a K\"ahler manifold $(M,\omega,J)$, let $G$ be a compact connected Lie group acting on $(M,\omega,J)$ in a Hamiltonian fashion with moment map $\mu:M\to\g^*$, and let $\H_J(M)_G$ be the subspace of $G$-fixed members of $\H_J(M)$. Then,
	\begin{enumerate}[i.]
		\item The reduced space at $0\in\g^*$ is naturally a K\"ahler manifold $(M_0,\omega_0,J_0)$ with reduced prequantum line bundle $L_0$,
		\item There is a natural isomorphism $\H_J(M)_G \cong \H_{J_0}(M_0)$.
	\end{enumerate}
\end{theorem}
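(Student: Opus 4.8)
The plan is to reduce the statement to the standard Kempf-Ness correspondence between the symplectic quotient and the geometric invariant theory (GIT) quotient, and then to match $G$-invariant holomorphic sections on $M$ with holomorphic sections on the reduced space. For part (i), since $0$ is a regular value of $\mu$ and $G$ acts freely on $\mu^{-1}(0)$, the Marsden-Weinstein construction furnishes a smooth symplectic manifold $(M_0,\omega_0) = (\mu^{-1}(0)/G,\omega_0)$. To endow $M_0$ with a complex structure I would pass to the complexified group $G^\C$, which acts holomorphically on $M$ because $G$ preserves $J$, and invoke the Kempf-Ness theorem to identify $M_0\cong M^{\mathrm{ss}}/G^\C$, where the semistable locus $M^{\mathrm{ss}} = G^\C\cdot\mu^{-1}(0)$ is open and dense; this exhibits $(M_0,\omega_0)$ as a Kähler manifold $(M_0,\omega_0,J_0)$. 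The prequantum line bundle $L$ carries a $G$-equivariant lift compatible with $\nabla$, encoded by the comoment assignment $\xi\mapsto Q_{\tilde\mu(\xi)}$; since the fibrewise infinitesimal action over $\mu^{-1}(0)$ is governed by the value $\mu=0$, the lift acts by parallel transport along $G$-orbits, and the $G$-invariant connection descends. Hence $L|_{\mu^{-1}(0)}$ descends to a holomorphic line bundle $L_0\to M_0$ whose induced Chern connection has curvature $-\omega_0$, so $L_0$ is again positive.

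For part (ii) I would define the restriction map $r:\H_J(M)_G\to\H_{J_0}(M_0)$ by restricting a $G$-invariant holomorphic section to $\mu^{-1}(0)$ and descending to $M_0=\mu^{-1}(0)/G$, holomorphy being preserved under the identification $M_0\cong M^{\mathrm{ss}}/G^\C$. The decisive observation is that every $G$-invariant holomorphic section is automatically $G^\C$-invariant: for $\psi\in\H_J(M)_G$ the $L$-valued map $g\mapsto g^*\psi$ is holomorphic on $G^\C$ and constant on the real form $G$, hence constant by analytic continuation. Injectivity of $r$ follows immediately: if $\psi$ vanishes on $\mu^{-1}(0)$, then its $G^\C$-invariance forces it to vanish on the saturation $G^\C\cdot\mu^{-1}(0)=M^{\mathrm{ss}}$, which is open and dense, so $\psi\equiv 0$ by holomorphy.

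For surjectivity, given $s_0\in\H_{J_0}(M_0)$ I would pull it back along the quotient map $M^{\mathrm{ss}}\to M_0$ to obtain a $G^\C$-invariant holomorphic section $\tilde s$ on $M^{\mathrm{ss}}$ with $r(\tilde s)=s_0$, and then extend $\tilde s$ holomorphically across the unstable locus $M\setminus M^{\mathrm{ss}}$. This extension step is the main obstacle, and it is precisely where positivity of $L$ must be used: either through the codimension of the unstable stratum together with Hartogs's theorem, or through a growth bound on $\tilde s$ near the stratum obtained from the gradient flow of $-|\mu|^2$, which retracts $M^{\mathrm{ss}}$ onto $\mu^{-1}(0)$. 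Once the extension is secured, $\tilde s$ is $G$-invariant and holomorphic on all of $M$, whence $\tilde s\in\H_J(M)_G$ and $r(\tilde s)=s_0$, and combining this with the injectivity above yields the natural isomorphism $\H_J(M)_G\cong\H_{J_0}(M_0)$.
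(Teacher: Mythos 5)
First, a remark on the comparison itself: the paper contains no proof of this theorem --- it is quoted verbatim from Guillemin--Sternberg \cite{GuilleminSternberg82} ("In \cite{GuilleminSternberg82}, Guillemin and Sternberg established the following seminal result"). So your attempt can only be measured against the original argument, and in outline you have reconstructed it faithfully: the Kempf--Ness identification $M_0\cong M^{\mathrm{ss}}/G^\C$, the analytic-continuation argument upgrading $G$-invariance of holomorphic sections to $G^\C$-invariance, injectivity from vanishing on the open saturation, and surjectivity by pullback plus extension across the unstable locus. (Two hypotheses you use silently should be made explicit: $M$ compact, as in \cite{GuilleminSternberg82}, which is needed for the Kempf--Ness and Kirwan machinery, for completeness of the gradient flow of $-|\mu|^2$, and for the finite-dimensionality underlying your continuation argument; and the fact that the $G$-lift on $L$ extends to a holomorphic $G^\C$-lift, which requires a short integration argument and is not automatic.)

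The genuine gap is the extension step, which you correctly flag as the main obstacle but then leave unresolved, and both mechanisms you propose are defective as stated. Hartogs's theorem is unavailable in general: the unstable locus $M\setminus M^{\mathrm{ss}}$ can have complex codimension one even when $0$ is a regular value and the action on $\mu^{-1}(0)$ is free --- for the standard circle action by rotations on $S^2$ with moment map the height function, the unstable locus consists of the two poles, a divisor's worth of codimension, not two. And the retraction of $M^{\mathrm{ss}}$ onto $\mu^{-1}(0)$ by the flow of $-|\mu|^2$ does not by itself bound $\|\tilde s\|$: a $G^\C$-invariant section does not have $G^\C$-invariant pointwise norm, because only the compact group acts unitarily on $(L,\langle\,,\rangle)$. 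The missing lemma --- the analytic heart of \cite{GuilleminSternberg82}, and the only place positivity of $L$ actually enters part (ii) --- is the monotonicity identity $\L_{J\underline\xi}\log\|\tilde s\|^2 = -2\langle\mu,\xi\rangle$ (sign fixed by one's curvature convention), valid for $G^\C$-invariant holomorphic $\tilde s$. This shows $\|\tilde s\|$ is nonincreasing along the imaginary-time orbits leaving $\mu^{-1}(0)$, whence $\sup_{M^{\mathrm{ss}}}\|\tilde s\| = \sup_{\mu^{-1}(0)}\|\tilde s\| < \infty$, and a \emph{bounded} holomorphic section extends across the analytic subset $M\setminus M^{\mathrm{ss}}$ by the Riemann removable-singularity theorem, for which codimension one suffices once boundedness is known. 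Without this computation the surjectivity half of (ii) is unproven; with it, your outline closes up into essentially the original proof.
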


There was a significant amount of activity involved in generalizing this result, known as the \emph{Guillemin-Sternberg conjecture} or the \emph{$[Q,R]=0$ conjecture}, the latter notation expressing the commutation of quantization and reduction. It was first proved in a more general setting, in which $\H$ is defined to be the index space of a particular Dirac operator on the spin\textsuperscript{c} spinor bundle $\Lambda^{0,*} T^*M\otimes L$, by Vergne \cite{Vergne96a} in the case that $G$ is a torus, by Meinrenken \cite{Meinrenken96}, Paradan \cite{Paradan01}, and \cite{TianZhang98} in the general case, and by Meinrenken and Sjamaar \cite{MeinrenkenSjamaar99} for singular reduced spaces. The setting of these results agrees with K\"ahler quantization in the presence of a positive prequantum line bundle, so that these results extend those of Guillemin and Sternberg \cite{GuilleminSternberg82}. The $[Q,R]=0$ conjecture has also been proved in the context of spin\textsuperscript{c} quantization \cite{Paradan12,ParadanVergne17} and for certain presymplectic manifolds \cite{Cannas-da-SilvaKarshonTolman00,Hochs15}.

We will show that the Guillemin-Sternberg conjecture is false in the $V$-symplectic setting.

\begin{theorem}
	The presence of a positive definite prequantum vector bundle on $(M,\omega,J,G,\mu)$ does not imply that the reduced space $(M_0,\omega_0)$ inherits a complex structure $J_0$.
\end{theorem}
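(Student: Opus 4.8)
The plan is to refute the implication by constructing a single positive definite prequantum vector bundle whose reduced space is odd-dimensional, hence admits no almost complex structure at all. The organizing observation is a dimension count. If $G$ acts freely and $\mu\colon M\to\Hom(\g,V)$ is a submersion along $\mu^{-1}(0)$, then the level set has codimension $\dim\g\cdot\dim V$ and the quotient removes a further $\dim\g$, so
\[
	\dim M_0 = \dim M - \dim\g\,(\dim V + 1).
\]
In the symplectic case $\dim V = 1$ this is $\dim M - 2\dim\g$, always even, consistent with $M_0$ carrying the Kähler reduction $J_0$ of Theorem \ref{thm:GS_quantization_reduction}. But if $\dim V$ is even and $\dim\g$ is odd, then $\dim\g(\dim V+1)$ is odd, while the existence of a compatible definite $J$ forces $\dim M$ even; the reduced space then has odd dimension and no $J_0$ can exist.

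Concretely, I would take $V=\R^2$, $G = S^1$, and $M = \C^2$ with the product form $\omega = \bigl(\tfrac{\i}{2}\,\d z_1\wedge\d\bar z_1,\ \tfrac{\i}{2}\,\d z_2\wedge\d\bar z_2\bigr)$ and the standard complex structure $J$. One checks that $J$ is definite: for $\lambda=(1,1)\in V^*$ the form $\lambda\circ\omega$ is the standard symplectic form, so $\lambda\,\omega(X,JX) = |X|^2 > 0$. Since $\omega$ is exact, the trivial bundle $E=\C^2\times\C^2$ with $\nabla = \d + A_\alpha$ (where $\d\alpha=-\omega$) and a diagonal faithful action $A$ of $\R^2$ on $\C^2$ is a prequantum vector bundle; choosing $A$ so that $w(A)=2\pi\i\{e_1^*,e_2^*\}$ makes it positive, as $\omega_{e_j^*}(X,JX)=|X_j|^2\ge 0$, hence positive definite. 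Letting $S^1$ act diagonally by $t\cdot(z_1,z_2)=(e^{\i t}z_1,e^{\i t}z_2)$ preserves $\omega$ and $J$ and is $V$-Hamiltonian with $\mu(z)=\bigl(\tfrac12|z_1|^2-c_1,\ \tfrac12|z_2|^2-c_2\bigr)$ for fixed $c_1,c_2>0$. Then $\mu^{-1}(0)$ is the $2$-torus $\{|z_1|^2=2c_1,\,|z_2|^2=2c_2\}$, the diagonal $S^1$ acts freely, and $M_0=\mu^{-1}(0)/S^1\cong S^1$.

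Since $M_0\cong S^1$ is one-dimensional, it admits no almost complex structure, and in particular no $J_0$, which establishes the non-implication. The hard part is not any one calculation but arranging the two constraints at once: the hypothesis of a positive \emph{definite} bundle forces $\dim M$ even, while the conclusion needs the reduction to strip away an \emph{odd} number of dimensions. The product construction resolves this by using the second coefficient direction to make $\mu$ a submersion onto $\Hom(\g,V)\cong\R^2$, so the level set loses codimension $2$ rather than $1$, and the single quotient direction removes $3$ dimensions in all. I would close by observing that this is precisely the parity the symplectic case $\dim V=1$ forbids, explaining why Theorem \ref{thm:GS_quantization_reduction} has no naive polysymplectic analogue, and by noting that here $\omega_0$ is in fact degenerate, anticipating the sharper failure established below in which $(M_0,\omega_0)$ is genuinely $V$-symplectic.
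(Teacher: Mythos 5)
Your proposal is correct and takes essentially the same route as the paper: both rest on the $V=\R^2$ product construction with a diagonal $G$-action, so that the zero level set is a product of two level sets quotiented by only a single copy of $G$, leaving a reduced space diffeomorphic to $G$ itself (in your case $T^2/S^1\cong S^1$), which is odd-dimensional and therefore admits no almost complex structure. The paper runs this doubling argument abstractly — starting from any factor system whose reduction is a point and whose group admits no complex structure — and your $G=S^1$, $M=\C^2$ example is a concrete, fully verifiable instance of that same template.
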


\begin{proof}
	Let $(L,\nabla)$ be a prequantum line bundle on the Hamiltonian system $(M,\omega,J,G,\mu)$, and suppose that $0\in\g^*$ is a regular value of $\mu$, that $M_0=\mu^{-1}(0)/G$ is a point, and that $G$ does not admit a complex structure. For example, we may take rotations of the complex sphere. Define the $\R^2$-Hamiltonian system $(M^2,\tilde\omega,J,G,\tilde\mu)$ so that $J$ is the product complex structure,
	\begin{align*}
		\tilde\omega(X_1+X_2,Y_1+Y_2)	&=	\omega(X_1,Y_1)\oplus\omega(X_2,Y_2)\;\in\R^2,	\hspace{1cm}X_i,Y_i\in T_{x_i}M, \;x_i\in M,	\\
		\tilde\mu(x_1,x_2)				&=	\mu(x_1)\oplus\mu(x_2),
	\end{align*}
	and $G$ acts simultaneously on each factor of $M^2$. Note that $\tilde{L}=\pi_1^*L + \pi_2^*L\to M^2$ is a positive definite prequantum vector bundle on $(M^2,\tilde\omega,J)$, that $0\in\Hom(\g,\R^2)$ is a regular value of $\tilde\mu$, and that $\tilde\mu^{-1}(0) = \mu^{-1}(0)\times\mu^{-1}(0)$. We conclude that the reduced space $(M^2)_0\cong G$ does not admit a complex structure.
\end{proof}

\begin{theorem}
	Let $(E,\nabla,A)$ be a positive definite prequantum vector bundle on $(M,\omega,J,G,\mu)$ and suppose that $(M_0,\omega_0)$ is nonempty and $V$-symplectic, and inherits a complex structure $J_0$ and prequantum vector bundle $E_0$. It is not generally the case that $\H_J(M)_G\cong\H_{J_0}(M_0)$.
\end{theorem}

\begin{proof}
	Let $(M,\omega_i,J)_{i=1,2}$ be positive K\"ahler manifolds with prequantum line bundles $(L_i,\nabla^{L_i})$, each equipped with the structure of a Hamiltonian system $(M,\omega_i,G,\mu_i)$ for a fixed action of $G$ in such a way that $M_1\cap M_2\subset M_1$ is a proper holomorphic inclusion of complex manifolds, for $M_i=\mu_i^{-1}(0)/G\subset M/G$. This condition is achieved, for example, by obtaining $(M,\omega_2,J)$ from $(M,\omega_1,J)$ via a smooth isotopy of $M$ preserving $J$ and the action of $G$. In the extreme case, $M_1\cap M_2$ is a point and the intersection is transverse. Put $\omega = \omega_1\oplus\omega_2$, $\mu=\mu_1\oplus\mu_2$, and observe that $(L_1\oplus L_2,\nabla^{L_1}+\nabla^{L_2})$ is a strongly positive prequantum vector bundle on the $\R^2$-Hamiltonian system $(M,\omega,G,\mu)$ with reduced space at $0\in\Hom(\g,\R^2)$ given by $M_0=M_1\cap M_2$. In particular, $M_0$ inherits a complex structure $J_0$ and the reduced vector bundle $L_0$ is K\"ahler in each factor and thus strongly positive. From Theorem \ref{thm:GS_quantization_reduction}, we have $\H_J(M)_G\cong\H_J(M,\omega_1)_G\oplus\H_J(M,\omega_2)_G\cong \H_{J_1}(M_1)\oplus\H_{J_2}(M_2)$, so that $\dim \H_J(M)_G\geq \dim \H_{J_1}(M_1)$. Since $\dim M_1 > \dim M_0$, Theorem \ref{prop:volume_state_space_growth_rate} yields $\dim \H_J(M,k\omega)_G > \dim \H_{J_0}(M_0,k\omega_0)$ for $k>0$ sufficiently large. In particular, $\H_J(M,k\omega)\not\cong\H_{J_0}(M_0,k\omega_0)$.
\end{proof}


\section{Spin\textsuperscript{c} Quantization}\label{sec:spinc_quantization}

In this section, we observe that there is a natural definition of spin\textsuperscript{c} quantization in the local polysymplectic setting.


\subsection{Review of Spin\textsuperscript{c} Dirac Operators}

Let us briefly review spin\textsuperscript{c} quantization in the usual symplectic context. We refer to \cite[Appendix D]{LawsonMichelsohn89} for spin\textsuperscript{c} geometry and to \cite{BerlineGetzlerVergne92,LawsonMichelsohn89,Friedrich00} for Clifford modules and Dirac operators.

The \emph{Clifford algebra} $C\ell(U)$ of a vector space $U$ with positive-definite inner product $\langle\,,\rangle$ is the quotient of the tensor algebra $\mathbf{T}(U)$ by the ideal $I$ generated by the relation $u\otimes u = -\langle u,u\rangle$. The \emph{Clifford bundle} $C\ell(M) = C\ell(T^*M)$ of a Riemannian manifold $M$ is the bundle of Clifford algebras associated to the cotangent fibers $T^*M$.

The \emph{spin group} $\Spin(n)$ is the simply connected double cover of the special orthogonal group $\mathrm{SO}(n)=\mathrm{SO}(\R^n)$, which we identify as a subgroup of $C\ell(n)$ by means of the following two constructions. First, under the linear isomorphism $c\ell(n)^\times\cong C\ell(n)$, where $c\ell(n)^\times$ is the Lie algebra of the group of units $C\ell(n)^\times$, the development of the Lie subalgebra $\R^n\subset C\ell(n)$ yields a subgroup $\mathrm{Pin}(n)$ which acts on $\R^n$ by the adjoint action. Second, as the ideal $I$ contains only even elements of the $\Z$-graded tensor algebra $\mathbf{T}(\R^n)$, the quotient $C\ell(n)$ inherits a $\Z_2$-grading, $C\ell(n)^\pm$. The restriction of the adjoint action of $\mathrm{Pin}(n)$ on $\R^n$ realizes the subgroup $\mathrm{Pin}(n)\cap C\ell(n)^+$ as the connected double cover $\Spin(n)$ of $\mathrm{SO}(n)$.

The \emph{spin\textsuperscript{c} group} is defined to be
\[
	\Spinc(n) = \Spin(n)\times_{\Z_2}\mathrm{U}(1),
\]
where $\Z_2$ acts on $\Spin(n)$ by deck transformations of the coving map $\Spin(n)\to\mathrm{SO}(n)$, and on $\mathrm{U}(1)$ by $\pm 1$. 
A \emph{spin\textsuperscript{c} structure} on a Riemannian manifold $M^n$ is a $\Spinc(n)$-equivariant double cover $P_{\Spinc(n)}\to \mathrm{SO}(M)\times P_{\mathrm{U}(1)}$, where $\Spinc(M)$ is a $\Spinc(n)$-principal bundle, $\mathrm{SO}(M)$ is the $\mathrm{SO}(n)$-principal bundle of orthonormal cotangent frames, and $P_{\mathrm{U}(1)}$ is a $\mathrm{U}(1)$-principal bundle on $M$. We will assume that $P_{\mathrm{U}(1)}$ is equipped with a principal connection so that, in conjunction with the Levi-Civita connection on $M$, there is an induced connection on $P_{\Spinc(n)}$.

From this point forward we specialize to the case in which $n$ is even. The Clifford algebra $C\ell(n)=C\ell(\R^n)$ has a distinguished complex representation $c:C\ell(n)\to\End_\C S(n)$ satisfying $C\ell(n)\cong \End_\C S(n)$, known as \emph{Clifford multiplication}. Combining this with the action $e^{\i t}\cdot \sigma = e^{2\i t} \sigma$ of $\mathrm{U}(1)$ on $S(n)$, we obtain the \emph{spinor bundle}
\[
	S(M) = C\ell(M) \times_{\Spinc(n)} S(n),
\]
a bundle of Clifford modules on the Riemannian manifold $M^n$. A consequence of the inclusion $\Spin(n)\subset C\ell(n)$ is that $S(n)$ is naturally a representation of $\Spinc(n)$, called the \emph{spinor representation}. It can be shown that $S(n)$ splits as the sum of two subrepresentations $S(n)^\pm$, called the \emph{positive} and \emph{negative half-spinor representations}, and we likewise obtain the \emph{half-spinor bundles} $S(M)^\pm$ on $M$.
The \emph{Dirac operator} $D:\Gamma(S)\to\Gamma(S)$ on the spinor bundle $S=S(M)$ is defined to be the composition $c\circ\nabla^S$ of the connection $\nabla^S:\Gamma(S)\to\Gamma(T^*M\otimes S)$, induced by the principal connection on $\Spinc(M)$, and the Clifford multiplication $c:\Gamma(T^*M\otimes S)\to \Gamma(S)$. The operator $D$ splits into positive and negative components $D^\pm:\Gamma(S^\pm)\to\Gamma(S^\mp)$. When $M$ is compact, the Dirac operator has finite-dimensional kernel and we define the \emph{index space} to be the $\Z_2$-graded vector space
\[
	\H_D = \ker D^+ \ominus \ker D^-.
\]


\subsection{Application to Quantization}

Consider a prequantum vector bundle $(E,\nabla,A)$ on a Riemannian local $V$-symplectic manifold $(M^{2n},\omega)$, and suppose that $P_{\Spinc(n)}\to\mathrm{SO}(M)\times\mathrm{U}(\det^2 E)$ is a spin\textsuperscript{c} structure on $M$, where $\mathrm{U}(\det^2 E)$ denotes the bundle of unitary frames on the square of the determinant bundle $(\det E)^2$. As above, the connection $\nabla$ on $E$ and the Levi-Civita connection on $M$ together yield a principal connection on $P_{\Spinc(n)}$, and thus a Dirac operator $D$ on a spinor bundle $S$ associated to $P_{\Spinc(n)}$.

In this situation, we make the following definition.

\begin{definition}
	The \emph{spin\textsuperscript{c} quantization} of $(M,\omega)$ with respect to $(E,\nabla,A)$ is defined to be the index space $\H_D=\ker D^+ \ominus\ker D^-$.
\end{definition}

If $(M,\omega)$ is globally $V$-symplectic and $E=\oplus_{\lambda\in w(A)} E_\lambda$ splits as the sum of line bundles, then $\det E \cong \otimes_{\lambda\in w(A)} E_\lambda$. In particular, when $(M,\omega)$ is a symplectic manifold and $(E,\nabla)$ is a prequantum line bundle, this definition coincides with the usual spin\textsuperscript{c} quantization \cite{Fuchs09,Hochs08}.

\begin{remark}
	In the symplectic setting, the spin\textsuperscript{c} structure employed in the results of Vergne \cite{Vergne96a}, Meinrenken \cite{Meinrenken96}, Tian-Zhang \cite{TianZhang98}, and Paradan \cite{Paradan01}, described following Theorem \ref{thm:GS_quantization_reduction}, is typically distinct to that of spin\textsuperscript{c} quantization. While the determinant line bundle of the spin\textsuperscript{c} spinor bundle $\Lambda^{0,*}T^*M\otimes L$ is isomorphic to $\kappa^*\otimes L^2$ \cite[Proposition D.50]{GuilleminGinzburgKarshon02}, where $\kappa^*$ is the anticanonical line bundle associated to the underlying almost complex structure $J$, the corresponding determinant line bundle is isomorphic to $L^2$ in the context of spin\textsuperscript{c} quantization. Indeed, the $[Q,R]=0$ conjecture constitutes a distinct question in each setting.

\end{remark}

We conclude this section by noting that the polysymplectic spin\textsuperscript{c} $[Q,R]=0$ conjecture remains open.


\section{Outlook}\label{sec:outlook}

We consider three directions in which the material may be developed or applied.

\begin{enumerate}[1.]
	\item \textbf{Chern-Simons theory}. Let $M$ be a smooth manifold of dimension at least $3$ and let $P$ be a $G$-principle bundle on $M$. In an earlier work \cite{Blacker19}, we show that the regular part of the moduli space of flat connections $\mathcal{M}(P)$ possesses a natural $H^2(M)$-valued presymplectic form $\omega_{\mathcal{M}(P)}$, obtained as the reduced $2$-form of a canonical $\Omega^2(M)/B^2(M)$-symplectic structure $\omega_{\mathcal{A}(P)}$ on the space of connections $\mathcal{A}(P)$. This generalizes the situation in which $M$ is a surface and $\omega_{\mathcal{M}(P)}$ is a symplectic structure. In the surface case, the prequantum line bundle on $(\M,\omega_{\mathcal{M}})$ constitutes the \emph{Chern-Simons line bundle}. The associated quantization theory has been the subject of much independent interest \cite{RamadasSingerWeitsman89,Krepski08,JeffreyWeitsman92,Charles16,ScheinostSchottenloher95}. We refer to \cite{Freed09,Witten89} for background on Chern-Simons theory more generally. It would be interesting to investigate the corresponding \emph{Chern-Simons vector bundle} on the moduli space $\mathcal{M}(P)$ in the case of a higher dimensional base space $M$, and to compare this approach with similar work in this direction \cite{Perez18,LopezPerez19}.

	\item \textbf{Multisymplectic geometry}. There is active interest in clarifying the status of symplectic constructions in the multisymplectic setting, particularly in respect to the notion of the moment map and reduction \cite{Echeverri-a-Enri-quezMunoz-LecandaRoman-Roy18,Herman18,Herman18a,RyvkinWurzbacherZambon18,RyvkinWurzbacher15}.
			
		See \cite{CantrijnIbortLeon99,RyvkinWurzbacher19} for general background on multisymplectic geometry. Methods of multisymplectic quantization have recently appeared by Rogers \cite{Rogers11,Rogers12,Rogers13}, Serajelahi \cite{Serajelahi15}, Barron and Seralejahi \cite{BarronSerajelahi17}, and others \cite{DeBellisSamannSzabo10,DeBellisSamannSzabo11,SamannSzabo12}. It is possible that there is an approach to multisymplectic quantization which resembles our method in Section \ref{sec:prequantization}. The question likewise stands for the more general construction of \emph{higher Dirac structures} \cite{BursztynMartinezAlbaRubio19}.

	\item \textbf{Quantum Field Theory}. Polysymplectic geometry is a natural framework for classical field theory \cite{Kanatchikov98,GiachettaMangiarottiSardanashvily99,RomanRoyNarciso09,Roman-RoyReySalgado11}, and in this context various approaches to quantization have been effected \cite{Kanatchikov04,Kanatchikov98a,Kanatchikov01,Kanatchikov15,Bashkirov04,Sardanashvily02,BashkirovSardanashvily04}. In the symplectic setting, following a \emph{metaplectic correction}, the predictions of polarized quantization are expected to conform with experimental results \cite{Woodhouse92}. It would be interesting to determine whether there exists an analogous modification by which our quantization formalism may be brought to describe actual physics systems.
\end{enumerate}

\bibliography{quantization}
\bibliographystyle{abbrv}

\par
\medskip
\begin{tabular}{@{}l@{}}
	\textsc{Department of Mathematics, East China Normal University,} \\
	\textsc{500 Dongchuan Road, Shanghai, 200241 P.R.\ China} \\[1.5pt]
	\textit{E-mail address}: \texttt{cblacker@math.ecnu.edu.cn}
\end{tabular}

\end{document}